\documentclass{article}
\usepackage{amsmath,amssymb,amsthm,thmtools,enumitem,bm,graphicx,xcolor,microtype}
\usepackage[T1]{fontenc}
\usepackage{tikz}
\usetikzlibrary{calc}

\usepackage[style = ext-numeric, sorting = nyt, giveninits = false, maxbibnames = 8, minbibnames = 3, date = year, isbn = false, doi = true, eprint = true, url = true, backref = true]{biblatex}
\renewbibmacro{in:}{}
\DeclareFieldFormat{titlecase}{\MakeSentenceCase*{#1}}
\DeclareFieldFormat{titlecase:journaltitle}{{#1}}

\usepackage[unicode]{hyperref}

\theoremstyle{definition}
\newtheorem{dfn}{Definition}[section]

\newtheorem{ex}[dfn]{Example}
\newtheorem{case}{Case}
\newtheorem{cnd}{Condition}

\theoremstyle{remark}
\newtheorem{rmk}[dfn]{Remark}

\theoremstyle{plain}
\newtheorem{thm}[dfn]{Theorem}
\newtheorem*{thm*}{Theorem}
\newtheorem{prop}[dfn]{Proposition}
\newtheorem{lem}[dfn]{Lemma}
\newtheorem{cor}[dfn]{Corollary}

\newtheorem{clm}{Claim}

\renewcommand{\qedsymbol}{$\blacksquare$}

\declaretheoremstyle[headfont = \normalfont\itshape, qed = $\blacksquare_{\mathrm{Claim}}$]{prooc}

\def\sectionautorefname~#1\null{Chapter~#1\null}
\def\subsectionautorefname~#1\null{Section~#1\null}

\title{Distality in ordered abelian groups}
\author{Koki Okura\thanks{Doctoral Program in Mathematics, University of Tsukuba, Ibaraki 305-8571, Japan\\E-mail: \href{\detokenize{mailto:k_okura@math.tsukuba.ac.jp}}{\detokenize{k_okura@math.tsukuba.ac.jp}}}}
\date{}

\begin{document}

\maketitle

\begin{abstract}
    We provide a characterization of distal ordered abelian groups:
    An ordered abelian group is distal if and only if, for each prime number $p$, the sizes of ribs with respect to the ``valuation'' $\mathfrak{s}_p$ are uniformly bounded.
    This generalizes the distality criterion for ordered abelian groups with finite spines given by Aschenbrenner, Chernikov, Gehret, and Ziegler.
    Furthermore, we prove that every ordered abelian group has a distal expansion, affirmatively answering a conjecture posed by the same authors.
\end{abstract}

\tableofcontents

\addcontentsline{toc}{section}{Introduction}
\subsection*{Introduction}

The study of NIP theories is one of the central themes in recent model theory.
This class encompasses not only all stable theories but also many important unstable theories, such as the real field, the $p$-adic fields, and algebraically closed valued fields.
Heuristically, NIP structures are often viewed as being constructed from stable parts and linear orders.
For example, an algebraically closed valued field is built from its residue field (which is algebraically closed and hence stable) and its value group (which is a divisible ordered abelian group).
In this view, stable theories represent the case where the linear order part is trivial.

Simon \cite{Simon2013DisNon} introduced \emph{distal theories} to capture the opposite: NIP theories whose stable part is trivial.
Examples of distal structures include the real field and the $p$-adic fields, whereas algebraically closed valued fields are not distal due to their nontrivial stable residue field.
Beyond their role in classification theory, distal theories possess remarkable combinatorial properties, such as strong Erd\H{o}s-Hajnal property and upper bounds for Zarankiewicz's problem \cite{Chernikov2018RegLem,Chernikov2020CutLem}, the latter of which is applicable to theories that merely have distal expansions.

In the context of valued fields, many results demonstrate that properties of henselian valued fields are controlled by those of their value groups and residue fields.
A classic example is Delon's one \cite{Delon1978TypSur} that a henselian valued field of equicharacteristic $0$ has NIP if and only if both its value group and residue field have NIP.
Such transfer results are called Ax-Kochen-Ershov (AKE) principle.
More recently, Aschenbrenner, Chernikov, Gehret, and Ziegler \cite{Aschenbrenner2022DisVal} established AKE principle on distality:
\begin{quote}
    Let $K$ be a henselian valued field with a value group $\Gamma$ and a residue field $\bm{k}$.
    Then $K$ is distal (has a distal expansion) in the language of valued fields if and only if $K$ is finitely ramified, $\Gamma$ is distal (has a distal expansion) in the language of ordered abelian group, and $\bm{k}$ is distal (has a distal expansion) in the language of fields.
\end{quote}
These AKE-style results reduce the study of henselian valued fields to the study of ordered abelian groups and pure fields, from which a great motivation for model-theoretic study of ordered abelian groups arises.

Model theory of ordered abelian groups began in the 1960s.
Following several early studies, quantifier elimination for general ordered abelian groups was established by Gurevich \cite{Gurevich1968DecPro} and Schmitt \cite{Schmitt1982ModThe}.
They subsequently used these results to prove that every ordered abelian group has NIP \cite{Gurevich1984TheOrd}.
Regrettably, their quantifier elimination results were contained in unpublished theses and remained largely overlooked for many years.
Later, Cluckers and Halupczok \cite{Cluckers2011QuaEli} independently developed quantifier elimination in a modern and simplified form using many-sorted languages, which is convenient for contemporary research.

Building on the work of Cluckers and Halupczok, Aschenbrenner et al. (ibid.) provided a partial characterization of distality for ordered abelian groups.
For an integer $n \geq 2$, an ordered abelian group $G$, and $a \in G \setminus nG$, let $\mathfrak{s}_n(a)$ be the largest convex subgroup $H$ such that $a \notin H + nG$.
If $a \in nG$, we let $\mathfrak{s}_n(a) = \emptyset$ (this convention differs from the original one by Cluckers and Halupczok).
We also define $\mathcal{S}_n = \left\{ \mathfrak{s}_n(a) \mid a \in G \right\}$.
We say that $G$ has \emph{finite spines} if $\mathcal{S}_p$ is finite for every prime number $p$.
Aschenbrenner et al. \cite[Theorem 3.13]{Aschenbrenner2022DisVal} proved that:
\begin{quote}
    An ordered abelian group $G$ with finite spines is distal if and only if $G / pG$ is finite for every prime number $p$.
\end{quote}

In this paper, we establish a criterion for distality that applies to \emph{all} ordered abelian groups, removing the assumption of finite spines.
For each $H \in \mathcal{S}_p \setminus \left\{ \emptyset \right\}$, let $G_{\leq H}^{\mathfrak{s}_p} = \left\{ a \in G \mid \mathfrak{s}_p(a) \subseteq H \right\}$ and $G_{< H}^{\mathfrak{s}_p} = \left\{ a \in G \mid \mathfrak{s}_p(a) \subsetneq H \right\}$.
These are subgroups of $G$ as we shall see later.
We then define the \emph{rib} at $H$ with respect to $\mathfrak{s}_p$, denoted by $R_{H}^{\mathfrak{s}_p}$, as the quotient $G_{\leq H}^{\mathfrak{s}_p} / G_{< H}^{\mathfrak{s}_p}$.
Our first main result is the following:

\begin{thm*}
    An ordered abelian group $G$ is distal if and only if for each prime number $p$, there exists a positive integer $N_p$ such that for every $H \in \mathcal{S}_p \setminus \left\{ \emptyset \right\}$, $|R_{H}^{\mathfrak{s}_p}| < N_p$.
\end{thm*}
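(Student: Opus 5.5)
We will prove the two directions separately, working throughout in the many-sorted language of Cluckers and Halupczok. Its relative quantifier elimination reduces definable sets in $G$ to boolean combinations of conditions of two kinds: order and congruence conditions of the form $t(x) \equiv_{m}^{[\alpha]} k$ (or $t(x) \bowtie_{\alpha} k$) relative to auxiliary sorts, and conditions on the auxiliary sorts $\mathcal{S}_n$, $\mathcal{T}_n$, $\mathcal{T}_n^+$, which are coloured linear orders. A distal theory is one in which every indiscernible sequence is distal. We therefore test distality on indiscernible sequences $(a_i)_{i\in I}$ with $I = I_1 + (c) + I_2$, where $I_1$ and $I_2$ have no endpoints: we must show that whenever $I_1 + I_2$ is indiscernible over a finite tuple $b$ and $I_1 + (c) + I_2$ is indiscernible, the whole sequence $I_1 + (c) + I_2$ is indiscernible over $b$. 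First we record some basic facts. $G^{\mathfrak{s}_p}_{\le H}$ and $G^{\mathfrak{s}_p}_{<H}$ are subgroups containing $pG$. The ribs $R^{\mathfrak{s}_p}_H$ are $\mathbb{F}_p$-vector spaces. For $n = p^k$, the map $\mathfrak{s}_n$ is controlled by $\mathfrak{s}_p$ through the filtration by $p^jG$. This lets us state the hypothesis as a uniform bound on $\dim_{\mathbb{F}_p} R_H^{\mathfrak{s}_p}$.

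For the direction (distal $\Rightarrow$ bounded ribs), we argue by contraposition. Suppose that for some $p$ the ribs $R^{\mathfrak{s}_p}_H$ have unbounded finite size, or that some rib is infinite. By compactness we pass to an elementary extension $G^*$ and $H \in \mathcal{S}_p$ with $R_H^{\mathfrak{s}_p}$ infinite. The rib is then an infinite $\mathbb{F}_p$-vector space interpreted in $G^*$ via a definable equivalence relation: $a \sim b$ iff $\mathfrak{s}_p(a-b)\subsetneq H$, on the definable set $\{a : \mathfrak{s}_p(a)\subseteq H\}$, where $H$ is a parameter from the sort $\mathcal{S}_p$. The induced structure on this rib should be just a pure vector space, i.e. stable. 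Take $(a_i)$ indiscernible with pairwise distinct nonzero classes in the rib, forming a linearly independent family. Put $b = a_{c}' $, a realization of the class of $a_c$ shifted suitably; the concrete plan is to use $b$ with $b - a_c \in G^{\mathfrak{s}_p}_{<H}$ and $b \ne a_c$. Then the formula "$\mathfrak{s}_p(x - y) \subsetneq H$" holds for $x = a_c$, $y = b$ and fails for all other $a_i$. This breaks distality in the standard way for an infinite indiscernible set inside a stable quotient. It is the direct analogue of the argument in \cite{Aschenbrenner2022DisVal} for $G/pG$ infinite.

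For the converse, assume the uniform bounds $N_p$. The bounded ribs mean that, for each $n$, the sort structure on $\mathcal{S}_n$ together with the rib data can be coded by finitely many unary colours of the linear order $\mathcal{S}_n$. Consequently the auxiliary sorts are, up to interdefinability, coloured linear orders with finitely many colours, which are distal. The main obstacle is the main-sort step. For an atomic formula $\varphi(x;b)$ of the form $x - t(b) \in H + nG$ with $H = \mathfrak{s}_n(\dots)$, we must show that, along an indiscernible sequence, its truth value cannot flip at a single inserted point $c$. We will decompose $a_i - t(b)$ according to its $\mathfrak{s}_p$-level. Because each rib is finite, an indiscernible sequence whose elements lie in one rib with pairwise distinct classes must be short. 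Hence the $(a_i)$ are eventually constant modulo $G^{\mathfrak{s}_p}_{<H}$. The congruence condition is then determined by an order-type/cut condition on the spine $\mathcal{S}_p$, which is distal. We expect to carry this out by induction on the complexity of the quantifier-free formula, using the known criterion that it suffices to check distality for formulas in one variable $x$ over the main sort and the auxiliary sorts separately. The delicate case is when the relevant $H$ depends on $b$ and varies with $i$; there we will use indiscernibility to show that $\mathfrak{s}_p(a_i - t(b))$ is monotone or constant in $i$, and then conclude from distality of the coloured order.
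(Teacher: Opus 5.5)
\textbf{The non-distality direction has a genuine gap.} That direction rests entirely on showing that $(a_i)_{i\in I+J}$ is indiscernible over $b$, and you never prove this.

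The condition $b-a_c\in G^{\mathfrak{s}_p}_{<H}$, $b\neq a_c$, does not pin $b$ down, and many such $b$ fail. If $b$ lies in the convex hull of the sequence (e.g.\ $b=a_c+pg$ with $g$ small), then $x<b$ already separates $I$ from $J$. Nothing in your choice controls $\mathfrak{t}_q(a_i-b)$, the predicates $x=_{\bullet}k_{\bullet}$, or the higher data $\mathfrak{s}_{p^s}$, $x\equiv_{p^s,\bullet}k_{\bullet}$, $D^{[p^t]}_{p^s}$ for $s\geq 2$. All of these enter the type of $a_i$ over $b$ through the relative quantifier elimination.

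Your fallback, that the induced structure on the rib ``should be'' a pure vector space, is not proved. It is also not literally true: the rib carries the definable filtration $(H^{[p^s]}+pG)/(H+pG)$ and, when $G/H$ is discrete, the class of $1_H$. The imaginary version of your idea (take the classes $e_i$ of the $a_i$, put $b=e_c$, and use that $(e_i)$ is an indiscernible set) would work only after proving that the full induced structure on the rib is stable. That requires essentially the quantifier-elimination analysis the paper carries out.

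\textbf{What the paper does instead.} It sets $r=\sup\{s\geq 1 : |(H^{[p^s]}+pG)/(H+pG)|=\infty\}$. It takes an indiscernible $(a_i)_{i\in\mathbb{Q}+(\infty)}$ that is independent in $R^{\mathfrak{s}_{p^s}}_H$ for every $s\leq r$; by Lemma~\ref{lem:TotalIndis} this makes it totally indiscernible for the $p^s$-data with $s\leq r$. The sequence also grows fast and is normalized with respect to the $\mathfrak{t}_q$.

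It then builds $a'_\infty$ with the following properties:
\begin{itemize}
\item $a'_\infty$ lies above all $na_i$;
\item $a'_\infty\equiv_m a_\infty$ for $m$ prime to $p$, and $\mathfrak{t}_q(a'_\infty)=\mathfrak{t}_q(a_\infty)$;
\item $a'_\infty$ is congruent to $a_0$ modulo powers of $p$.
\end{itemize}
Proposition~\ref{prop:FunctionRelativeQE} then gives $\mathrm{tp}(a'_\infty/(a_i)_{i\neq 0})=\mathrm{tp}(a_\infty/(a_i)_{i\neq 0})$.

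When $r<\infty$ one cannot simply take $a'_\infty\equiv_{p^s}a_0$ for all $s$. For $s>r$ the values $\mathfrak{s}_{p^s}(a_j-a_i)$ lie strictly above $H$ and need not be constant, so $\mathfrak{s}_{p^s}(a_i-a'_\infty)$ could distinguish $i<0$ from $i>0$. The paper therefore corrects $a_0$ to $a_0-p^rc$, with $(a_0-p^rc)-a_\infty\in\widetilde{H}$ for a suitable $\widetilde{H}>H$. None of this appears in your sketch.

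\textbf{The distality direction.} Your sketch follows the paper's outline: valuations along the sequence are monotone or constant, and finiteness of ribs rules out a nonzero constant value. It still leaves out needed pieces:
\begin{itemize}
\item The bound must be transferred from $\mathfrak{s}_p$ to $\mathfrak{s}_{p^r}$ (Lemma~\ref{lem:RprFinite}).
\item The $\mathfrak{t}_p$ and $=_{\bullet}k_{\bullet}$ conditions, whose ribs can be infinite, must be handled using the order on $H^+/H$.
\item Equality of types cannot be checked atomic formula by atomic formula. All auxiliary-sort values must be treated jointly: split them into those depending only on the sequence and those depending only on $b$, and apply distality of the auxiliary sorts to these infinite tuples.
\end{itemize}
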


In other words, $G$ is distal if and only if for every prime number $p$, the sizes of ribs with respect to $\mathfrak{s}_p$ are uniformly bounded.
For any $H \in \mathcal{S}_p \setminus \left\{ \emptyset \right\}$, the rib $R_{H}^{\mathfrak{s}_p}$ can be viewed as an $\mathbb{F}_p$-vector space, as every element is annihilated by $p$.
If the sizes of ribs are not uniformly bounded for some $p$, then there exists $H \in \mathcal{S}_p \setminus \left\{ \emptyset \right\}$ in an elementary extension of $G$ such that $|R_{H}^{\mathfrak{s}_p}| = \infty$.
Our theorem can be interpreted as stating that the existence of such an infinite $\mathbb{F}_p$-vector space induces stable behavior, which is the sole source of non-distality in an ordered abelian group.A

Note that for any $H_1 \subsetneq H_2$ in $\mathcal{S}_p \setminus \left\{ \emptyset \right\}$, we have $pG \subseteq G_{< H_1}^{\mathfrak{s}_p} \subsetneq G_{\leq H_1}^{\mathfrak{s}_p} \subseteq G_{< H_2}^{\mathfrak{s}_p} \subsetneq G_{\leq H_2}^{\mathfrak{s}_p} \subseteq G$.
Thus, if $\mathcal{S}_p$ is infinite, then so is $G / pG$.
Conversely, suppose that $\mathcal{S}_p$ is finite and listed as $\emptyset \subsetneq H_1 \subsetneq \dots \subsetneq H_m$.
In this case, $pG = G_{< H_1}^{\mathfrak{s}_p}$, $G_{\leq H_i}^{\mathfrak{s}_p} = G_{< H_{i+1}}^{\mathfrak{s}_p}$ for every $i < m$, and $G_{\leq H_m}^{\mathfrak{s}_p} = G$.
Consequently, we have $|G / pG| = \prod_{i=1}^m |R_{H_i}^{\mathfrak{s}_p}|$.
Therefore, the partial characterization by Aschenbrenner et al. is a special case of our general result.

As for the existence of distal expansions, Aschenbrenner et al. \cite[Conjecture 3.16]{Aschenbrenner2022DisVal} conjectured that every ordered abelian group admits a distal expansion.
We affirmatively answer this conjecture.

\begin{thm*}
    Every ordered abelian group has a distal expansion.
\end{thm*}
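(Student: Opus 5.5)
The plan is to build the expansion so that every rib becomes small, and then invoke the distality criterion (the first main theorem) in a suitably expanded language. Since that criterion is stated for pure ordered abelian groups, I first need a version of the Cluckers--Halupczok quantifier elimination that tolerates extra structure living only on the auxiliary sorts. So the first step is to reformulate distality through the many-sorted structure: $G$ together with the spine sorts $\mathcal{S}_n$, $\mathcal{T}_n$, $\mathcal{T}_n^+$ and the quotient maps. By relative quantifier elimination, $G$ is distal as soon as the auxiliary sorts, with their induced structure, form a distal structure and the main sort is distal relative to them.

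The source of non-distality, as in the proof of the first theorem, is that each rib $R_H^{\mathfrak{s}_p}$ is an $\mathbb{F}_p$-vector space, on which the induced structure is essentially pure. This is stable, and it is nontrivial when the dimension is unbounded. To kill it, I would expand by naming, for each prime $p$ and uniformly in $H\in\mathcal{S}_p\setminus\{\emptyset\}$, a linear order on the rib.

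Concretely, I would choose a family of coset representatives, or a lexicographic structure, so that each $R_H^{\mathfrak{s}_p}$ becomes an ordered $\mathbb{F}_p$-vector space. One option is to choose a basis indexed by an ordered set and take the lexicographic order of coordinates. A simpler option is any linear order compatible with a valuation-like filtration. Either way, the order is encoded as a relation on $G\times G$ that depends only on classes modulo $G^{\mathfrak{s}_p}_{<H}$. An infinite $\mathbb{F}_p$-vector space carrying a generic ordering built from a dense order on a basis is distal, since it is interpretable in a dense linear order together with finite data. I would therefore take the rib structure to be the vector space $\bigoplus_{I}\mathbb{F}_p$ with $I$ a dense linear order, and lex order on supports. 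By Remark/standard facts this is a distal structure.

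The last step is to verify that the expanded theory still admits quantifier elimination relative to the auxiliary sorts, now enriched by these ordered ribs. I would then rerun the proof of the distality criterion, replacing ``$|R_H^{\mathfrak{s}_p}|<N_p$'' by ``the rib structure is distal, uniformly in $H$.'' In the original proof, finiteness of ribs was used only to show that indiscernible sequences in a rib are eventually constant on the relevant cosets. In the ordered rib, distality of the ordered vector space gives the needed rigidity instead. I expect the main obstacle to be this relative quantifier elimination: the ribs for different $p$ and different $H$ interact through the group, so the orders must be chosen coherently across the chain of subgroups $G^{\mathfrak{s}_p}_{<H}\subseteq G^{\mathfrak{s}_p}_{\le H}$. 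My first attempt would be to pass to a sufficiently saturated model and build the orders by transfinite recursion along $\mathcal{S}_p$, checking via a back-and-forth argument that partial isomorphisms preserving the new orders extend.
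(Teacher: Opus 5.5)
Your heuristic is the right one, and it is also the paper's: kill the stable $\mathbb{F}_p$-vector-space part of the ribs by adding an order or valuation. But there is a genuine gap. The proposal puts the new structure in the wrong place, and it leaves the actual work, the relative quantifier elimination, unspecified.

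\textbf{The higher ribs and the filtration are not addressed.} You treat $R_H^{\mathfrak{s}_p}$ as an essentially pure vector space and order only it. The language sees more than that.
\begin{itemize}
\item It sees the chain of subspaces $(H^{[p^k]}+pG)/(H+pG)$, through $D_{p^r}^{[p^s]}$ and the dimension predicates.
\item It sees the higher ribs $R_H^{\mathfrak{s}_{p^r}}=H^{[p^r]}/(H+p^rG)$. These are $\mathbb{Z}/p^r\mathbb{Z}$-modules containing a copy of $R_H^{\mathfrak{s}_p}$ via $x\mapsto p^{r-1}x$, since $a\in H+pG\iff p^{r-1}a\in H+p^rG$. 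So they are infinite whenever $R_H^{\mathfrak{s}_p}$ is.
\end{itemize}
In the proof of the distality criterion, the constant-value case (Case 3) must collapse for every $v=\mathfrak{s}_{p^r}$, not only $r=1$. That is why the bound on $\mathfrak{s}_p$-ribs is first upgraded to all $\mathfrak{s}_{p^r}$-ribs. Your proposal says nothing about these ribs.

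What is needed is a coherent family of refinements $\sigma_{p^r}$ of all the $\mathfrak{s}_{p^r}$. Each must have ribs of size $p$. They must also be adapted to the filtration, so that the subgroups $H^{[p^s]}+p^rG$ are balls; otherwise the $D$-predicates are uncontrolled. An arbitrary basis of $R_H^{\mathfrak{s}_p}$ indexed by a dense order gives neither property.

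The paper's construction supplies both:
\begin{itemize}
\item It chooses bases layer by layer: $\overline a_k$ for $(H^{[p^k]}+pG)/(H^{[p^{k+1}]}+pG)$ with $k<N$, and $\overline a_N$ for $(H^{[p^N]}+pG)/(H+pG)$.
\item Each index set has a maximum, and $1_H$ gets the least value when $G/H$ is discrete.
\item Elements of $R_H^{\mathfrak{s}_{p^r}}$ are written uniquely as $\sum p^s\overline m_{k,s}\cdot\overline a_k$, and the value is the leading term in a diagonal order on $\mathbb{N}\times\Sigma_p$. This gives $\sigma_{p^{r+1}}(pa)=(\sigma^{\mathbb{N}}_{p^r}(a)+1,\sigma^{\Sigma}_{p^r}(a))$.
\end{itemize}
This works only to a finite depth $N$. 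So the paper axiomatizes it (good values with relations $\simeq_z$) and realizes it by compactness in an elementary extension. A recursion ``along $\mathcal{S}_p$'' misses that the difficulty lies along the $[p^k]$-filtration inside a single $H$. Coherence across different $H$ is automatic, since they are simply stacked in the order of $\mathcal{S}_p$.

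\textbf{The relative quantifier elimination is where the proof lives.} You rightly flag it as the main obstacle, but nothing in your plan explains why back-and-forth would close. In the paper it rests on three ingredients.
\begin{itemize}
\item Realizing a finite piece of a type reduces, via the Cluckers--Halupczok Swiss-cheese lemma, to comparing indices $|K_0/K_i|$ and $|(K_0\cap p^{r-1}G)/(K_i\cap p^{r-1}G)|$ of $\sigma$-balls in the two models. This is possible only because ribs have size $p$, so each index is $p^d$ with $d$ counted in $\Sigma_p$.
\item An induction on $r$ uses axioms describing $G^{\sigma_{p^r}}_{\le\gamma}\cap p^{r-1}G$ and $G^{\sigma_{p^r}}_{\le\gamma}+p^{r-1}G$ in terms of $\sigma_p$ and $\sigma_{p^{r-1}}$.
\item For discrete $G/H$ one also needs the shifted valuations $\sigma_{p^r,k}(x)=\sigma_{p^r}(x-k_H)$. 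These produce an extra case (C) in the distality claim.
\end{itemize}
Relatedly, ``the rib structure is distal'' is not the right replacement hypothesis. What makes the argument work is that the valuations in the language have finite ribs. Then Case 3 collapses as before, and everything else is pushed into the auxiliary sorts, whose distality is checked directly.

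\textbf{A false supporting claim.} You justify distality of $\bigoplus_I\mathbb{F}_p$ with a lexicographic order ``since it is interpretable in a dense linear order together with finite data.'' This is false: its elements have supports of unbounded size, and a pure dense order interprets no infinite group. The valued version is distal, but that needs a real argument.
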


In addition to $(\mathfrak{s}_n)_{n\geq 2}$, another family of map $(\mathfrak{t}_n)_{n\geq 2}$ is defined.
These two kinds of maps serve as the key tools for our results.
An immediate but important observation is that these maps can be viewed as valuations.
Furthermore, the quantifier elimination by Cluckers and Halupczok \cite{Cluckers2011QuaEli} reduces the type of an element $a$ to those of its values $(\mathfrak{s}_{p^r}(a))_{\text{$p$:prime,}\,r \geq 1}$ and $(\mathfrak{t}_p(a))_{\text{$p$:prime}}$.
Our first result asserts that the presence of ribs of unbounded sizes with respect to $\mathfrak{s}_p$ destroys distality.
The core idea for obtaining a distal expansion of an ordered abelian group is to refine these valuations so that the resulting ribs are of size $p$.

Combining our results with that of Aschenbrenner et al., the problem of determining which henselian valued fields are distal (have distal expansions) reduces to determining which pure fields are distal (have distal expansions).
Indeed, they provided a conjectural classification of henselian valued fields admitting distal expansions.
Shelah's NIP fields conjecture states that every infinite NIP field is either separably closed, real closed, or admits a nontrivial henselian valuation.
Aschenbrenner et al. \cite[Corollary 6.15]{Aschenbrenner2022DisVal} showed the following:
\begin{quote}
    Assume that NIP fields conjecture holds, and that every ordered abelian group admits a distal expansion.
    If a field $K$ has NIP, then the following are equivalent:
    \begin{itemize}
        \item $K$ has a distal expansion;
        \item $K$ does not interpret an infinite field of characteristic $p > 0$;
        \item $K$ does not define a valuation ring whose residue field is infinite of characteristic $p > 0$;
        \item $K$ has a henselian valuation ring whose residue field is either algebraically closed of characteristic $0$, real closed, or finite.
    \end{itemize}
\end{quote}
Since we establish the latter assumption, the NIP fields conjecture is the only remaining problem for this classification.

Let us discuss another aspect of our results.
As mentioned earlier, many model-theoretic properties of henselian valued fields are derived from those of their value groups and residue fields, and such transfer results are referred to as AKE principle.
On the other hand, the valuations $\mathfrak{s}_n$ and $\mathfrak{t}_n$ are essential for describing the first-order theory of ordered abelian groups.
Since spines and ribs are the counterparts of value groups and residue fields, one expects that their model-theoretic properties to be inherited by the entire ordered abelian groups, yielding AKE principle for ordered abelian groups.
Indeed, several previous results can be viewed from this perspective:

\begin{quote}
    If we introduce to spines unary predicates indicating the architecture of ribs, then the theory of $G$ is determined by that of the spines (\cite[Corollary 4.7]{Schmitt1982ModThe}).
\end{quote}

\begin{quote}
    The dp-rank of $G$ is $1 + \sum_{p\in\mathbb{P}} \left| \left\{ H \in \mathcal{S}_p \setminus \left\{ \emptyset \right\} \;\middle|\; |R_H^{\mathfrak{s}_p}| = \infty \right\} \right|$ if $G$ has finite spines (\cite[Theorem 7.2]{Farre2017StrOrd}).
\end{quote}

\begin{quote}
    Assume that $\mathfrak{t}_{\infty}$, defined by $\mathfrak{t}_{\infty}(a) = \bigcap_{\text{$p$:prime}} \mathfrak{t}_p(a)$, is definable and that $(G,\mathfrak{t}_{\infty})$ is maximal as a valued abelian group.
    If the ribs and spine with respect to $\mathfrak{t}_{\infty}$ are stably embedded, then $G$ itself is stably embedded \cite[Theorem 2.28]{Hils2023StaEmb}.
\end{quote}

Our results on distality can also be understood as an instance of this AKE principle.
While the spines are colored orders (i.e., linear orders with unary predicates) and hence distal, the ribs with respect to $\mathfrak{s}_p$ are $\mathbb{F}_p$-vector spaces and thus stable; consequently, they are distal if and only if they are finite.
On the other hand, $\mathbb{F}_p$-vector spaces always have a distal expansion:
For $V = \bigoplus_{i \in I} \mathbb{F}_p$, introduce a linear order on $I$ and define a valuation $v: V \to I \cup \left\{ -\infty \right\}$ by $v(0) = -\infty$ and $v\left( (a_i)_{i \in I} \right) = \max \left\{ i \in I \mid a_i \neq 0 \right\}$.
One can show that $(V,+,v)$ is distal (we omit the proof).
Therefore, our results state that the distality, or the existence of a distal expansion, of the ribs and spines is inherited by the whole group.

The structure of this paper is as follows.
In \autoref{chp:Notation}, we establish the notation and review some basic facts regarding distality.
\autoref{chp:Review} collects the essential facts required to prove distality and non-distality.
This chapter consists of a detailed study of the valuations $\mathfrak{s}_n$ and $\mathfrak{t}_n$, illustrative examples of ordered abelian groups, and a brief review of the relative quantifier elimination by Cluckers and Halupczok \cite{Cluckers2011QuaEli}.
Most of the definitions and statements are taken from that paper.
The characterization of distality for ordered abelian groups is given in \autoref{chp:Distality}.
Both directions, distality under bounded rib sizes and non-distality under unbounded rib sizes, require some effort and are each treated in a dedicated section.
We show the existence of a distal expansion in \autoref{chp:DisExp}.
We construct valuations refining $(\mathfrak{s}_{p^r})_{\text{$p$:prime,}\,r \geq 1}$, establish relative quantifier elimination for an expanded language adapting the framework of Cluckers and Halupczok, and then prove distality.

\section*{Acknowledgement}

This achievement was supported by JST SPRING, Grant Number JPMJSP2124.

\section{Notation and preliminaries}
\label{chp:Notation}

\subsection{Notation}
\label{sec:Notation}

We use $\overline{a}$, $\overline{b}$,... to denote tuples of finite length unless otherwise specified (we sometimes deal with tuples of infinite length), and $|\overline{a}|$ denotes the length of the tuple $\overline{a}$.

For linearly ordered sets $I$ and $J$, we denote by $I + J$ the disjoint union $I \sqcup J$ with the order such that $i < j$ for any $i \in I$ and $j \in J$.
We write $(c)$ to refer to a singleton $\left\{ c \right\}$ with its unique linear order.
Thus, $I + (c) + J$ is the linearly ordered set in which $c$ is placed between $I$ and $J$.
For $i \in I$, we define $I_{\square i} = \left\{ j \in I \mid j \mathrel{\square} i \right\}$, where $\square$ is either $>$, $\geq$, $<$, or $\leq$.
For $S, T \subseteq I$, we write $S < T$ if every element of $S$ is less than every element of $T$.
We adopt the same notation $\overline{a} < \overline{b}$ for tuples as well.
For a sequence $(a_i)_{i \in I}$ and a tuple of indices $\bar{\imath} = (i_1,\dots,i_n)$, $a_{\bar{\imath}}$ denotes the tuple $(a_{i_1},\dots,a_{i_n})$.

We let $\mathbb{P}$ denote the set of prime numbers.
For each $p \in \mathbb{P}$, $v_p$ denotes the $p$-adic valuation; that is, for a nonzero integer $m$, $v_p(m)$ is the maximal integer $r$ such that $p^r \mid m$.

Throughout this paper, $G$ denotes an ordered abelian group $(G;<,+)$ unless otherwise specified.
We regard $G$ as a structure in the language $L_{\mathrm{oag}} = \left\{ <,+ \right\}$ or $L_{\mathrm{syn}}$, the latter of which is defined in \autoref{sec:ReviewSyn}.
$G$ is called \emph{discrete} if it has a smallest positive element, and \emph{dense} otherwise.
We say $G$ is \emph{archimedean} if for all positive elements $a$ and $b$, there exists a positive integer $n$ such that $na > b$.

We follow standard notation for groups: For instance, $nG = \left\{ na \mid a \in G \right\}$ for an integer $n$.
For two subsets $A$ and $B$ of $G$, $A + B$ denotes $\left\{ a + b \mid a \in A, b \in B \right\}$.
For a subgroup $H$ and $a \in G$, $a / H$ denotes the image of $a$ in the quotient group $G / H$.
For $a,b \in G$, we write $a \equiv_m b$ if $a - b \in mG$.
We use the dot product notation: For a tuple of integers $\overline{m} = (m_1,\dots,m_n)$ and a tuple of elements $\overline{a} = (a_1,\dots,a_n)$ from $G$, $\overline{m} \cdot \overline{a} = \sum_{i=1}^n m_i a_i$.

We write $H \Subset G$ to indicate that $H$ is a convex subgroup of $G$.
For two convex subgroups $H_1$ and $H_2$, we write $H_1 \leq H_2$ if $H_1 \subseteq H_2$.
This ordering defines a linear order on the set of convex subgroups of $G$.
We also define $\emptyset < H$ for any $H \Subset G$.
$\langle a \rangle^{\mathrm{conv}} = \left\{ x \in G \mid -k|a| \leq x \leq k|a| \text{ for some }k \in \mathbb{Z}_{>0} \right\}$ is the convex subgroup generated by $a \in G$.
If $H$ is convex, the order on $G$ induces an order on $G / H$ compatible with the addition, defined as follows:
$x / H < y / H$ if and only if $x < y$ and $y - x \notin H$.
If $G / H$ is discrete, then for each $k \in \mathbb{Z}$, we let $k_H$ denote $k$ times the smallest positive element of $G / H$.
\subsection{Generalities on distality}
\label{sec:GeneralitiesOnDistality}

By a ``monster model'', we mean a $\kappa$-big model in the sense of Hodges \cite[Chapter 10]{Hodges1993ModThe} for a sufficiently large cardinal $\kappa$.
It is irrelevant what $\kappa$-bigness actually is.
What is important is the following facts (within ZFC):
Every structure has a $\kappa$-big elementary extension for any $\kappa$;
$\kappa$-bigness implies $\kappa$-saturation and strong $\kappa$-homogeneity;
interpretations preserve $\kappa$-bigness.
For these reasons, $\kappa$-big models are suitable as workspaces in which we examine various properties of a theory.

In this section, $T$ denotes a complete, possibly many-sorted theory.
All elements are assumed to be taken from a monster model of $T$.
Let $DLO$ denote the theory of dense linear orders without endpoints.

\begin{dfn}
    We say $T$ is \emph{distal} if for every $I, J \models DLO$, every sequence $(\overline{a}_k)_{k \in I+(c)+J}$, and every $\overline{b}$ (where $|\overline{a}_k|$ and $|\overline{b}|$ are possibly infinite), the following holds: If
    \begin{itemize}
        \item $(\overline{a}_k)_{k \in I+(c)+J}$ is indiscernible, and
        \item $(\overline{a}_k)_{k \in I+J}$ is indiscernible over $\overline{b}$,
    \end{itemize}
    then $(\overline{a}_k)_{k \in I+(c)+J}$ is indiscernible over $\overline{b}$.
    We say a structure is distal (has a distal expansion) if its theory is distal (has a distal expansion).
\end{dfn}

Simon \cite{Simon2013DisNon} established many fundamental properties of distality upon its introduction.
Results not treated in that paper are covered in \cite[Chapter 9]{Simon2015GuiNIP} and \cite[Chapter 1]{Aschenbrenner2022DisVal}.
Furthermore, distal theories are known to have NIP (\cite[Proposition 2.9]{Gehret2020DisAsy}).
We cite two results for later use.
The first provides a sufficient condition for distality that appears weaker than the original definition.

\begin{prop}[A slightly adapted version of {\cite[Proposition 1.10 and Corollary 1.11]{Aschenbrenner2022DisVal}}]
    Fix arbitrary $I, J \models DLO$.
    Suppose that for every sequence $(\overline{a}_k)_{k \in I+(c)+J}$ and every $\overline{b}$, if:
    \begin{itemize}
        \item $(\overline{a}_k)_{k \in I+(c)+J}$ is indiscernible, and
        \item $(\overline{a}_k)_{k \in I+J}$ is indiscernible over $\overline{b}$,
    \end{itemize}
    then $\mathrm{tp}(\overline{a}_c / \overline{b}) = \mathrm{tp}(\overline{a}_k / \overline{b})$ for any $k \in I + J$.
    Then $T$ is distal.
    
    Moreover, the same statement holds with either of the following additional conditions:
    \begin{itemize}
        \item $|\overline{a}_k| = 1$ and $|\overline{b}|$ is finite.
        \item $|\overline{a}_k|$ is finite and $|\overline{b}| = 1$.
    \end{itemize}
\end{prop}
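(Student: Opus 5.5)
The plan is to reduce the full definition of distality to the weakened hypothesis in two stages: first upgrade ``$\overline{a}_c$ has the same type over $\overline{b}$ as the other $\overline{a}_k$'' to full indiscernibility of the extended sequence over $\overline{b}$, and then justify the two ``moreover'' restrictions.

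\textbf{From equal types to indiscernibility.} Assume the hypothesis and fix a sequence $(\overline{a}_k)_{k\in I+(c)+J}$ that is indiscernible, with $(\overline{a}_k)_{k\in I+J}$ indiscernible over $\overline{b}$. We must show that for any increasing tuples $\bar\imath,\bar\jmath$ from $I+(c)+J$ the tuples $a_{\bar\imath}$ and $a_{\bar\jmath}$ have the same type over $\overline{b}$. It suffices to treat $\bar\imath$ containing $c$ and $\bar\jmath$ an increasing tuple from $I+J$ of the same order type. Since $I,J\models DLO$, we can choose finitely many extra indices and absorb them into the parameter. Concretely, write $\bar\imath=(\bar\imath_-,c,\bar\imath_+)$. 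Let $\overline{b}'=\overline{b}\,a_{\bar\imath_-}a_{\bar\imath_+}$, and shrink $I$ to $I'=I_{>\max\bar\imath_-}$ and $J$ to $J'=J_{<\min\bar\imath_+}$; both are still models of $DLO$. Then $(\overline{a}_k)_{k\in I'+(c)+J'}$ is indiscernible, and $(\overline{a}_k)_{k\in I'+J'}$ is indiscernible over $\overline{b}'$. The hypothesis is stated for a fixed pair $I,J$, but any two countable models of $DLO$ are isomorphic; for general $I,J$ we pass, by compactness, to a sequence indexed by the fixed $I,J$ with the same EM-type over $\overline{b}'$. The hypothesis then gives $\mathrm{tp}(\overline{a}_c/\overline{b}')=\mathrm{tp}(\overline{a}_k/\overline{b}')$ for $k\in I'$. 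Hence $a_{\bar\imath}\equiv_{\overline{b}} a_{\bar\imath_-}\overline{a}_k a_{\bar\imath_+}$, and the latter tuple is indexed inside $I+J$, so indiscernibility over $\overline{b}$ finishes the argument. Infinite tuples $\overline{b}$ are handled because types are determined by finite subtuples.

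\textbf{The ``moreover'' clauses.} These are the substantive part; this is where I would rely on the cited \cite[Proposition 1.10 and Corollary 1.11]{Aschenbrenner2022DisVal}. For the reduction to $|\overline{b}|=1$ with finite $|\overline{a}_k|$, one adds the coordinates of $\overline{b}$ one at a time. Suppose $(\overline{a}_k)_{I+J}$ is indiscernible over $b_1b_2$. By the previous paragraph applied with $b_1$, the full sequence is indiscernible over $b_1$. Now consider the sequence $(\overline{a}_k b_1)_k$, which is indiscernible, and apply the $|\overline{b}|=1$ hypothesis with parameter $b_2$. This gives $\overline{a}_c b_1\equiv_{b_2}\overline{a}_k b_1$. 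Induction then handles any finite $\overline{b}$, and compactness handles infinite $\overline{b}$. The catch is that this step uses the hypothesis for tuples of length $|\overline{a}_k|+1$, which is allowed since $|\overline{a}_k|$ is merely finite.

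For $|\overline{a}_k|=1$, we instead need to grow the length of the $\overline{a}$-tuples. This is the step I expect to be the main obstacle, because adding coordinates to $\overline{a}_k$ changes the sequence itself rather than the parameter. Simon's argument handles it by treating the extra coordinates of $\overline{a}_c$ as parameters for the remaining coordinates. One first uses indiscernibility to find a sequence in which the first coordinates of $\overline{a}_k$ and of $\overline{a}_c$ agree in type over $\overline{b}$. One then inserts the last coordinate as a singleton sequence over the enlarged parameter set, and the key tool is the ``insertion of finitely many points'' characterization of distality. For $L_{\mathrm{syn}}$-type applications I would simply cite this from \cite{Aschenbrenner2022DisVal} rather than reprove it.
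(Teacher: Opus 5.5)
Your argument for the main clause is correct, and it gives more than the paper does, since the paper only cites \cite{Aschenbrenner2022DisVal}. Absorbing $a_{\bar\imath_-}a_{\bar\imath_+}$ into the parameter and then transferring to the fixed index set $I+(c)+J$ by compactness works. For the transfer, it suffices to preserve three things: a formula $\phi$ with $\phi(x_c,\overline{b}')\wedge\neg\phi(x_k,\overline{b}')$, indiscernibility of the whole sequence, and indiscernibility of its $I+J$ part over $\overline{b}'$. All three are finitely satisfiable in the original sequence.

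\textbf{Gap in the clause ``$|\overline{a}_k|$ finite, $|\overline{b}|=1$''.} The step ``by the previous paragraph applied with $b_1$, the full sequence is indiscernible over $b_1$'' is not available under this hypothesis. The previous paragraph applies the hypothesis with the parameter $b_1a_{\bar\imath_-}a_{\bar\imath_+}$, not with $b_1$. Likewise, at the $n$-th stage of your induction you would need type equality over $b_1\cdots b_n a_{\bar\imath_-}a_{\bar\imath_+}$, which is longer than what the induction hypothesis supplies. So the induction as written is circular.

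\textbf{Fix: absorb the extra indices into the sequence, not the parameter.} Let $\bar\imath=(i_1<\dots<i_s<c<j_1<\dots<j_t)$ and $\Lambda=\mathbb{Q}+(c)+\mathbb{Q}$. Choose order embeddings $f_1,\dots,f_s$, $h$, $g_1,\dots,g_t$ of $\Lambda$ such that:
\begin{itemize}
\item $f_u(c)=i_u$, $h(c)=c$, and $g_v(c)=j_v$;
\item $f_u(\Lambda)\subseteq I$ and $g_v(\Lambda)\subseteq J$;
\item the images lie in pairwise disjoint intervals, so that $f_1(\Lambda)<\dots<f_s(\Lambda)<h(\Lambda)<g_1(\Lambda)<\dots<g_t(\Lambda)$.
\end{itemize}
Put $\overline{e}_k=(\overline{a}_{f_1(k)},\dots,\overline{a}_{f_s(k)},\overline{a}_{h(k)},\overline{a}_{g_1(k)},\dots,\overline{a}_{g_t(k)})$. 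Then:
\begin{itemize}
\item $(\overline{e}_k)_{k\in\Lambda}$ is indiscernible;
\item $(\overline{e}_k)_{k\neq c}$ is indiscernible over the same $\overline{b}$;
\item $\overline{e}_c=a_{\bar\imath}$;
\item every $\overline{e}_k$ with $k\neq c$ is indexed by an increasing tuple from $I+J$.
\end{itemize}
Only the (finite) tuple length grows, so type equality over $\overline{b}$ for all sequences of finite tuples upgrades to full indiscernibility over $\overline{b}$ without enlarging $\overline{b}$. Your step with the sequence $(\overline{a}_kb_1\cdots b_n)_k$ and parameter $b_{n+1}$ then closes the induction.

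\textbf{The clause ``$|\overline{a}_k|=1$''.} Your sketch fails as described. To treat the last coordinate $a'_k$ as a singleton sequence over $\overline{b}$ enlarged by the other coordinates of $\overline{a}_c$, you need $(a'_k)_{k\in I+J}$ to be indiscernible over that enlarged set. This is false in general. Already in the distal structure $(\mathbb{Q},<)$, take $a_k=a'_k$ strictly increasing: then $a'_k<a_c$ holds exactly for $k\in I$. The reduction to singleton sequences is the genuinely nontrivial content of the cited result. Citing it, as the paper does, is fine, but the sketch should not be presented as that reduction.
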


\begin{prop}[cf. {\cite[Corollary 1.27]{Aschenbrenner2022DisVal}}]
    Suppose that $T$ is one-sorted.
    Let $\widetilde{T}$ be a complete theory obtained from $T$ by adding imaginary sorts to the language of $T$.
    Then $T$ is distal if and only if $\widetilde{T}$ is distal.
\end{prop}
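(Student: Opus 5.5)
The statement to prove is the last proposition: if $T$ is one-sorted and $\widetilde{T}$ is obtained by adding imaginary sorts, then $T$ is distal iff $\widetilde{T}$ is distal. I will argue directly from the definition of distality, using that a monster model of $\widetilde{T}$ is interpreted in a monster model of $T$. Every imaginary element is the class $\overline{a}/E$ of a finite tuple of the home sort modulo a $\emptyset$-definable equivalence relation $E$. Indiscernibility of sequences of home-sort tuples over home-sort parameters means the same thing in $T$ and in $\widetilde{T}$, because every $\widetilde{T}$-formula in home-sort variables is equivalent to a $T$-formula. Hence distality of $\widetilde{T}$ immediately implies distality of $T$: a counterexample to distality in $T$ is also a counterexample in $\widetilde{T}$.

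For the converse, assume $T$ is distal. Take $(\overline{a}_k)_{k\in I+(c)+J}$ and $\overline{b}$ in $\widetilde{T}$ witnessing a failure of the definition. The tuples may be infinite and may mix sorts.

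First I lift everything to the home sort. For each imaginary coordinate of $\overline{b}$, I choose an arbitrary real representative, obtaining a real tuple $\overline{b}'$. Indiscernibility over $\overline{b}'$ implies indiscernibility over $\overline{b}$, so it suffices to show that $(\overline{a}_k)_{k\in I+(c)+J}$ is indiscernible over $\overline{b}'$.

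Lifting the sequence is the main obstacle. I need real representatives $\overline{a}'_k$ such that
\begin{itemize}
\item $(\overline{a}'_k)_{k\in I+(c)+J}$ is indiscernible, and
\item $(\overline{a}'_k)_{k\in I+J}$ is indiscernible over $\overline{b}'$.
\end{itemize}
Once I have these, distality of $T$ gives indiscernibility of the lifted sequence over $\overline{b}'$. Projecting back to classes then gives indiscernibility of the original sequence over $\overline{b}$.

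To build the lifts, I first pick arbitrary representatives $\overline{a}''_k$. By Ramsey and compactness, I extract from them a sequence indexed by $I+J$ that is indiscernible over $\overline{b}'$ and realizes the EM-type of $(\overline{a}''_k)_{k\in I+J}$ over $\overline{b}'$. Its projection to classes has the same EM-type over $\overline{b}$ as $(\overline{a}_k)_{k\in I+J}$, since the latter is indiscernible over $\overline{b}$. So, by homogeneity of the monster model, after applying an automorphism fixing $\overline{b}$ I may assume these are representatives of the original $\overline{a}_k$ for $k\in I+J$. I am less sure of this step, because it requires the automorphism to also move $\overline{b}'$ correctly; fixing $\overline{b}'$ itself is stronger than fixing $\overline{b}$. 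I would handle this by doing the extraction while keeping $\overline{b}'$ as a parameter, and then choosing $\overline{b}'$ afterwards by realizing the appropriate type over the lifted sequence.

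The representative of $\overline{a}_c$ then has to be added. The type of $\overline{a}_c$ over $(\overline{a}_k)_{k\ne c}$ is the limit type determined by the cut. Using indiscernibility of the full imaginary sequence together with compactness, I would realize a representative $\overline{a}'_c$ so that the whole lifted sequence is indiscernible.

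If this lifting step proves too delicate, my fallback is the reduction in the preceding proposition. That lets me work with finite $\overline{a}_k$ and a single $\overline{b}$, and only requires showing $\mathrm{tp}(\overline{a}_c/\overline{b})=\mathrm{tp}(\overline{a}_k/\overline{b})$. In that setting, the lifting can be handled one formula at a time by compactness.
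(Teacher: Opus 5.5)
Your easy direction is correct. The first half of your lifting is also correct, and the worry you raise there is not a real obstacle. After extracting over $\overline{b}'$ and applying some $\sigma\in\mathrm{Aut}(\mathfrak{C}/\overline{b})$, simply replace $\overline{b}'$ by $\sigma(\overline{b}')$. This is still a representative of $\overline{b}$, and the lifted $I+J$ part is indiscernible over it.

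\textbf{The gap.} The genuine gap is the next step: adding a representative $\overline{a}'_c$ of $\overline{a}_c$ so that $(\overline{a}'_k)_{k\in I+(c)+J}$ is indiscernible. For this you need $\overline{a}_c$ to realize the $\pi$-image of the cut type of the \emph{real} sequence $(\overline{a}'_k)_{k\in I+J}$, over those real tuples. By compactness, this means each formula $\exists\overline{x}\,(\pi(\overline{x})=\overline{a}_c\wedge\varphi(\overline{x},\overline{a}'_{\bar\imath},\overline{a}'_{\bar\jmath}))$ with $\varphi$ in that cut type must hold. These are statements about $\overline{a}_c$ over real parameters, and indiscernibility of the imaginary sequence does not decide them. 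It only fixes $\mathrm{tp}(\overline{a}_c/(\overline{a}_k)_{k\neq c})$. Moreover, your representatives were produced by an automorphism that fixes $\overline{b}$ but moves $\overline{a}_c$ in an uncontrolled way. Your fallback (finite $\overline{a}_k$, $|\overline{b}|=1$, one formula at a time) does not touch this, because the obstruction is the type of $\overline{a}_c$ over the representatives, not the number of formulas or the length of tuples.

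\textbf{Why the argument is circular.} This step is not merely unproved; it carries the whole content of the proposition.
\begin{itemize}
\item If such an $\overline{a}'_c$ exists, distality of $T$ immediately gives the conclusion.
\item Conversely, the general assertion you need is this: whenever $A'$ is a real indiscernible sequence, every imaginary filler of a cut of $\pi(A')$ is the image of a real filler of the same cut of $A'$. This already follows from distality of $\widetilde{T}$. Suppose a formula $\theta(\overline{y};\overline{a}'_{\bar\imath},\overline{a}'_{\bar\jmath})$ separated $\overline{a}_c$ from the images of the elements near the cut. The convex piece of the sequence strictly between $\bar\imath$ and $\bar\jmath$ is indiscernible over the real tuple $\overline{a}'_{\bar\imath}\overline{a}'_{\bar\jmath}$. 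Then $\overline{a}_c$ would witness a failure of distality of $\widetilde{T}$ with a real parameter.
\end{itemize}
So, given that $T$ is distal, the lifting claim is equivalent to the statement you are trying to prove.

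Lifting in the other order hits the same wall. Suppose you first take a real indiscernible lift of the whole sequence and then look for a representative of $\overline{b}$. You then need the lift of the $I+J$ part to be indiscernible over some $\overline{b}^\dagger\equiv_{(\overline{a}_k)_{k\in I+(c)+J}}\overline{b}$, and nothing controls that either.

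Note that the paper gives no argument of its own for this proposition; it quotes \cite[Corollary 1.27]{Aschenbrenner2022DisVal}. A self-contained proof needs an additional idea exactly at this point: some way of controlling how an imaginary cut-filler sits relative to real representatives of the sequence. Ramsey and compactness lifting alone will not supply it.
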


Consequently, to prove the distality of a theory with imaginary sorts, it suffices to consider sequences $(\overline{a}_i)_{i \in I+(c)+J}$ and $\overline{b}$ both from the real sort.

\section{Review on ordered abelian groups}
\label{chp:Review}

\subsection{Valuations}
\label{sec:ReviewValuations}

In this section, we introduce three functions: $\mathfrak{s}_n$, $\mathfrak{t}_n$, and $\mathfrak{t}_n^+$.
As we will see, these functions essentially behave as valuations.\footnote{Despite its importance, the valuation-like behavior of these functions has received surprisingly little attention: We are aware of only one paper (\cite{Hils2023StaEmb}), that mentions this.}
In our argument concerning distality, the first two ``valuations'' play a major role, while $\mathfrak{t}_n^+$ is essentially equivalent to $\mathfrak{t}_n$.
We therefore examine their properties closely in this section.
Although many lemmas here are taken from Cluckers and Halupczok \cite{Cluckers2011QuaEli}, we include their proofs for completeness as they require little effort.

We begin with some basic facts regarding abelian groups.

\begin{lem}\label{lem:CRT}
    Let $G$ be a (not necessarily ordered) abelian group, $a, b \in G$, and $m,n$ positive coprime integers.
    \begin{enumerate}
        \item $mG \cap nG = mnG$.
        \item $G = mG + nG$. Moreover, $(a + mG) \cap (b + nG)$ is a coset of $mnG$.
        \item If $ma \in nG$, then $a \in nG$.
    \end{enumerate}
\end{lem}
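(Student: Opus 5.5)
The whole argument rests on Bézout's identity: since $\gcd(m,n)=1$, we fix integers $u,v$ with $um+vn=1$ once and for all. Each part then follows by writing an element $x$ as $x=umx+vnx$ and reading off where the two summands lie.

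For (2), take any $x\in G$. Then $x = m(ux) + n(vx) \in mG+nG$, so $G=mG+nG$. For the coset statement, apply this to $b-a$ to write $b-a = mx - ny$ for some $x,y\in G$. Then $c := a+mx = b+ny$ lies in $(a+mG)\cap(b+nG)$, so the intersection is nonempty. Hence it equals $c + \bigl(mG\cap nG\bigr)$, since it is the intersection of two cosets that both contain $c$.

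It is therefore convenient to prove (1) before finishing (2). The inclusion $mnG\subseteq mG\cap nG$ is immediate. Conversely, let $x = my = nz$. Then
\[
x = umx + vnx = um(nz) + vn(my) = mn(uz+vy) \in mnG .
\]
With (1) in hand, the intersection in (2) is $c + mnG$, a coset of $mnG$.

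For (3), suppose $ma = nz$. Then
\[
a = uma + vna = unz + vna = n(uz+va) \in nG .
\]

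There is no real obstacle here. The only point requiring care is to prove (1) before the coset part of (2), since that part uses (1).
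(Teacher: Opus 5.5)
Your proof is correct and follows essentially the same route as the paper: fix a B\'ezout identity $1 = um + vn$, derive (1), $G = mG + nG$, and (3) directly from it, then show the intersection in (2) is nonempty and hence a coset of $mG \cap nG = mnG$. You also correctly note that the coset part of (2) depends on (1), as in the paper.
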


\begin{proof}
    Since $m$ and $n$ are coprime, there exist integers $s$ and $t$ such that $1 = sm + tn$.
    This implies (1), $G = mG + nG$, and (3).
    Since $a - b \in mG + nG$, the intersection $(a + mG) \cap (b + nG)$ is not empty.
    Hence, it is a coset of $mG \cap nG = mnG$.
\end{proof}

We fix an ordered abelian group $G$ from now on.
Keeping in mind that the union and intersection of convex subgroups are again convex subgroups, we give the following definition.

\begin{dfn}[{\cite[Definition 1.1]{Cluckers2011QuaEli}}]
    Fix an integer $n \geq 2$.
    \begin{itemize}
        \item For $a \in G \setminus nG$, $\mathfrak{s}_n(a)$ is the largest convex subgroup $H \Subset G$ such that $a \notin H + nG$.
        If $a \in nG$, we set $\mathfrak{s}_n(a) = \emptyset$.
        We define $\mathcal{S}_n = \left\{ \mathfrak{s}_n(a) \mid a \in G \right\}$.\footnote{In \cite{Cluckers2011QuaEli}, $\mathfrak{s}_n(a)$ is denoted by $H_a$, and the symbol $\mathfrak{s}_n(a)$ itself refers to the equivalence class defined by the relation $H_a = H_b$.
        Here, we adopt a more direct notation.
        Additionally, in that paper, $\mathfrak{s}_n(a)$ is defined to be $\left\{ 0 \right\}$ for $a \in nG$. We find our current convention more convenient for treating $\mathfrak{s}_n$ as a valuation.}
        \item For $a \in G$, $\mathfrak{t}_n(a) = \bigcup_{H \in \mathcal{S}_n, a \notin H} H$.
        Also, $\mathfrak{t}_n^+(a) = \bigcap_{H \in \mathcal{S}_n, a \in H} H$, where the empty intersection is defined as $G$.
        We define $\mathcal{T}_n = \left\{ \mathfrak{t}_n(a) \mid a \in G \right\}$ and $\mathcal{T}_n^+ = \left\{ \mathfrak{t}_n^+(a) \mid a \in G \right\}$.
    \end{itemize}
\end{dfn}

Although we cite these definitions from Cluckers and Halupczok \cite{Cluckers2011QuaEli}, these maps already appeared in Schmitt \cite{Schmitt1982ModThe}:
The map $F_n$ given there is precisely $\mathfrak{s}_n$, and while the maps $A_n$ and $B_n$ were defined in terms of $n$-regularity (\autoref{dfn:nRegurality}), one can show that $A_n(g) = \mathfrak{t}_n(g)$ and $B_n(g) = \mathfrak{t}_n^+(g)$ except when $\mathfrak{t}_n(g) = \emptyset$.
Schmitt (ibid.) also refers to the union $\mathcal{S}_n \cup \mathcal{T}_n$ as the \emph{$n$-spine} and denotes it by $\mathrm{Sp}_n$.
It is immediate that if $\mathcal{S}_n$ is finite, then $\mathcal{T}_n$, $\mathcal{T}_n^+$, and consequently $\mathrm{Sp}_n$ are also finite.
As we shall see in \autoref{lem:DifficultValuationProperties}, if $\mathcal{S}_p$ is finite for all $p \in \mathbb{P}$, then $\mathcal{S}_n$ is finite for all $n \geq 2$.
Accordingly, we say that $G$ has \emph{finite spines} if $|\mathcal{S}_p| < \infty$ for all $p \in \mathbb{P}$.

\begin{lem}[{\cite[Lemma 2.1]{Cluckers2011QuaEli}}]
    For each $n \geq 2$, $\mathfrak{s}_n(a)$ is definable uniformly in $G$ and $a$.
    That is, there exists a formula $\varphi(x,y)$ in $L_{\mathrm{oag}} = \left\{ <,+ \right\}$ such that for any ordered abelian group $G$ and any $a \in G$, we have $\varphi(G,a) = \mathfrak{s}_n(a)$.
    Consequently, $\mathfrak{t}_n(a)$ and $\mathfrak{t}_n^+(a)$ are also uniformly definable.
\end{lem}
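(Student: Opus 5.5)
We need a single formula $\varphi(x,y)$ with $\varphi(G,a)=\mathfrak{s}_n(a)$. The plan is to describe $\mathfrak{s}_n(a)$ explicitly as a set of elements $x$ defined from $a$.

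\textbf{Step 1: characterise when $x\in\mathfrak{s}_n(a)$.} For $a\notin nG$, I will show that $x\in\mathfrak{s}_n(a)$ if and only if
\[
a\notin \langle x\rangle^{\mathrm{conv}}+nG .
\]
Both directions reduce to the definition.

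First suppose $x\in\mathfrak{s}_n(a)$. Then $\langle x\rangle^{\mathrm{conv}}\subseteq\mathfrak{s}_n(a)$. If $a$ lay in $\langle x\rangle^{\mathrm{conv}}+nG$, it would lay in $\mathfrak{s}_n(a)+nG$, which is false.

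Conversely, suppose $a\notin\langle x\rangle^{\mathrm{conv}}+nG$. By maximality of $\mathfrak{s}_n(a)$ we get $\langle x\rangle^{\mathrm{conv}}\subseteq\mathfrak{s}_n(a)$. Here we use that the union of all convex subgroups $H$ with $a\notin H+nG$ is again such a subgroup. This holds because the convex subgroups form a chain: an element of $H+nG$ for the union lies in some single member of the chain plus $nG$.

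\textbf{Step 2: write the condition as a first-order formula.} The key observation is that for $h\in G$ and $g\in nG$,
\[
a=h+g \iff a-h\in nG .
\]
So the condition $a\in\langle x\rangle^{\mathrm{conv}}+nG$ becomes
\[
\exists h\,\exists z\,\bigl(a-h=nz \wedge h\in\langle x\rangle^{\mathrm{conv}}\bigr).
\]
The obstacle is that membership $h\in\langle x\rangle^{\mathrm{conv}}$ involves an unbounded multiple $k$, so it is not first-order as stated.

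\textbf{Step 3: bound the multiple $k$.} I will show that we may take $|h|\le |x|$, i.e.\ $k=1$ up to adjustment by $nG$.

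Suppose $a=h+nz$ with $|h|\le k|x|$. Replace $h$ by $h'=h-n\,m x$ for a suitable integer $m$, chosen by reducing along $x$. This gives $|h'|\le n|x|$ and still $a\equiv_n h'$.

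So for $a\notin nG$, define
\[
\varphi(x,a):\quad \neg\exists h\,\exists z\,\bigl(a-h=nz \wedge -n|x|\le h\le n|x|\bigr).
\]

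Checking the reduction more carefully: if $h\ge 0$, $k\ge n$ and $h>n|x|$, then $h-n|x|$ is still in $\langle x\rangle^{\mathrm{conv}}$ and is positive. Iterating, after finitely many steps (at most $k/n$ of them) we reach some $h'$ with $0\le h'\le n|x|$. Negative $h$ is handled symmetrically. So the bounded version is equivalent to the unbounded one.

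The case $a\in nG$ must give the empty set. Conjoin $\varphi$ with $\neg\exists z\,(a=nz)$; when $a\in nG$ the conjunction defines $\emptyset$, as required.

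\textbf{Step 4: uniform definability of $\mathfrak{t}_n$ and $\mathfrak{t}_n^+$.} These follow from the uniform definability of $\mathfrak{s}_n$.

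For $\mathfrak{t}_n$:
\[
x\in\mathfrak{t}_n(a)\iff \exists b\,\bigl(x\in\mathfrak{s}_n(b)\wedge a\notin\mathfrak{s}_n(b)\bigr).
\]
The empty set $\mathfrak{s}_n(b)$ for $b\in nG$ is harmless: it contributes nothing to either side.

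For $\mathfrak{t}_n^+$:
\[
x\in\mathfrak{t}_n^+(a)\iff \forall b\,\bigl(a\in\mathfrak{s}_n(b)\rightarrow x\in\mathfrak{s}_n(b)\bigr).
\]
This formula also covers the empty-intersection convention: when no $b$ has $a\in\mathfrak{s}_n(b)$, it holds for every $x$ and so defines $G$.

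**The main point to be careful about is Step 3.** Everything else is bookkeeping; the only real content is that the multiple $k$ in $\langle x\rangle^{\mathrm{conv}}$ can be bounded by $n$ modulo $nG$.
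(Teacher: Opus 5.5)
Your proposal is correct and follows essentially the same route as the paper. Both arguments characterize $x \in \mathfrak{s}_n(a)$ by $a \notin \langle x\rangle^{\mathrm{conv}} + nG$ and then replace $\langle x\rangle^{\mathrm{conv}}$ by a bounded interval modulo $nG$; the paper uses $[0,n|x|]$ and you use $[-n|x|,n|x|]$, which is equivalent. From there, $\mathfrak{t}_n$ and $\mathfrak{t}_n^+$ are obtained from $\mathfrak{s}_n$ exactly as you describe, and your extra conjunct $\neg\exists z\,(a=nz)$ is redundant, since for $a\in nG$ the witness $h=0$ already makes $\varphi(G,a)$ empty.
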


\begin{proof}
    We have $b \in \mathfrak{s}_n(a) \iff \langle b \rangle^{\mathrm{conv}} \leq \mathfrak{s}_n(a) \iff a \notin \langle b \rangle^{\mathrm{conv}} + nG$.
    Since $\langle b \rangle^{\mathrm{conv}} + nG = [0,n|b|] + nG$, the uniform definability of $\mathfrak{s}_n$ is ensured, which in turn implies the uniform definability of $\mathfrak{t}_n$ and $\mathfrak{t}_n^+$.
\end{proof}

Recall that the convex subgroups are ordered by inclusion. $\mathfrak{t}_n$ and $\mathfrak{t}_n^+$ are equivalent in the following sense:

\begin{lem}\label{lem:tArrangement}
    For $a,b \in G$, $\mathfrak{t}_n(a) = \mathfrak{t}_n(b) \implies \mathfrak{t}_n^+(a) = \mathfrak{t}_n^+(b)$ and $\mathfrak{t}_n(a) < \mathfrak{t}_n(b) \implies \mathfrak{t}_n^+(a) \leq \mathfrak{t}_n(b)$.
    In particular, the map $\mathfrak{t}_n(a) \mapsto \mathfrak{t}_n^+(a)$ defines an order isomorphism between $\mathcal{T}_n$ and $\mathcal{T}_n^+$.
\end{lem}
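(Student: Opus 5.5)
The whole argument is a comparison of two cuts in the chain $\mathcal{S}_n$. The convex subgroups of $G$ are linearly ordered by inclusion, so $\mathcal{S}_n$ is a chain. For $a \in G$, split it as
\[
L_a = \left\{ H \in \mathcal{S}_n \mid a \notin H \right\}, \qquad U_a = \left\{ H \in \mathcal{S}_n \mid a \in H \right\}.
\]
Since convex subgroups are nested and $a$ lies in every convex subgroup containing some $H \in U_a$, we have $L_a < U_a$. Hence $(L_a, U_a)$ is a Dedekind cut of $\mathcal{S}_n$, with $\mathfrak{t}_n(a) = \bigcup L_a$ and $\mathfrak{t}_n^+(a) = \bigcap U_a$. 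The empty convention for $\emptyset \in \mathcal{S}_n$ is harmless: $\emptyset$ belongs to $L_a$ for every $a$, because $a \notin \emptyset$.

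\textbf{Step 1 (the cut is determined by its lower part).} If $L_a = L_b$, then $U_a = \mathcal{S}_n \setminus L_a = U_b$, so $\mathfrak{t}_n^+(a) = \mathfrak{t}_n^+(b)$. The main point is therefore to show that $\mathfrak{t}_n(a)$ determines $L_a$, and that a strict inequality of unions forces the corresponding inclusion of cuts.

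\textbf{Step 2 (recovering $L_a$ from its union).} I claim
\[
L_a = \left\{ H \in \mathcal{S}_n \mid H \subseteq \mathfrak{t}_n(a) \right\} \setminus \left\{ \mathfrak{t}_n(a) \text{ if } a \in \mathfrak{t}_n(a) \right\}.
\]
The inclusion $\subseteq$ is clear. For $\supseteq$, take $H \in \mathcal{S}_n$ with $H \subseteq \mathfrak{t}_n(a)$. If $H \subsetneq \mathfrak{t}_n(a)$, some $H' \in L_a$ is not contained in $H$; by linearity $H \subsetneq H'$, and then $a \notin H' \supseteq H$ gives $a \notin H$. If $H = \mathfrak{t}_n(a)$, then $H \in L_a$ exactly when $a \notin H$.

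Whether $a \in \mathfrak{t}_n(a)$ is itself a property of the set $\mathfrak{t}_n(a)$: it holds iff $\mathfrak{t}_n(a) \notin L_a$. I expect this to be the one delicate point. Suppose $\mathfrak{t}_n(a) = \mathfrak{t}_n(b) = T$ but $a \in T$ and $b \notin T$. Since $a \in T = \bigcup L_a$, we get $a \in H'$ for some $H' \in L_a$, contradicting $a \notin H'$. Hence $a \notin T$ always. So the exceptional case never occurs, and $L_a = \{ H \in \mathcal{S}_n \mid H \subseteq \mathfrak{t}_n(a) \}$ depends only on $\mathfrak{t}_n(a)$. This proves the first implication.

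\textbf{Step 3 (the inequality).} Suppose $\mathfrak{t}_n(a) < \mathfrak{t}_n(b)$. By the description of $L_a$ in Step 2, $L_a \subseteq L_b$, and the inclusion is strict because the unions differ. So some $H \in L_b \setminus L_a$ exists. Then $H \in U_a$, giving $\mathfrak{t}_n^+(a) \subseteq H$, and $H \in L_b$, giving $H \subseteq \mathfrak{t}_n(b)$. Therefore $\mathfrak{t}_n^+(a) \leq \mathfrak{t}_n(b)$.

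\textbf{The order isomorphism.} Step 1 shows that $\mathfrak{t}_n(a) \mapsto \mathfrak{t}_n^+(a)$ is well defined. For monotonicity, if $\mathfrak{t}_n(a) < \mathfrak{t}_n(b)$, then
\[
\mathfrak{t}_n^+(a) \leq \mathfrak{t}_n(b) \subseteq \mathfrak{t}_n^+(b),
\]
using $\bigcup L \subseteq \bigcap U$ for any cut $(L, U)$. The inequality is strict: otherwise $\mathfrak{t}_n(b) = \mathfrak{t}_n^+(b) = \mathfrak{t}_n^+(a)$. In that case $\bigcap U_a = \bigcup L_b$ and $L_a \subsetneq L_b$, so some $H \in L_b \cap U_a$ satisfies $\bigcap U_a \subseteq H \subseteq \bigcup L_b = \bigcap U_a$. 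Thus $H = \mathfrak{t}_n(b)$. But then $H \in L_b$ means $b \notin \mathfrak{t}_n(b)$, consistent with Step 2, while also $\mathfrak{t}_n(b) = \mathfrak{t}_n^+(b) \in U_b$, i.e.\ $b \in \mathfrak{t}_n(b)$, a contradiction.

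So the map is strictly increasing, and it is surjective by definition of $\mathcal{T}_n^+$. This gives the order isomorphism $\mathcal{T}_n \to \mathcal{T}_n^+$.
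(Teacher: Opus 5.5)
Your proof is correct, and it is the routine cut comparison the paper has in mind: the paper gives no written proof and only remarks that it is straightforward. One streamlining: for every $b$ the definitions give $b \notin \mathfrak{t}_n(b)$ and $b \in \mathfrak{t}_n^+(b)$. This yields Step 2 at once and gives $\mathfrak{t}_n(b) \subsetneq \mathfrak{t}_n^+(b)$, so strictness of the map needs no auxiliary $H$. The fact $b \in \mathfrak{t}_n^+(b)$ is also what actually justifies your phrase ``$\mathfrak{t}_n^+(b) \in U_b$'', since $\mathfrak{t}_n^+(b)$ need not be an element of $\mathcal{S}_n$ in general.
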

The proof is straightforward.
By virtue of this lemma, the properties of $\mathfrak{t}_n$ established below are inherited by $\mathfrak{t}_n^+$.

A \emph{valued abelian group} is a triple $(G,S,v)$, where $G$ is an abelian group, $S$ is a linearly ordered set, and $v:G \to S \cup \left\{ \infty \right\}$ is a surjective map satisfying for all $x, y \in G$:
\begin{itemize}
    \item $v(x) = \infty \iff x = 0$.
    \item $v(-x) = v(x)$.
    \item $v(x + y) \geq \min \left\{ v(x), v(y) \right\}$.
\end{itemize}
Such a map $v$ is called a \emph{valuation} on $G$.
See \cite[Section 2.2]{Aschenbrenner2017AsyDif} for a reference to general theory of valued abelian groups.
The next lemma shows that $\mathfrak{s}_n$ and $\mathfrak{t}_n$ behave similarly to valuations.

\begin{lem}\label{lem:ValuationAxioms}
    Let $n \geq 2$ and $a,b \in G$.
    \begin{enumerate}
        \item $\mathfrak{s}_n(-a) = \mathfrak{s}_n(a)$ and $\mathfrak{t}_n(-a) = \mathfrak{t}_n(a)$.
        \item $\mathfrak{s}_n(a + b) \leq \max \left\{ \mathfrak{s}_n(a), \mathfrak{s}_n(b) \right\}$ and $\mathfrak{t}_n(a + b) \leq \max \left\{ \mathfrak{t}_n(a), \mathfrak{t}_n(b) \right\}$.
    \end{enumerate}
\end{lem}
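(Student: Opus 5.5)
The plan is to prove both parts directly from the definitions, using two facts. First, $H + nG$ is a subgroup for every convex subgroup $H$. Second, the convex subgroups are linearly ordered by inclusion, and $\emptyset$ is placed below all of them.

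For (1), since $H + nG$ is a subgroup, $a \in H + nG$ if and only if $-a \in H + nG$. Also $a \in nG$ if and only if $-a \in nG$. So the convex subgroups $H$ with $a \notin H + nG$ are exactly those with $-a \notin H + nG$. Taking the largest one gives $\mathfrak{s}_n(-a) = \mathfrak{s}_n(a)$, and the empty case agrees as well. For $\mathfrak{t}_n$, membership in a subgroup $H$ is invariant under negation, so the index sets in $\bigcup_{H \in \mathcal{S}_n, a \notin H} H$ coincide for $a$ and $-a$, and $\mathfrak{t}_n(-a) = \mathfrak{t}_n(a)$.

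For the $\mathfrak{s}_n$ part of (2), I will first record the characterization that, for a convex subgroup $K$, we have $K \geq \mathfrak{s}_n(a)$ iff $a \in K + nG$ or $a\in nG$.
\begin{itemize}
\item If $a \in nG$, this holds trivially.
\item Otherwise, the set of convex $H$ with $a \notin H + nG$ is downward closed, and its largest element is $\mathfrak{s}_n(a)$. So any $K > \mathfrak{s}_n(a)$ satisfies $a \in K + nG$.
\item Conversely, $a \notin \mathfrak{s}_n(a) + nG$, so $K = \mathfrak{s}_n(a)$ itself fails the condition.
\end{itemize}
A care point is that $K \geq \mathfrak{s}_n(a)$ must be read as $K > \mathfrak{s}_n(a)$ when $K$ is to contain $a$ modulo $nG$. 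So the precise statement I will use is: $a \in K + nG$ iff $K > \mathfrak{s}_n(a)$, where $\emptyset$ counts as below everything and $a \in nG$ corresponds to $\mathfrak{s}_n(a)=\emptyset$.

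Now let $M = \max\{\mathfrak{s}_n(a),\mathfrak{s}_n(b)\}$. If $M = \emptyset$, then $a, b \in nG$, so $a+b \in nG$. Otherwise, every convex $K > M$ contains $a$ and $b$ modulo $nG$, hence contains $a+b$ modulo $nG$. This shows $\mathfrak{s}_n(a+b) \not> M$, and linearity of the order gives $\mathfrak{s}_n(a+b) \leq M$. The step needing the most attention is the boundary case where no convex subgroup lies strictly above $M$, for example $M = G$. I will handle it by noting that $\mathfrak{s}_n(a+b) \in \mathcal{S}_n$ is a proper convex subgroup with $a+b \notin \mathfrak{s}_n(a+b) + nG$. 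If $\mathfrak{s}_n(a+b) > M$, then taking $K = \mathfrak{s}_n(a+b)$ in the characterization yields $a+b \in K + nG$, a contradiction. This argument avoids needing any $K$ strictly above $M$.

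For $\mathfrak{t}_n$, note that $H \leq \mathfrak{t}_n(x)$ whenever $H \in \mathcal{S}_n$ and $x \notin H$. Suppose $H \in \mathcal{S}_n$ with $a+b \notin H$. Then $a \notin H$ or $b \notin H$, since $H$ is a subgroup, so $H \subseteq \mathfrak{t}_n(a) \cup \mathfrak{t}_n(b) = \max\{\mathfrak{t}_n(a),\mathfrak{t}_n(b)\}$. Taking the union over all such $H$ gives $\mathfrak{t}_n(a+b) \leq \max\{\mathfrak{t}_n(a), \mathfrak{t}_n(b)\}$. This last step is routine.
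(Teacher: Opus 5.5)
Your proof is correct and is essentially the paper's argument: for $\mathfrak{s}_n$ the paper likewise notes that any convex $H$ above both values has $a,b \in H+nG$, a subgroup, hence $a+b \in H+nG$. Your $\mathfrak{t}_n$ argument is the direct version of the paper's ``analogous'' step. Two small cleanups: drop your first-stated characterization with $K \geq \mathfrak{s}_n(a)$, which fails at $K=\mathfrak{s}_n(a)$ (as you yourself note), and observe that $M=G$ never occurs since $a \in G+nG$, so that boundary worry is moot, though your handling of it is valid.
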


\begin{proof}
    (1) is immediate from the definitions.
    For the first part of (2), suppose that $H \Subset G$ is larger than both $\mathfrak{s}_n(a)$ and $\mathfrak{s}_n(b)$.
    Then, $a, b \in H + nG$, which is a subgroup.
    Thus $a + b \in H + nG$, implying $H > \mathfrak{s}_n(a + b)$.
    The proof for $\mathfrak{t}_n$ is analogous.
\end{proof}

To regard $\mathfrak{s}_n$ as a standard valuation, one would need to reverse the ordering of $\mathcal{S}_n$.
Furthermore, since $\mathfrak{s}_n$ assigns $\emptyset$ to all elements in $nG$, it is more accurately a valuation on the quotient $G / nG$.
Nonetheless, we find these properties sufficiently useful to refer to $\mathfrak{s}_n$ and $\mathfrak{t}_n$ as valuations.

\begin{rmk}\label{rmk:TriangleIsIsosceles}
    As with standard valuations, if $\mathfrak{s}_n(a) \neq \mathfrak{s}_n(b)$, then $\mathfrak{s}_n(a + b) = \max \left\{ \mathfrak{s}_n(a), \mathfrak{s}_n(b) \right\}$.
    Equivalently, for any $a,b,c \in G$, at least two of $\mathfrak{s}_n(a - b)$, $\mathfrak{s}_n(b - c)$, and $\mathfrak{s}_n(c - a)$ are equal, and the third is less than or equal to the others.
    In other words, any ``triangle'' with respect to $\mathfrak{s}_n$ is isosceles, with its ``legs'' being greater than or equal to the ``base''.
    This property also holds for $\mathfrak{t}_n$.
\end{rmk}

We keep examining properties of the valuations.

\begin{rmk}\label{rmk:ConvexityConsequence}
    For a nonzero integer $k$, $a \in G$, and $H \Subset G$, we have $ka \in H \iff a \in H$.
    Consequently, $a \in H + nG \iff ka \in H + knG$ for any $n \geq 2$.
\end{rmk}

\begin{lem}\label{lem:EasyValuationProperties}
    Let $m,n \geq 2$ and $a \in G$.

    \begin{enumerate}
        \item $\mathfrak{s}_n(a) \leq \mathfrak{s}_{mn}(a)$.
        \item $\mathfrak{s}_n(a) = \mathfrak{s}_{mn}(ma)$.
        \item If $m$ and $n$ are coprime, then $\mathfrak{s}_n(ma) = \mathfrak{s}_n(a)$.
        \item $\mathfrak{t}_n(ma) = \mathfrak{t}_n(a)$.
    \end{enumerate}
\end{lem}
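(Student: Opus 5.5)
Each of the four items follows from the definitions by comparing, for each convex subgroup $H \Subset G$, the conditions $a \in H + nG$ and $a \in H + mnG$. I will use the characterization implicit in the definition: for $a \notin nG$, and $H \Subset G$, we have $H \geq \mathfrak{s}_n(a)$ is false exactly when $a \notin H + nG$. Equivalently, $\mathfrak{s}_n(a)$ is the union of all $H$ with $a \notin H + nG$. This is because the sets $H + nG$ increase with $H$, and a union of a chain of convex subgroups $H_i$ satisfies $\bigcup_i H_i + nG = \bigcup_i (H_i + nG)$. With the convention $\mathfrak{s}_n(a) = \emptyset$ for $a \in nG$, this union description holds for all $a$, since then no $H$ qualifies.

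For (1), $mnG \subseteq nG$, so $a \notin H + nG$ implies $a \notin H + mnG$. Every $H$ contributing to $\mathfrak{s}_n(a)$ therefore contributes to $\mathfrak{s}_{mn}(a)$, which gives $\mathfrak{s}_n(a) \leq \mathfrak{s}_{mn}(a)$. For (2), \autoref{rmk:ConvexityConsequence} with $k = m$ gives $a \in H + nG \iff ma \in H + mnG$ for every convex $H$. The two families of subgroups defining $\mathfrak{s}_n(a)$ and $\mathfrak{s}_{mn}(ma)$ therefore coincide, and so do their unions.

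For (3), with $m,n$ coprime, I claim $a \in H + nG \iff ma \in H + nG$. The forward direction is trivial. For the converse, pass to the abelian group $G/H$: if $m(a/H) \in n(G/H)$, then \autoref{lem:CRT}(3) gives $a/H \in n(G/H)$, i.e.\ $a \in H + nG$. Again the defining families agree, so $\mathfrak{s}_n(ma) = \mathfrak{s}_n(a)$.

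For (4), $\mathfrak{t}_n(a)$ is the union of those $H \in \mathcal{S}_n$ with $a \notin H$. By \autoref{rmk:ConvexityConsequence}, $ma \in H \iff a \in H$ for every convex $H$ and $m \neq 0$. Hence the index sets for $\mathfrak{t}_n(ma)$ and $\mathfrak{t}_n(a)$ coincide, and the unions agree.

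The only point requiring care is the union characterization of $\mathfrak{s}_n(a)$ used in (1)–(3), in particular its compatibility with the $\emptyset$ convention when $a \in nG$, $a \in mnG$, or $ma \in mnG$. I would state it once at the start. After that, everything is a direct comparison of membership conditions.
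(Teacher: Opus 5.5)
Your proof is correct and follows the paper's argument: (1) from $mnG \subseteq nG$, (2) and (4) from \autoref{rmk:ConvexityConsequence}, and (3) from \autoref{lem:CRT}(3) applied in $G/H$, with the details spelled out via the description of $\mathfrak{s}_n(a)$ as the union of the convex $H$ with $a \notin H + nG$. One small slip in your preamble: the correct equivalence is $H \leq \mathfrak{s}_n(a) \iff a \notin H + nG$, since your ``$H \geq \mathfrak{s}_n(a)$ is false'' would wrongly exclude $H = \mathfrak{s}_n(a)$ itself; the union description you actually use is right and also covers the $\emptyset$ convention.
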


\begin{proof}
    (1) follows immediately from the definition.
    (2) and (4) are consequences of \autoref{rmk:ConvexityConsequence}.
    For (3), apply \autoref{lem:CRT} (3) to the quotient group $G / H$ for any $H \Subset G$.
\end{proof}

\begin{lem}[{\cite[Lemma 2.2]{Cluckers2011QuaEli}}]\label{lem:DifficultValuationProperties}
    Let $a \in G$, $n \geq 2$, $p \in \mathbb{P}$, and $r \geq 1$.

    \begin{enumerate}
        \item There exists $a' \in G$ such that $\mathfrak{s}_p(a') = \mathfrak{s}_{p^r}(a)$.
        \item $\mathfrak{s}_n(a) = \max_{p \in \mathbb{P}, p \mid n} \left\{ \mathfrak{s}_{p^{v_p(n)}}(a) \right\}$.
        \item $\mathcal{S}_n = \bigcup_{p \in \mathbb{P}, p \mid n} \mathcal{S}_p$ for any $n \geq 2$. In particular, $\mathcal{S}_{p^r} = \mathcal{S}_p$.
        \item $\mathfrak{t}_n(a) = \max_{p \in \mathbb{P}, p \mid n} \left\{ \mathfrak{t}_p(a) \right\}$ and $\mathfrak{t}_n^+(a) = \min_{p \in \mathbb{P}, p \mid n} \left\{ \mathfrak{t}_p^+(a) \right\}$.
    \end{enumerate}
\end{lem}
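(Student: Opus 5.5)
I would prove the four items in the order (2), (1), (3), (4): item (2) is pure group theory, and the others follow from it together with the elementary facts already established.

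\emph{Item (2).} Write $n = \prod_{p \mid n} p^{e_p}$ with $e_p = v_p(n)$. For a convex subgroup $H$, pass to the quotient $G/H$. By repeated use of \autoref{lem:CRT}(1), applied to the pairwise coprime factors $p^{e_p}$, we get
\[
H + nG = \bigcap_{p \mid n} \bigl(H + p^{e_p}G\bigr).
\]
Hence $a \notin H + nG$ if and only if $a \notin H + p^{e_p}G$ for some $p \mid n$. By definition $\mathfrak{s}_n(a)$ is the largest $H$ with $a \notin H+nG$, so it equals the maximum over $p \mid n$ of $\mathfrak{s}_{p^{e_p}}(a)$. For this we note that $\{H : a \notin H + kG\}$ is an initial segment of the convex subgroups, closed under unions. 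The case $a \in nG$, where everything is $\emptyset$, is consistent.

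\emph{Item (1).} This is the main obstacle. Let $H = \mathfrak{s}_{p^r}(a)$ and assume $H \neq \emptyset$, since otherwise $a' = 0$ works.

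\begin{itemize}
\item Choose the largest $k < r$ such that $a \in H' + p^kG$ for every convex $H' > H$. The plan is to write $a = h + p^k b$ with $h \in H'$ and take $a' = b$. The difficulty is that $H'$ varies, so $b$ depends on $H'$.
\item To handle this, work in $G/H$ and consider $\mathfrak{s}_{p}$ of suitable $b$ with $p^k b \equiv a$ modulo $H^+ + p^{k+1}G$. Here $H^+$ is the union condition, and since $H$ is the maximum it is attained in the sense that $a \in H'+p^rG$ for all $H'>H$.
\item More concretely, let $j$ be minimal such that $a \notin H + p^{j}G$. Then $j \ge 1$, and $a \in H' + p^{j-1}G$ fails only if $H' \le$ something smaller, by minimality.
\item For each $H' > H$ write $a = h' + p^{j-1}b'$ with $h' \in H'$ and $a - h' \in p^{r}G$. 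Then $p^{j-1}b' \in H' + p^{j}G$ gives nothing new, so instead I aim to show the candidate $b'$ satisfies $b' \notin H + pG$ and $b' \in H' + pG$.
\item The first of these follows from $a \notin H + p^jG$, using \autoref{rmk:ConvexityConsequence} to cancel $p^{j-1}$ modulo $H$. For independence of $H'$, pick $b'$ for one fixed $H'$ and use $\mathfrak{s}_p(b') \ge H$ together with the valuation inequality to push up.
\end{itemize}

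If this becomes messy, the fallback is to choose $a' = b$ where $a = p^{j-1}b + c$ with $c \in H + p^{j}G$. Such a decomposition exists because $a \in H + p^{j-1}G$ by minimality of $j$. Then check $\mathfrak{s}_p(b) = H$ directly via \autoref{lem:EasyValuationProperties}(2): $\mathfrak{s}_p(b) = \mathfrak{s}_{p^j}(p^{j-1}b)$, and $\mathfrak{s}_{p^j}(p^{j-1}b) = \mathfrak{s}_{p^j}(a - c)$, which equals $H$ by comparison through \autoref{rmk:TriangleIsIsosceles}.

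\emph{Items (3) and (4).} For (3), item (2) gives $\mathcal{S}_n \subseteq \bigcup_{p \mid n} \mathcal{S}_{p^{e_p}}$. Item (1) gives $\mathcal{S}_{p^r} \subseteq \mathcal{S}_p$. Conversely, every value of $\mathfrak{s}_p$ is realized by $\mathfrak{s}_n$: by \autoref{lem:EasyValuationProperties}(3) and (2), $\mathfrak{s}_p(a) = \mathfrak{s}_p((n/p^{e_p}) a)$, and that element realizes $\mathfrak{s}_p(a)$ as its $\mathfrak{s}_n$-value up to the $p^{e_p}/p$ scaling, $\mathfrak{s}_{p^{e_p}}(p^{e_p-1}x) = \mathfrak{s}_p(x)$.

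For (4), substitute (3) into the definitions. Since $\mathcal{S}_n = \bigcup_{p \mid n} \mathcal{S}_p$, the union defining $\mathfrak{t}_n(a)$ splits as the union over $p \mid n$, which is a maximum because convex subgroups are linearly ordered. Dually, the intersection defining $\mathfrak{t}_n^+(a)$ becomes a minimum.
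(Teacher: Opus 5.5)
Your proof is correct once item (1) is taken to be your ``fallback'' paragraph. Items (2)--(4) follow the paper's route:
\begin{itemize}
\item for (2), the identity $H + nG = \bigcap_{p \mid n}(H + p^{v_p(n)}G)$, obtained from \autoref{lem:CRT} in $G/H$;
\item for (3), \autoref{lem:EasyValuationProperties} together with (1) and (2);
\item for (4), splitting the union and the intersection over $\mathcal{S}_n = \bigcup_{p \mid n}\mathcal{S}_p$.
\end{itemize}

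For (1), your fallback uses the same decomposition as the paper: your $j-1$ is exactly the paper's maximal $s < r$ with $a \in \mathfrak{s}_{p^r}(a) + p^sG$. What differs is how you verify $\mathfrak{s}_p(b) = H$. The paper checks the two defining conditions directly: $b \notin H + pG$, since otherwise $a \in H + p^{s+1}G$; and $b \in H' + pG$ for every convex $H' > H$, by cancelling $p^s$ via \autoref{rmk:ConvexityConsequence}. You instead use $\mathfrak{s}_p(b) = \mathfrak{s}_{p^j}(p^{j-1}b) = \mathfrak{s}_{p^j}(a-c)$ and the isosceles property. This is slightly slicker and reuses lemmas already proved.

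Make the isosceles step explicit:
\begin{itemize}
\item $\mathfrak{s}_{p^j}(a) = H$, because $a \notin H + p^jG$ gives $\geq$, and \autoref{lem:EasyValuationProperties}(1) gives $\mathfrak{s}_{p^j}(a) \leq \mathfrak{s}_{p^r}(a) = H$;
\item $\mathfrak{s}_{p^j}(c) < H$, because $c \in H + p^jG$.
\end{itemize}

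Delete the bulleted plan that precedes the fallback. Its first condition is vacuous: every $k \leq r$ satisfies $a \in H' + p^kG$ for all $H' > H$. Moreover, decomposing $a$ with $h' \in H'$ rather than $h \in H$ is exactly what prevents you from deducing $b' \notin H + pG$ from $a \notin H + p^jG$.

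Finally, in (3), name the witness explicitly as $x = (n/p)a$. For each prime $q \neq p$ dividing $n$, $x \in q^{v_q(n)}G$, so those components of (2) are $\emptyset$. Hence $\mathfrak{s}_n(x) = \mathfrak{s}_{p^{v_p(n)}}\bigl(p^{v_p(n)-1}\cdot (n/p^{v_p(n)})a\bigr) = \mathfrak{s}_p(a)$.
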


\begin{proof}
    (1) Let $s$ be the maximal integer such that $0 \leq s < r$ and $a \in \mathfrak{s}_{p^r}(a) + p^s G$.
    Then $a$ can be expressed as $a = b + p^s a'$ for some $b \in \mathfrak{s}_{p^r}(a)$ and $a' \in G$.
    It follows that $a' \notin \mathfrak{s}_{p^r}(a) + p G$; otherwise, $a = b + p^s a' \in \mathfrak{s}_{p^r}(a) + p^{s+1} G$, which contradicts the maximality of $s$.
    On the other hand, for any $H \Subset G$ greater than $\mathfrak{s}_{p^r}(a)$, we have $b + p^s a' = a \in H + p^{s+1} G$.
    By \autoref{rmk:ConvexityConsequence}, this implies $a' \in H + p G$.
    Therefore, $\mathfrak{s}_p(a') = \mathfrak{s}_{p^r}(a)$.

    (2) By \autoref{lem:EasyValuationProperties} (1), it suffices to show $\mathfrak{s}_n(a) \leq \max_{p \in \mathbb{P}, p \mid n} \left\{ \mathfrak{s}_{p^{v_p(n)}}(a) \right\}$.
    Suppose $H \Subset G$ is greater than $\max_{p \in \mathbb{P}, p \mid n} \left\{ \mathfrak{s}_{p^{v_p(n)}}(a) \right\}$.
    Then $a \in \bigcap_{p \in \mathbb{P}, p \mid n} (H + p^{v_p(n)} G)$, which equals $H + nG$ by applying \autoref{lem:CRT} (1) to $G / H$.
    Hence, $H > \mathfrak{s}_n(a)$.

    (3) It follows from \autoref{lem:EasyValuationProperties}, (1), and (2).

    (4) It is immediate from (3).
\end{proof}

We now define \emph{ribs}, which are used to characterize the distality of ordered abelian groups.

\begin{dfn}[{\cite[Definition 1.26]{Hils2023StaEmb}}]\label{dfn:Ribs}
    For $H \in \mathcal{S}_n \setminus \left\{ \emptyset \right\}$, we define $G_{\leq H}^{\mathfrak{s}_n} = \left\{ a \in G \mid \mathfrak{s}_n(a) \leq H \right\}$ and $G_{< H}^{\mathfrak{s}_n} = \left\{ a \in G \mid \mathfrak{s}_n(a) < H \right\}$.
    These are subgroups of $G$ by \autoref{lem:ValuationAxioms}.
    We define the \emph{rib} at $H$ with respect to $\mathfrak{s}_n$ as the quotient group $R_{H}^{\mathfrak{s}_n} = G_{\leq H}^{\mathfrak{s}_n} / G_{< H}^{\mathfrak{s}_n}$.
    These groups are also defined analogously for $\mathfrak{t}_n$.
\end{dfn}

As explained in the introduction, if $|\mathcal{S}_n| = \infty$, then $|G / nG| = \infty$.
On the other hand, if $|\mathcal{S}_n| < \infty$, then $|G / nG| = \prod_{H \in \mathcal{S}_n \setminus \left\{ \emptyset \right\}} |R_{H}^{\mathfrak{s}_n}|$.

\begin{dfn}[{\cite[Definition 1.4]{Cluckers2011QuaEli}}]\label{dfn:SqBracket}
    For $H \Subset G$ and $m \geq 2$, we define $H^{[m]} = \bigcap_{H' \Subset G, H' > H} (H' + mG)$, which is a subgroup of $G$.
\end{dfn}

\begin{rmk}\label{rmk:ConsequenceOfConvexityConsequence}
    For $a \in G$, $H \Subset G$, and $m, n \geq 2$, it follows from \autoref{rmk:ConvexityConsequence} that $a \in H^{[m]} \iff na \in H^{[nm]}$.
\end{rmk}

\begin{lem}[{\cite[Lemma 2.4]{Cluckers2011QuaEli}}]
    In the setting of \autoref{dfn:SqBracket}, we have $H^{[m]} = \bigcap_{H' \in \mathcal{S}_m, H' > H} (H' + mG)$.
    In particular, for any $n \geq 2$, $\mathfrak{s}_n(a)^{[m]}$ is definable in $L_{\mathrm{oag}}$ uniformly in $G$ and $a$.
\end{lem}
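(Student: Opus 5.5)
We must show $H^{[m]} = \bigcap_{H' \in \mathcal{S}_m, H' > H} (H' + mG)$. The inclusion $\subseteq$ is trivial, since the right-hand side intersects over a subfamily of the convex subgroups $H' > H$ used in \autoref{dfn:SqBracket}. For $\supseteq$, I take $a$ in the right-hand side and an arbitrary convex $H'' > H$, and aim to show $a \in H'' + mG$.

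Suppose instead that $a \notin H'' + mG$. Then $a \notin mG$, so $\mathfrak{s}_m(a) \neq \emptyset$. By maximality in the definition of $\mathfrak{s}_m$, we get $\mathfrak{s}_m(a) \geq H''$: every convex $K \le H''$ satisfies $a \notin K + mG$ because $K + mG \subseteq H'' + mG$. Hence $H' := \mathfrak{s}_m(a) \in \mathcal{S}_m$ and $H' \geq H'' > H$. But then $a \notin H' + mG$ by the definition of $\mathfrak{s}_m(a)$, while $H'$ is one of the indices on the right-hand side, so $a \in H' + mG$. This is a contradiction, and therefore $a \in H'' + mG$ for every convex $H'' > H$, that is, $a \in H^{[m]}$.

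The only point that needs care is that the largest convex subgroup in the definition of $\mathfrak{s}_m(a)$ exists and satisfies $a \notin \mathfrak{s}_m(a) + mG$. This holds because $\mathfrak{s}_m(a)$ is the union of the convex subgroups $K$ with $a \notin K + mG$, and if $a$ lay in that union plus $mG$, it would lie in $K + mG$ for a single such $K$. I take this as given by the definition in the paper. So the main step is the observation that any witness $H''$ can be pushed up to an element of $\mathcal{S}_m$; the rest is routine.

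For the definability statement, put $H = \mathfrak{s}_n(a)$. Then
\[
x \in \mathfrak{s}_n(a)^{[m]} \iff \text{for all } y\, \bigl(\mathfrak{s}_m(y) > \mathfrak{s}_n(a) \rightarrow x \in \mathfrak{s}_m(y) + mG\bigr).
\]
The quantification over $H' \in \mathcal{S}_m$ is thereby replaced by quantification over elements $y$. The condition $\mathfrak{s}_m(y) > \mathfrak{s}_n(a)$ is expressible by comparing two uniformly definable convex sets (uniform definability of $\mathfrak{s}_n$ and $\mathfrak{s}_m$ was shown above), and membership $x \in \mathfrak{s}_m(y) + mG$ is $\exists z\,\exists w\,(z \in \mathfrak{s}_m(y) \wedge x = z + mw)$. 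The case $\mathfrak{s}_n(a) = \emptyset$ is covered by the same formula, since then every $H' \in \mathcal{S}_m \setminus \{\emptyset\}$ is counted, matching $\emptyset < H'$ for all convex $H'$. This gives an $L_{\mathrm{oag}}$-formula that works uniformly in $G$ and $a$.
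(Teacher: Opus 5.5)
Your proof is correct and is essentially the paper's argument: the paper argues contrapositively that if $b \notin H^{[m]}$, a witness $H' > H$ with $b \notin H' + mG$ gives $\mathfrak{s}_m(b) \geq H' > H$ and $b \notin \mathfrak{s}_m(b) + mG$, which is exactly your step of pushing $H''$ up to $\mathfrak{s}_m(a) \in \mathcal{S}_m$. Your explicit formula for the definability clause (quantifying over $y$ with $\mathfrak{s}_m(y) > \mathfrak{s}_n(a)$, including your treatment of the case $\mathfrak{s}_n(a) = \emptyset$) correctly spells out what the paper leaves as an immediate consequence.
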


\begin{proof}
    If $b \notin H^{[m]}$, there exists $H' \Subset G$ such that $H' > H$ and $b \notin H' + mG$.
    Then $\mathfrak{s}_m(b) \geq H' > H$.
    Thus, we have $b \notin \mathfrak{s}_m(b) + mG \supseteq \bigcap_{H' \in \mathcal{S}_m, H' > H} (H' + mG)$.
\end{proof}

\begin{lem}[{\cite[Lemma 2.8]{Cluckers2011QuaEli}}]\label{lem:CoverAndValue}
    \begin{enumerate}
        \item For $H \in \mathcal{S}_n \setminus \left\{ \emptyset \right\}$, $G_{\leq H}^{\mathfrak{s}_n} = H^{[n]}$ and $G_{< H}^{\mathfrak{s}_n} = H + nG$.
        \item For $H \in \mathcal{T}_n \setminus \left\{ \emptyset \right\}$, $G_{\leq H}^{\mathfrak{t}_n} = H^+$ and $G_{< H}^{\mathfrak{t}_n} = H$, where $H^+ = \mathfrak{t}_n^+(a)$ for an arbitrary $a$ such that $H = \mathfrak{t}_n(a)$.
    \end{enumerate}
\end{lem}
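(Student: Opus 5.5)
Both parts amount to unwinding the definitions. The only facts needed are that $\mathcal{S}_n$ is linearly ordered by inclusion and that $\mathfrak{s}_n(a)$ is characterized as the largest convex subgroup $H'$ with $a \notin H' + nG$.

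\emph{Part (1), the identity $G_{<H}^{\mathfrak{s}_n} = H + nG$.} Suppose first that $a \in H + nG$. If $a \in nG$, then $\mathfrak{s}_n(a) = \emptyset < H$. Otherwise, the definition of $\mathfrak{s}_n(a)$ as the largest convex subgroup avoiding $a$ modulo $nG$ forces $\mathfrak{s}_n(a) < H$: indeed, $\mathfrak{s}_n(a) \geq H$ would give $a \notin \mathfrak{s}_n(a) + nG \supseteq H + nG$. Conversely, suppose $\mathfrak{s}_n(a) < H$. If $a \in nG$ we are done. Otherwise $H$ is a convex subgroup strictly larger than $\mathfrak{s}_n(a)$, so by maximality $a \in H + nG$.

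\emph{Part (1), the identity $G_{\leq H}^{\mathfrak{s}_n} = H^{[n]}$.} Suppose first that $\mathfrak{s}_n(a) \leq H$. Then for every $H' > H$ we have $H' > \mathfrak{s}_n(a)$, so $a \in H' + nG$ by maximality, and hence $a \in H^{[n]}$. Conversely, suppose $\mathfrak{s}_n(a) > H$, where we may assume $a \notin nG$. Then $H' = \mathfrak{s}_n(a)$ is a convex subgroup with $H' > H$ and $a \notin H' + nG$, so $a \notin H^{[n]}$. Neither direction uses that $H \in \mathcal{S}_n$ beyond its being convex and nonempty.

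\emph{Part (2).} Write $H = \mathfrak{t}_n(b)$ and $H^+ = \mathfrak{t}_n^+(b)$. By \autoref{lem:tArrangement}, $H^+$ depends only on $H$. The key intermediate claim is that for any $a$ and any $K \in \mathcal{S}_n$,
\[
K \leq \mathfrak{t}_n(a) \iff a \notin K \quad\text{or}\quad K \leq \text{some } K' \in \mathcal{S}_n \text{ with } a \notin K'.
\]
Since $\mathcal{S}_n$ is a chain of convex subgroups, the sets $\{K \in \mathcal{S}_n \mid a \notin K\}$ and $\{K \in \mathcal{S}_n \mid a \in K\}$ form a cut in $\mathcal{S}_n$. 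Moreover, $\mathfrak{t}_n(a)$ is the union of the lower part of this cut and $\mathfrak{t}_n^+(a)$ is the intersection of the upper part. Consequently, $\mathfrak{t}_n(a)$ is determined by this cut, and $a \in \mathfrak{t}_n^+(a)$ while $a \notin \mathfrak{t}_n(a)$ whenever the lower part is nonempty.

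With this in hand, the identity $G_{<H}^{\mathfrak{t}_n} = H$ goes as follows. If $a \in H$, then $a$ lies in some $K \in \mathcal{S}_n$ with $b \notin K$. Hence the cut of $a$ sits strictly below that of $b$, so $\mathfrak{t}_n(a) < H$; the edge case $\mathfrak{t}_n(a) = \emptyset$ is fine. Conversely, if $\mathfrak{t}_n(a) < \mathfrak{t}_n(b)$, then there is some $K \in \mathcal{S}_n$ with $K \subseteq H$ and $a \in K$, since the cuts differ. Hence $a \in H$.

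The identity $G_{\leq H}^{\mathfrak{t}_n} = H^+$ is analogous. The condition $\mathfrak{t}_n(a) \leq H$ is equivalent to $a$ lying in every $K \in \mathcal{S}_n$ that contains $b$, which is exactly $a \in \mathfrak{t}_n^+(b) = H^+$. The convention that the empty intersection is $G$ handles the top case.

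\emph{Main obstacle.} The bookkeeping with cuts in part (2) is the main hazard. Two things need care: distinct cuts can yield the same union, which is why $\mathfrak{t}_n(a) = \mathfrak{t}_n(b)$ must be compared via \autoref{lem:tArrangement}; and the boundary case where the cut has an empty lower part, so that $\mathfrak{t}_n = \emptyset$, must be treated separately.
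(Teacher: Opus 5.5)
Your proposal is correct and is exactly the definition-unwinding the paper intends when it simply calls the proof straightforward. One point deserves explicit mention: in part (2), moving between cuts and their unions (e.g.\ getting strictness in $a \in H \Rightarrow \mathfrak{t}_n(a) < H$, or recovering the cut inclusion from $\mathfrak{t}_n(a) \leq \mathfrak{t}_n(b)$) works because $x \notin \mathfrak{t}_n(x)$ for every $x$, which you do record, and not merely because the cuts are different.
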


The proof is straightforward.
Note that any element in $R_H^{\mathfrak{s}_n} = H^{[n]} / (H + nG)$ vanishes when multiplied by $n$.
In other words, the rib $R_H^{\mathfrak{s}_n}$ can be viewed as a $(\mathbb{Z} / n \mathbb{Z})$-module and thus admits no linear ordering compatible with the group operation.
In contrast, the rib $R_{H}^{\mathfrak{t}_n} = H^+ / H$ is an ordered abelian group due to the convexity of $H$.
This is considerably simpler than the whole $G$:

\begin{dfn}\label{dfn:nRegurality}
    Let $n \geq 2$.
    An ordered abelian group $G$ is \emph{$n$-regular} if the following equivalent conditions hold:
    \begin{enumerate}
        \item Any convex subset containing at least $n$ elements intersects $nG$.
        \item Any convex subset containing at least $n$ elements intersects $nG + a$ for any $a \in G$.
        \item For any $H \Subset G$ larger than $\left\{ 0 \right\}$, the quotient $G / H$ is $n$-divisible.
    \end{enumerate}
\end{dfn}

\begin{proof}[Proof of the equivalence]
    Assume (1).
    For any $a$ and any convex subset $I$ containing $n$ elements, we have $(I - a) \cap nG \neq \emptyset$.
    This establishes (2).
    It is clear that (2) implies (3).
    To show that (3) implies (1), assume (3) and let $I$ be a convex subset containing $n$ elements.
    First, suppose that $G$ is discrete.
    In this case, we may assume $I = \left\{ a,a+1,\dots,a+n-1 \right\}$.
    (3) ensures that there exists $x \in (a + \langle 1 \rangle^{\mathrm{conv}}) \cap nG$.
    Then there exists $z \in \mathbb{Z}$ such that $x \in [a + zn, a + (z+1)n)$, and hence $x - zn \in I \cap nG$.
    Next, suppose that $G$ is dense.
    Choose $a < b$ from $I$.
    By density, we can find $c > 0$ such that $nc < b - a$.
    As in the discrete case, there exists $z \in \mathbb{Z}$ and $x \in [a + znc, a + (z+1)nc) \cap nG$.
    Then $x - znc \in I \cap nG$.
\end{proof}

\begin{lem}\label{lem:nRegurality}
    For every $H \in \mathcal{T}_n \setminus \left\{ \emptyset \right\}$, the rib $H^+ / H$ is $n$-regular.
\end{lem}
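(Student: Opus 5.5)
The plan is to verify condition (3) of \autoref{dfn:nRegurality} for the ordered abelian group $H^+/H$. Here $H = \mathfrak{t}_n(a)$ and $H^+ = \mathfrak{t}_n^+(a)$ for some fixed $a \in G$. The convex subgroups of $H^+/H$ are exactly the $K/H$ with $K \Subset G$ and $H \leq K \leq H^+$, so nontriviality means $H < K$. Since $(H^+/H)/(K/H) \cong H^+/K$, it suffices to show that for every $K \Subset G$ with $H < K \leq H^+$, the group $H^+/K$ is $n$-divisible. Concretely, every $b \in H^+$ should be expressible as $b = k + nc$ with $k \in K$ and $c \in H^+$.

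The key observation is a dichotomy for $\mathcal{S}_n$ coming straight from the definitions of $\mathfrak{t}_n$ and $\mathfrak{t}_n^+$. For any $H' \in \mathcal{S}_n$ other than $\emptyset$, either $a \notin H'$, in which case $H' \subseteq \mathfrak{t}_n(a) = H$, or $a \in H'$, in which case $H' \supseteq \mathfrak{t}_n^+(a) = H^+$. So no element of $\mathcal{S}_n$ lies in the interval strictly between $H$ and $H^+$, nor does any element of $\mathcal{S}_n$ equal $H^+$ while also lying below $H^+$.

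Now take $b \in H^+$ and look at $\mathfrak{s}_n(b)$.
\begin{itemize}
\item If $\mathfrak{s}_n(b) = \emptyset$, then $b \in nG$ and we may take $k = 0$.
\item Otherwise, $b \notin \mathfrak{s}_n(b) + nG$, so in particular $b \notin \mathfrak{s}_n(b)$. Since $b \in H^+$, this forces $\mathfrak{s}_n(b) \neq H^+$ and indeed $\mathfrak{s}_n(b) \not\supseteq H^+$. By the dichotomy, $\mathfrak{s}_n(b) \leq H < K$. By the maximality in the definition of $\mathfrak{s}_n$, we get $b \in K + nG$, so $b = k + nc$ with $k \in K$ and $c \in G$.
\end{itemize}

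Finally we check that $c$ lands in $H^+$. We have $nc = b - k \in H^+$, because $K \leq H^+$ and $b \in H^+$. By \autoref{rmk:ConvexityConsequence}, $nc \in H^+$ implies $c \in H^+$, which gives the desired $n$-divisibility of $H^+/K$.

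The only step needing care is the dichotomy, specifically excluding $\mathfrak{s}_n(b) = H^+$ when $H^+ \in \mathcal{S}_n$. That case is handled by noting $b \notin \mathfrak{s}_n(b)$ while $b \in H^+$. Everything else is routine unwinding of definitions.
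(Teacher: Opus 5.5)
Your proof is correct and is the paper's argument: verify condition (3) by showing $\mathfrak{s}_n(b) \leq H < K$ for $b \in H^+$, which gives $b \in K + nG$. The paper states the key inequality as $\mathfrak{s}_n(b) \leq \mathfrak{t}_n(b) = H$ for $b \in H^+ \setminus H$, which is your dichotomy on $\mathcal{S}_n$ in different words, and you also spell out two steps the paper leaves implicit: that the convex subgroups of $H^+/H$ are the $K/H$, and that $nc \in H^+$ forces $c \in H^+$.
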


\begin{proof}
    Let $H' \Subset H^+$ be larger than $H$ and $a \in H^+ \setminus H$.
    Since $\mathfrak{s}_n(a) \leq \mathfrak{t}_n(a) = H < H'$, we have $a \in H' + nG$.
    This shows the $n$-divisibility of $H^+ / H'$.
\end{proof}

For $p \in \mathbb{P}$ and $H \in \mathcal{S}_p \setminus \left\{ \emptyset \right\}$, we have a chain of subgroups:
\begin{equation*}
    H + pG \subseteq \dots \subseteq H^{[p^3]} + pG \subseteq H^{[p^2]} + pG \subseteq H^{[p]} + pG = H^{[p]}.
\end{equation*}

\begin{lem}\label{lem:LinearlyIndependentElements}
    Let $p \in \mathbb{P}$, $H \in \mathcal{S}_p \setminus \left\{ \emptyset \right\}$, and $r \geq s \geq 1$.
    \begin{enumerate}
        \item For any $\overline{a} = (a_1,\dots,a_k)$ from $H^{[p^r]}$, the following are equivalent:
        \begin{enumerate}
            \item[i.] Their images in $(H^{[p^r]} + pG) / (H + pG)$ are $\mathbb{F}_p$-linearly independent.
            \item[ii.] For any integers $\overline{m} = (m_1,\dots,m_k)$, if $\min \left\{ v_p(m_1),\dots,v_p(m_k) \right\} = 0$, then $\mathfrak{s}_p(\overline{m} \cdot \overline{a}) = H$ ($\overline{m} \cdot \overline{a}$ is the dot product).
            \item[iii.] Their images in $R_H^{\mathfrak{s}_{p^r}} = H^{[p^r]} / (H + p^r G)$ are $(\mathbb{Z} / p^r \mathbb{Z})$-linearly independent.
            \item[iv.] For any integers $\overline{m} = (m_1,\dots,m_k)$, if $\min \left\{ v_p(m_1),\dots,v_p(m_k) \right\} < r$, then $\mathfrak{s}_{p^r}(\overline{m} \cdot \overline{a}) = H$.
        \end{enumerate}
        \item The dimension $\mathrm{dim}_{\mathbb{F}_p}\left( (H^{[p^r]} + pG) / (H + pG) \right)$ equals the maximal cardinality of a $(\mathbb{Z} / p^r \mathbb{Z})$-linearly independent subset of $R_H^{\mathfrak{s}_{p^r}}$.
        \item For any $a_1,\dots,a_k \in H^{[p^r]}$, if their images in $R_H^{\mathfrak{s}_{p^r}}$ are $(\mathbb{Z} / p^r \mathbb{Z})$-linearly independent, then their images in $R_H^{\mathfrak{s}_{p^s}}$ are $(\mathbb{Z} / p^s \mathbb{Z})$-linearly independent.
    \end{enumerate}
\end{lem}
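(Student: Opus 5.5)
The plan is to reduce everything to two elementary observations about elements $b \in H^{[p^r]}$ and then handle the four equivalences in (1) in pairs.

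First, for every $b \in H^{[p^r]}$ we have $\mathfrak{s}_{p^r}(b) \leq H$. Indeed, $b \in H' + p^r G$ for every $H' \Subset G$ with $H' > H$, so no such $H'$ can be $\mathfrak{s}_{p^r}(b)$. Consequently $\mathfrak{s}_{p^r}(b) = H$ if and only if $b \notin H + p^r G$. Since $H' + p^rG \subseteq H' + p^sG$ for $s \leq r$, we also have $H^{[p^r]} \subseteq H^{[p^s]}$. Taking $s = 1$ gives the same statement for $\mathfrak{s}_p$: $\mathfrak{s}_p(b) \leq H$, with equality if and only if $b \notin H + pG$.

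Second, $G/H$ is torsion-free because $H$ is convex (\autoref{rmk:ConvexityConsequence}). Hence if $p^s x \in H + p^r G$ with $s \leq r$, then $x \in H + p^{r-s} G$.

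For (1), the equivalence i$\Leftrightarrow$ii is a restatement of the first observation with $p$ in place of $p^r$. $\mathbb{F}_p$-independence says exactly that $\overline{m}\cdot\overline{a} \notin H + pG$ whenever some $m_i$ is prime to $p$, because combinations in which all $m_i$ are divisible by $p$ vanish modulo $pG$. In the same way, iii$\Leftrightarrow$iv is the first observation for $p^r$: $(\mathbb{Z}/p^r\mathbb{Z})$-independence says $\overline{m}\cdot\overline{a} \notin H + p^r G$ unless all $m_i \equiv 0 \pmod{p^r}$, that is, whenever $\min_i v_p(m_i) < r$.

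The real content is ii$\Leftrightarrow$iv, which I expect to be the only non-formal step; the key is the second observation.
\begin{itemize}
\item For ii$\Rightarrow$iv, take $\overline{m}$ with $s = \min_i v_p(m_i) < r$ and write $\overline{m} = p^s \overline{m}'$, where some $m'_i$ is prime to $p$. By ii, $\overline{m}'\cdot\overline{a} \notin H + pG$. If $p^s(\overline{m}'\cdot\overline{a}) \in H + p^rG$, torsion-freeness of $G/H$ gives $\overline{m}'\cdot\overline{a} \in H + p^{r-s}G \subseteq H + pG$, a contradiction.
\item For iv$\Rightarrow$ii, suppose $\min_i v_p(m_i) = 0$ but $\overline{m}\cdot\overline{a} \in H + pG$. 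Multiplying by $p^{r-1}$ gives $(p^{r-1}\overline{m})\cdot\overline{a} \in H + p^rG$. But $\min_i v_p(p^{r-1}m_i) = r-1 < r$, which contradicts iv.
\end{itemize}

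For (2), the quotient $(H^{[p^r]}+pG)/(H+pG)$ is spanned by images of elements of $H^{[p^r]}$, so a basis can be chosen with representatives in $H^{[p^r]}$. Any $(\mathbb{Z}/p^r\mathbb{Z})$-independent subset of $R_H^{\mathfrak{s}_{p^r}}$ likewise lifts to $H^{[p^r]}$. Independence of a family is decided on its finite subfamilies, so i$\Leftrightarrow$iii shows that a family of representatives in $H^{[p^r]}$ is $\mathbb{F}_p$-independent in the first quotient exactly when it is $(\mathbb{Z}/p^r\mathbb{Z})$-independent in $R_H^{\mathfrak{s}_{p^r}}$. Therefore the supremum of the sizes of independent sets is the same on both sides, and that supremum equals the dimension on the $\mathbb{F}_p$ side.

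For (3), by (1) for $r$, the images of the $a_i$ are $\mathbb{F}_p$-independent in $(H^{[p^r]}+pG)/(H+pG)$. This is a subspace of $(H^{[p^s]}+pG)/(H+pG)$ because $H^{[p^r]} \subseteq H^{[p^s]}$, so the images remain independent there. Since $a_i \in H^{[p^s]}$, applying (1) for $s$ yields $(\mathbb{Z}/p^s\mathbb{Z})$-independence in $R_H^{\mathfrak{s}_{p^s}}$.
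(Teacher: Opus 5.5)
Your proposal is correct and follows essentially the same route as the paper. Your first observation is Lemma~\ref{lem:CoverAndValue}~(1), which the paper cites for i$\Leftrightarrow$ii and iii$\Leftrightarrow$iv, and your torsion-freeness step is Remark~\ref{rmk:ConvexityConsequence}, which the paper uses for ii$\Leftrightarrow$iv in the packaged form $\mathfrak{s}_p(x) = \mathfrak{s}_{p^r}(p^{r-1}x)$ (Lemma~\ref{lem:EasyValuationProperties}~(2)). Like the paper, you then read off (2) and (3) from (1); your division by $p^{s}$ with $s = \min_i v_p(m_i) < r$ in ii$\Rightarrow$iv spells out a step the paper leaves implicit.
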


\begin{proof}
    (1) The equivalence of (i) and (ii), and (iii) and (iv) follows immediately from \autoref{lem:CoverAndValue} (1).
    By \autoref{lem:EasyValuationProperties} (2), we have $\mathfrak{s}_p(\overline{m} \cdot \overline{a}) = \mathfrak{s}_{p^r}(p^{r-1} (\overline{m} \cdot \overline{a}))$ for integers $\overline{m}$.
    Thus, the equivalence of (ii) and (iv) follows.

    (2) follows directly from (1), noting that any element in $(H^{[p^r]} + pG) / (H + pG)$ can be represented by an element of $H^{[p^r]}$.
    (3) also follows immediately from (1).
\end{proof}
\subsection{Examples}
\label{sec:ReviewExamples}

In this section, we present several illustrative examples, adapted from \cite[Section 4]{Cluckers2011QuaEli}.
The first example demonstrates that for any linear order $I$ and any $n \geq 2$, there exists an ordered abelian group $G$ such that $\mathcal{S}_n \cong (-\infty) + I$.
Furthermore, for each $H \in \mathcal{S}_n \setminus \left\{ \emptyset \right\}$, we can specify $|R_H^{\mathfrak{s}_n}|$ to be an arbitrary power of $n$ or $\infty$.

\begin{ex}\label{ex:ArbitrarySn}
    Fix $n \geq 2$ and a non-empty linearly ordered set $I$.
    For each positive integer $k$, choose an archimedean ordered abelian group $G_k$ such that $|G_k / nG_k| = n^k$
    (e.g., the multiplicative group $\left\{ p_1^{z_1} \dots p_k^{z_k} \mid z_i \in \mathbb{Z} \right\}$, where $p_i$ is the $i$-th prime).
    Additionally, let $G_{\infty}$ be an archimedean ordered abelian group such that $|G_{\infty} / nG_{\infty}| = \infty$, such as the multiplicative group of positive rational numbers.

    Let $\xi:I \to \mathbb{Z}_{\geq 1} \cup \left\{ \infty \right\}$ be a function, and define $G = \bigoplus_{i \in I} G_{\xi(i)}$ equipped with the reverse lexicographical order.
    That is, for $a = (a_i)_{i \in I} \in G$, we have $a > 0$ if and only if there exists $i \in I$ such that $a_i > 0$ and $a_j = 0$ for all $j > i$.
    Since each $G_k$ is archimedean, every convex subgroup of $G$ is of the form
    \begin{equation*}
        \bigoplus_{i \in I_1} G_{\xi(i)} \oplus \bigoplus_{i \in I_2} \left\{ 0 \right\},
    \end{equation*}
    where $(I_1,I_2)$ is a cut of $I$.

    For $a = (a_i)_{i \in I} \in G \setminus nG$, let $i_0 \in I$ be the index such that $a_{i_0} \notin nG_{\xi(i_0)}$ and $a_i \in nG_{\xi(i)}$ for all $i > i_0$.
    It follows that $\mathfrak{s}_n(a) = \bigoplus_{i<i_0} G_{\xi(i)} \oplus \bigoplus_{i\geq i_0} \left\{ 0 \right\}$.
    Consequently, $\mathcal{S}_n$ is isomorphic to $( -\infty ) + I$ as an ordered set.
    Identifying $i \in I$ with its corresponding element in $\mathcal{S}_n$, we obtain:
    \begin{gather*}
        G_{\leq i}^{\mathfrak{s}_n} = \bigoplus_{j\leq i} G_{\xi(j)} \oplus \bigoplus_{j>i} nG_{\xi(j)}, \text{ and}\\
        G_{< i}^{\mathfrak{s}_n} = \bigoplus_{j<i} G_{\xi(j)} \oplus \bigoplus_{j\geq i} nG_{\xi(j)}.
    \end{gather*}
    Thus, $R_i^{\mathfrak{s}_n} \cong G_{\xi(i)} / nG_{\xi(i)}$, and $|R_i^{\mathfrak{s}_n}| = n^{\xi(i)}$, where $n^{\infty}$ is understood to be $\infty$.
\end{ex}

Recall that every $H \in \mathcal{S}_p \setminus \left\{ \emptyset \right\}$ induces a chain of subgroups
\begin{equation*}
    H + pG \subseteq \dots \subseteq H^{[p^3]} + pG \subseteq H^{[p^2]} + pG \subseteq H^{[p]} + pG = H^{[p]}.
\end{equation*}
The next example demonstrates that the cardinalities $(|(H^{[p^n]} + pG) / (H^{[p^{n+1}]} + pG)|)_{n \geq 1}$ can be prescribed arbitrarily.

\begin{ex}
    Fix $p \in \mathbb{P}$, $m \geq 1$, and an archimedean ordered abelian group $K$.
    We denote by $K^{(\mathbb{Z})}$ the direct sum $\bigoplus_{i \in \mathbb{Z}} K$ equipped with the reverse lexicographical order.
    Let $G = \left\{ (a_i)_{i \in \mathbb{Z}} \in K^{(\mathbb{Z})} \;\middle|\; \sum_{i \in \mathbb{Z}} a_i \in p^m K \right\}$.
    Then $pG = \left\{ (a_i) \in (pK)^{(\mathbb{Z})} \;\middle|\; \sum_{i \in \mathbb{Z}} a_i \in p^{m+1} K \right\}$.
    We show that for every $n \geq 1$, $\left\{ 0 \right\}^{[p^n]} = \left\{ (a_i) \in (p^n K)^{(\mathbb{Z})} \;\middle|\; \sum_{i \in \mathbb{Z}} a_i \in p^m K \right\}$.
    The left-to-right inclusion is immediate.
    Let $(a_i)$ be in the right-hand side.
    Choose an arbitrarily small $i_0 \in \mathbb{Z}$, and consider the convex subgroup $H = \left( \bigoplus_{i \leq i_0} K \oplus \bigoplus_{i > i_0} \left\{ 0 \right\} \right) \cap G$.
    Setting $(b_i) = (- \delta_{i,i_0} \sum_{j \in \mathbb{Z}} a_j)$ ($\delta_{ij}$ is the Kronecker delta), we have $(b_i) \in H$ and $(a_i) + (b_i) \in p^n G$.
    Therefore, $(a_i) \in \left\{ 0 \right\}^{[p^n]}$.
    It follows that
    \begin{equation*}
        \left\{ 0 \right\}^{[p^n]} + pG = \begin{cases}
            \left\{ (a_i) \in (p K)^{(\mathbb{Z})} \;\middle|\; \sum_{i \in \mathbb{Z}} a_i \in p^m K \right\} & (n \leq m)\\
            pG & (m < n)
        \end{cases}.
    \end{equation*}
    Indeed, if $n \leq m$ and $(a_i)$ is in the right-hand side, choose $i_0$ which is less than the support of $(a_i)$ and let $(b_i) = (- \delta_{i,i_0} \sum_{j \in \mathbb{Z}} a_j)$.
    We have $(b_i) \in \left\{ 0 \right\}^{[p^n]}$ and $(a_i) + (b_i) \in p G$.
    As a consequence, we obtain the isomorphism $\left( \left\{ 0 \right\}^{[p^m]} + pG \right) / \left( \left\{ 0 \right\}^{[p^{m+1}]} + pG \right) \cong K / p K$, which is induced by the map $K \to \left\{ 0 \right\}^{[p^m]} + pG$ sending $a \mapsto (\delta_{0,i} p^m a)$.

    More generally, assume that $1 \leq m_1 < m_2$ and that the archimedean ordered abelian group $K$ is decomposed as $K_1 \oplus K_2$ as pure groups (i.e. without considering the ordering).
    For instance, choosing $\mathbb{Q}$-linearly independent $r_1,r_2 \in \mathbb{R}$, we may take $K = r_1 \mathbb{Z} + r_2 \mathbb{Z} \cong \mathbb{Z} \oplus \mathbb{Z}$.
    This time we let $G = \left\{ (a_i,b_i) \in (K_1 \oplus K_2)^{(\mathbb{Z})} \;\middle|\; \sum_{i \in \mathbb{Z}} a_i \in p^{m_1} K_1 \text{ and } \sum_{i \in \mathbb{Z}} b_i \in p^{m_2} K_2 \right\}$.
    By arguing as above, we find that $\left\{ 0 \right\}^{[p^n]} + pG$ equals
    \begin{equation*}
        \left\{ (a_i,b_i) \in (pK_1 \oplus pK_2)^{(\mathbb{Z})} \;\middle|\; \sum_{i \in \mathbb{Z}} a_i \in p^{s_1} K_1 \text{ and } \sum_{i \in \mathbb{Z}} b_i \in p^{s_2} K_2 \right\},
    \end{equation*}
    where
    \begin{equation*}
        (s_1,s_2) = \begin{cases}
            (m_1,m_2) & (1 \leq n \leq m_1)\\
            (m_1+1,m_2) & (m_1 < n \leq m_2)\\
            (m_1+1,m_2+1) & (m_2 < n)
        \end{cases}.
    \end{equation*}
    It then immediately follows $\left( \left\{ 0 \right\}^{[p^{m_1}]} + pG \right) / \left( \left\{ 0 \right\}^{[p^{m_1+1}]} + pG \right) \cong K_1 / p K_1$ and that $\left( \left\{ 0 \right\}^{[p^{m_2}]} + pG \right) / \left( \left\{ 0 \right\}^{[p^{m_2+1}]} + pG \right) \cong K_2 / p K_2$.
    The same argument applies to any finite sequence $1 \leq m_1 < \dots < m_k$.
\end{ex}

See \cite[Chapter 5]{Schmitt1982ModThe} for further examples.
\subsection{Relative quantifier elimination}
\label{sec:ReviewSyn}

Cluckers and Halupczok \cite{Cluckers2011QuaEli} introduced two many-sorted languages, $L_{\mathrm{qe}}$ and $L_{\mathrm{syn}}$.
In addition to the main sort, both languages include auxiliary sorts for $\mathcal{S}_p$, $\mathcal{T}_p$, and $\mathcal{T}_p^+$ for all $p \in \mathbb{P}$.
They proved that quantifiers on the main sort are eliminable in both $L_{\mathrm{qe}}$ and $L_{\mathrm{syn}}$, and that every formula can be transformed into ``family union form''.
This quantifier elimination is relative to the auxiliary sorts, which is optimal because the auxiliary sorts can define any given linear order (as seen in \autoref{ex:ArbitrarySn}).

While Aschenbrenner et al. \cite{Aschenbrenner2022DisVal} provided a distality criterion for ordered abelian groups with finite spines using $L_{\mathrm{qe}}$, we rely on relative quantifier elimination in $L_{\mathrm{syn}}$, which explicitly incorporates the valuations $\mathfrak{s}_p$ and $\mathfrak{t}_p$.
This choice is convenient, since our proof of (non-)distality mainly focuses on the behavior of elements with respect to those valuations.

\begin{dfn}[{\cite[Definition 1.12]{Cluckers2011QuaEli}}]\label{dfn:Lsyn}
    The sorts of the language $L_{\mathrm{syn}}$ consist of the main sort $G$ and the collection of auxiliary sorts $\mathcal{A} = \left\{ \mathcal{S}_p, \mathcal{T}_p, \mathcal{T}_p^+ \mid p \in \mathbb{P} \right\}$.
    The symbols of $L_{\mathrm{syn}}$ are as follows (see \autoref{sec:Notation} for the definition of $\equiv_m$ and $k_H$):
    \begin{enumerate}
        \item $0$, $+$, $-$, $<$, and $\equiv_m$ for each $m \geq 2$ on $G$.
        \item $\leq$ on each $(\alpha,\beta) \in \mathcal{A} \times \mathcal{A}$.
        \item A unary predicate $\mathrm{discr}(x)$ on $\mathcal{S}_p$ for each $p \in \mathbb{P}$, where $\mathrm{discr}(H)$ holds if and only if $G / H$ is discrete.
        \item A unary predicate $\mathrm{zero}(x)$ on each $\alpha \in \mathcal{A}$, where $\mathrm{zero}(H)$ holds if and only if $H = \left\{ 0 \right\}$.\footnote{This predicate was not included in the original definition in \cite{Cluckers2011QuaEli} because $\mathfrak{s}_2(0)$ was defined to be $\left\{ 0 \right\}$ therein. Consequently, the condition $H = \left\{ 0 \right\}$ could be expressed as $(H \geq \mathfrak{s}_2(0)) \land (H \leq \mathfrak{s}_2(0))$.}
        \item Two unary predicates on $\mathcal{S}_p$ for each $p \in \mathbb{P}$, each $s \geq 1$, and each $l \geq 0$ which indicate whether $\mathrm{dim}_{\mathbb{F}_p}\left( (H^{[p^s]} + pG) / (H^{[p^{s+1}]} + pG) \right) = l$ or $\mathrm{dim}_{\mathbb{F}_p}\left( (H^{[p^s]} + pG) / (H + pG) \right) = l$, respectively.
        \item Valuations $\mathfrak{s}_{p^r}:G \to \mathcal{S}_p$ and $\mathfrak{t}_p:G \to \mathcal{T}_p$ for each $p \in \mathbb{P}$ and each $r \geq 1$ (note that $\mathcal{S}_{p^r} = \mathcal{S}_p$).
        \item A unary predicate $x =_{\bullet} k_{\bullet}$ on $G$ for each $k \in \mathbb{Z} \setminus \left\{ 0 \right\}$, meaning there exists $H \Subset G$ such that $G / H$ is discrete and $x / H = k_H$.
        \item A unary predicate $x \equiv_{m,\bullet} k_{\bullet}$ on $G$ for each $m \geq 2$ and each $1 \leq k < m$, meaning there exists $H \Subset G$ such that $G / H$ is discrete and $x / H \equiv_m k_H$.
        \item A unary predicate $D_{p^r}^{[p^s]}(x)$ on $G$ for each $p \in \mathbb{P}$ and each $s \geq r \geq 1$, meaning there exists $H \Subset G$ such that $x \in H^{[p^s]} + p^r G$ and $x \notin H + p^r G$.
    \end{enumerate}
\end{dfn}

The convex subgroup $H$ in (7), (8), and (9) is unique if it exists:

\begin{lem}[{\cite[Lemma 2.12]{Cluckers2011QuaEli}}]\label{lem:UniqueWitness}
    Let $a \in G$, $H \Subset G$, $m \geq 2$, $k \in \mathbb{Z} \setminus \left\{ 0 \right\}$, $p \in \mathbb{P}$, and $s \geq r \geq 1$.
    \begin{enumerate}
        \item If $H$ witnesses $a =_{\bullet} k_{\bullet}$, then $H = \mathfrak{t}_2(a)$.
        \item If $m \nmid k$ and $H$ witnesses $a \equiv_{m,\bullet} k_{\bullet}$, then $H = \mathfrak{s}_m(a)$.
        \item If $H$ witnesses $D_{p^r}^{[p^s]}(a)$, then $H = \mathfrak{s}_{p^r}(a)$.
    \end{enumerate}
\end{lem}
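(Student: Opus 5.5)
All three parts follow the same pattern. In each case we first show that the witness $H$ is a lower bound for the relevant valuation, by exhibiting a non-membership at level $H$. We then show that every strictly larger convex subgroup $H'>H$ already ``absorbs'' $a$. The one structural input for (1) and (2) is the following fact. If $G/H$ is discrete with least positive element $1_H$, then every nontrivial convex subgroup of $G/H$ contains $1_H$. Moreover, the convex subgroup generated by $1_H$ is exactly $\mathbb{Z}1_H$: an element strictly between $j_H$ and $(j+1)_H$ would give a positive element below $1_H$.

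For (3), suppose $a\in H^{[p^s]}+p^rG$ and $a\notin H+p^rG$. The second condition gives $a\notin H+p^rG$, hence $\mathfrak{s}_{p^r}(a)\geq H$. Now let $H'\Subset G$ with $H'>H$. By \autoref{dfn:SqBracket} we have $H^{[p^s]}\subseteq H'+p^sG\subseteq H'+p^rG$, the last inclusion because $s\geq r$. So $a\in H'+p^rG$, and since $H'>H$ was arbitrary, $\mathfrak{s}_{p^r}(a)\leq H$. Hence $H=\mathfrak{s}_{p^r}(a)$.

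For (2), suppose $G/H$ is discrete and $a/H\equiv_m k_H$ with $m\nmid k$. Fix a lift $b\in G$ of $1_H$, so that $a=kb+h+mg$ for some $h\in H$ and $g\in G$.
\begin{itemize}
\item First we show $a\notin H+mG$. Otherwise $k_H\in m(G/H)$, say $k_H=mc$ with $c\in G/H$. Since $mc\in\mathbb{Z}1_H$, which is convex, convexity forces $c\in\mathbb{Z}1_H$ (using \autoref{rmk:ConvexityConsequence} in $G/H$). Writing $c=j_H$ gives $mj=k$, contradicting $m\nmid k$.
\item Next let $H'>H$. Then $H'/H$ is a nontrivial convex subgroup of $G/H$, so it contains $1_H$, and we may take $b\in H'$. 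Thus $a=kb+h+mg\in H'+mG$.
\end{itemize}
These two facts together give $\mathfrak{s}_m(a)=H$.

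For (1), suppose $G/H$ is discrete and $a/H=k_H$ with $k\neq0$. Take a lift $b$ of $1_H$.
\begin{itemize}
\item We show $H\in\mathcal{S}_2$ by proving $\mathfrak{s}_2(b)=H$. The argument of (2), applied with $m=2$ and $k=1$, does exactly this: $1_H\notin 2(G/H)$, and $b\in H'$ for every $H'>H$.
\item We have $a\notin H$ because $k_H\neq0$. Since $H\in\mathcal{S}_2$, this means $H$ occurs in the union defining $\mathfrak{t}_2(a)$, so $H\leq\mathfrak{t}_2(a)$.
\item Conversely, let $H'\in\mathcal{S}_2$ with $H'>H$. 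Then $H'/H\ni 1_H$, so $a\in \mathbb{Z}b+H\subseteq H'$. So every $H'\in\mathcal{S}_2$ with $a\notin H'$ satisfies $H'\leq H$, which gives $\mathfrak{t}_2(a)\leq H$.
\end{itemize}
Hence $\mathfrak{t}_2(a)=H$.

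The only point needing care is the convexity argument used in (2) and in the first step of (1). It shows that $k_H$ is not divisible by $m$ in $G/H$, using that $\mathbb{Z}1_H$ is a convex copy of $\mathbb{Z}$; everything else is unwinding the definitions.
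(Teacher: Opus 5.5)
Your proof is correct and follows the paper's argument essentially step for step. For (1), a lift $b$ of $1_H$ witnesses $H\in\mathcal{S}_2$, and every larger element of $\mathcal{S}_2$ contains $a$. For (2), you use that $k_H\notin m(G/H)$ and that $1_H\in H'/H$ for every $H'>H$. For (3), you use $H^{[p^s]}+p^rG\subseteq H'+p^rG$ for all $H'>H$, which is the paper's inclusion $H^{[p^s]}+p^rG\subseteq H^{[p^r]}$ unpacked. You also justify two points the paper only asserts: that $k_H\notin m(G/H)$, via convexity of $\mathbb{Z}1_H$, and that every nontrivial convex subgroup of $G/H$ contains $1_H$.
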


\begin{proof}
    (1) Let $b \in G$ satisfy $b / H = 1_H$.
    Then $b \notin H + 2G$, and for any $H' \Subset G$ larger than $H$, we have $b \in H'$.
    It follows that $H = \mathfrak{s}_2(b) \in \mathcal{S}_2$.
    Since $a \notin H$ and $a \in H'$ for any $H' \in \mathcal{S}_2$ with $H' > H$, $\mathfrak{t}_2(a)$ must equal $H$.

    (2) Since $k_H \notin m(G / H)$, it follows that $a / H \notin m(G / H)$ and thus $a \notin H + mG$.
    On the other hand, $k_H \in H' / H$ for any $H' \Subset G$ with $H' > H$, which implies $a \in H' + mG$.
    Therefore, $\mathfrak{s}_m(a) = H$.

    (3) It is immediate because $H^{[p^s]} + p^r G \subseteq H^{[p^r]}$.
\end{proof}

In particular, these three predicates are definable in $L_{\mathrm{oag}}$.
Consequently, any first-order statement in $L_{\mathrm{syn}}$ can be expressed in $L_{\mathrm{oag}}$.

Note that $x \equiv_m y \leftrightarrow \bigwedge_{p \in \mathbb{P}, p \mid m} x \equiv_{p^{v_p(m)}} y$ by \autoref{lem:CRT} (1).
For the predicate $x \equiv_{m,\bullet} k_{\bullet}$, we have the following decomposition:

\begin{lem}\label{lem:prIsEnough}
    Let $a \in G$, $m \geq 2$, and $1 \leq k < m$.
    We define $P_{\mathrm{n}} = \left\{ p \in \mathbb{P} \;\big|\; p \mid m, v_p(m) > v_p(k) \right\}$ and $P_{\mathrm{d}} = \left\{ p \in \mathbb{P} \;\big|\; p \mid m, v_p(m) \leq v_p(k) \right\}$.
    Then, $a \equiv_{m,\bullet} k_{\bullet}$ if and only if:
    \begin{equation*}
         \bigwedge_{p \in P_{\mathrm{n}}} \left( a \equiv_{p^{v_p(m)},\bullet} k_{\bullet} \right) \land \bigwedge_{p,q \in P_{\mathrm{n}}, r \in P_{\mathrm{d}}} \left( \mathfrak{s}_{p^{v_p}(m)}(a) = \mathfrak{s}_{q^{v_q(m)}}(a) > \mathfrak{s}_{r^{v_r(m)}}(a) \right).
    \end{equation*}
\end{lem}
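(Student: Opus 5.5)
The plan is to fix the prime factorisation $m = \prod_{p \mid m} p^{e_p}$, with $e_p = v_p(m)$, and reduce everything to one convex subgroup $H$ at a time. For a fixed $H \Subset G$ with $G/H$ discrete, \autoref{lem:CRT} (1) applied in the quotient $G/H$ (iterated over the prime powers) gives
\begin{equation*}
a/H \equiv_m k_H \iff a/H \equiv_{p^{e_p}} k_H \text{ for every prime } p \mid m .
\end{equation*}
The two classes of primes behave differently, and this is the whole content of the lemma.

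For $r \in P_{\mathrm{d}}$ we have $r^{e_r} \mid k$, so $k_H \in r^{e_r}(G/H)$. Hence the condition $a/H \equiv_{r^{e_r}} k_H$ just says $a \in H + r^{e_r}G$. By the definition of $\mathfrak{s}_{r^{e_r}}$ (as the largest convex subgroup $H'$ with $a \notin H' + r^{e_r}G$, and with our convention $\mathfrak{s} = \emptyset$ for $a \in r^{e_r}G$), this is equivalent to $\mathfrak{s}_{r^{e_r}}(a) < H$. To check this, note that if $H \le \mathfrak{s}_{r^{e_r}}(a)$, then $H + r^{e_r}G \subseteq \mathfrak{s}_{r^{e_r}}(a) + r^{e_r}G$, which does not contain $a$.

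For $p \in P_{\mathrm{n}}$ we have $p^{e_p} \nmid k$. Then \autoref{lem:UniqueWitness} (2) says that any witness of $a \equiv_{p^{e_p},\bullet} k_\bullet$ must equal $\mathfrak{s}_{p^{e_p}}(a)$. Note also that $P_{\mathrm{n}} \neq \emptyset$, since $1 \le k < m$ forces $m \nmid k$.

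($\Rightarrow$) Let $H$ witness $a \equiv_{m,\bullet} k_\bullet$. By the displayed equivalence, $H$ also witnesses $a \equiv_{p^{e_p},\bullet} k_\bullet$ for each $p \in P_{\mathrm{n}}$. By uniqueness, $H = \mathfrak{s}_{p^{e_p}}(a)$ for all such $p$, so these values coincide. For $r \in P_{\mathrm{d}}$, the discussion above gives $\mathfrak{s}_{r^{e_r}}(a) < H$.

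($\Leftarrow$) Let $H$ be the common value $\mathfrak{s}_{p^{e_p}}(a)$ for $p \in P_{\mathrm{n}}$. Each conjunct $a \equiv_{p^{e_p},\bullet} k_\bullet$ has some witness, and by \autoref{lem:UniqueWitness} (2) that witness is $H$. Hence $G/H$ is discrete and $a/H \equiv_{p^{e_p}} k_H$ for all $p \in P_{\mathrm{n}}$. For $r \in P_{\mathrm{d}}$, the hypothesis $\mathfrak{s}_{r^{e_r}}(a) < H$ gives $a \in H + r^{e_r}G$, hence $a/H \equiv_{r^{e_r}} 0 \equiv_{r^{e_r}} k_H$. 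Recombining via \autoref{lem:CRT} (1) in $G/H$ yields $a/H \equiv_m k_H$, so $H$ witnesses $a \equiv_{m,\bullet} k_\bullet$.

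The only delicate point is the bookkeeping: all the prime-power conditions must refer to the same $H$, since $k_H$ depends on $H$. This is exactly what the uniqueness in \autoref{lem:UniqueWitness} and the equality and inequality constraints on the $\mathfrak{s}$-values guarantee. Nothing beyond the cited lemmas should be needed.
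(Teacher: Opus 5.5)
Your proof is correct and follows essentially the same route as the paper's: you split the congruence in $G/H$ into prime-power congruences via \autoref{lem:CRT} (1), use \autoref{lem:UniqueWitness} (2) to identify the witness with $\mathfrak{s}_{p^{v_p(m)}}(a)$ for $p \in P_{\mathrm{n}}$, and observe that for $r \in P_{\mathrm{d}}$ the congruence collapses to $a \in H + r^{v_r(m)}G$, i.e.\ $\mathfrak{s}_{r^{v_r(m)}}(a) < H$. The differences are cosmetic (the paper fixes $H = \mathfrak{s}_m(a)$ in the forward direction rather than an arbitrary witness), and, like the paper, you read the second conjunction as forcing a common value of $\mathfrak{s}_{p^{v_p(m)}}(a)$ over $p \in P_{\mathrm{n}}$ even when $P_{\mathrm{d}} = \emptyset$; there the triple-indexed conjunction is literally vacuous, so this intended reading is genuinely needed.
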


\begin{proof}
    Suppose $a \equiv_{m,\bullet} k_{\bullet}$ and let $H = \mathfrak{s}_m(a)$.
    This implies $a / H - k_H \in p^{v_p(m)} (G / H)$ for any prime $p \mid m$.
    In particular, $a \equiv_{p^{v_p(m)},\bullet} k_{\bullet}$, and thus $H = \mathfrak{s}_{p^{v_p(m)}}(a)$ for $p \in P_{\mathrm{n}}$.
    For $p \in P_{\mathrm{d}}$, we have $a \in H + p^{v_p(m)} G$, which implies $\mathfrak{s}_{p^{v_p(m)}}(a) < H$.

    We now show the converse.
    Let $H = \mathfrak{s}_{p^{v_p(m)}}(a)$ for some $p \in P_{\mathrm{n}}$ (note that $P_{\mathrm{n}} \neq \emptyset$).
    Then $a / H - k_H \in p^{v_p(m)} (G / H)$ for any prime $p \mid m$, regardless of whether $p$ is in $P_{\mathrm{n}}$ or $P_{\mathrm{d}}$.
    Thus, $a / H - k_H \in m(G / H)$, which means $a \equiv_{m,\bullet} k_{\bullet}$.
\end{proof}

\begin{lem}\label{lem:AddingInfinitesimal}
    Let $a,b \in G$ and $v \in \left\{ \mathfrak{s}_{p^r}, \mathfrak{t}_p \mid p \in \mathbb{P}, r \geq 1 \right\}$.
    If $v(a) > v(b)$, then:
    \begin{itemize}
        \item $v(a + b) = v(a)$.
        \item If $v = \mathfrak{t}_2$ (resp. $\mathfrak{s}_{p^r}$), then $P(a + b) \iff P(a)$, where $P(x)$ is the predicate $x =_{\bullet} k_{\bullet}$ (resp. $x \equiv_{p^r,\bullet} k_{\bullet}$ or $D_{p^r}^{[p^s]}(x)$ for any $s \geq r$).
    \end{itemize}
\end{lem}

\begin{proof}
    The first assertion follows from \autoref{rmk:TriangleIsIsosceles}.
    The second is also immediate from the definitions and \autoref{lem:UniqueWitness}.
\end{proof}

We now present the relative quantifier elimination result by Cluckers and Halupczok.
The following statement, which is sufficient for our arguments on distality, is slightly weaker than the original.

\begin{thm}[{\cite[Theorem 1.13]{Cluckers2011QuaEli}}]\label{thm:RelativeQE}
    Let $\overline{x}$ be $G$-variables and $\overline{\eta}$ be $\mathcal{A}$-variables.
    In the theory of ordered abelian groups, every $L_{\mathrm{syn}}$-formula $\phi(\overline{x},\overline{\eta})$ is equivalent to an $L_{\mathrm{syn}}$-formula of the form:
    \begin{equation*}
        \bigvee_{i=1}^{k} \exists \overline{\theta}\left( \xi_i(\overline{\eta},\overline{\theta}) \land \psi_i(\overline{x},\overline{\theta}) \right),
    \end{equation*}
    where $\overline{\theta}$ are $\mathcal{A}$-variables, each $\xi_i$ involves only $\mathcal{A}$-sort symbols, and each $\psi_i$ is a conjunction of atomic and negated atomic formulas.
\end{thm}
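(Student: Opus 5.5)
The most economical route is to deduce the statement from the original \cite[Theorem 1.13]{Cluckers2011QuaEli}. That theorem says every $L_{\mathrm{syn}}$-formula is equivalent, modulo the theory of ordered abelian groups, to one in ``family union form'': a finite disjunction of formulas $\exists \overline{\theta}\,\bigl(\xi_i(\overline{\eta},\overline{\theta}) \land \psi_i(\overline{x},\overline{\theta})\bigr)$. Here $\overline{\theta}$ ranges over auxiliary sorts, $\xi_i$ lives purely in $\mathcal{A}$, and $\psi_i$ is a conjunction of literals. In Cluckers--Halupczok these literals are additionally required to have a special shape (each main-sort term is related to a single $\theta_j$, and the family of sets defined for fixed $\overline{\theta}$ is pairwise disjoint). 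So I would check that forgetting these extra requirements leaves exactly our statement. I would also check one compatibility point: the conventions of this paper differ from theirs in that $\mathfrak{s}_n(a)=\emptyset$ for $a\in nG$, and we have added the predicate $\mathrm{zero}$. For this I would translate atomic formulas back and forth. The old value $\{0\}$ of $\mathfrak{s}_p$ on $pG$ is expressed via $x \equiv_p 0$ together with $\mathrm{zero}$. Conversely, $\emptyset$ is the minimum of $\mathcal{S}_p$, so comparisons with it are quantifier-free in $\mathcal{A}$.

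If instead a self-contained argument is wanted, I would prove relative quantifier elimination by the standard reduction to eliminating one existential quantifier $\exists x$ from a conjunction of literals. First normalize all main-sort terms to the form $kx + t(\overline{y})$ with $k \in \mathbb{Z}$. Next, replace $\equiv_m$, $\equiv_{m,\bullet}$ and $\mathfrak{s}_n$ by their prime-power components, using \autoref{lem:CRT}, \autoref{lem:prIsEnough} and \autoref{lem:DifficultValuationProperties}. Then multiply through so that $x$ occurs with one common coefficient $k$. By \autoref{rmk:ConvexityConsequence} and \autoref{lem:EasyValuationProperties} one may then replace $kx$ by a new variable $x'$ together with the side condition $x' \equiv_k 0$.

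After these reductions, the conjunction asserts that $x$ lies in an interval, satisfies congruences $x \equiv_{p^r} t_j$, and has prescribed values $\mathfrak{s}_{p^r}(x - t_j)$ and $\mathfrak{t}_p(x - t_j)$ in relation to auxiliary parameters, together with the predicates $D^{[p^s]}_{p^r}$, $=_\bullet k_\bullet$ and $\equiv_{p^r,\bullet} k_\bullet$ applied to $x - t_j$. The ultrametric behaviour (\autoref{rmk:TriangleIsIsosceles} and \autoref{lem:AddingInfinitesimal}) lets one pick, for each valuation, the $t_j$ ``closest'' to $x$. It then reduces the conditions to ones about a single ball around that $t_j$.

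I expect the main obstacle to be the existence step. We must show that such a system of ball conditions for the various $\mathfrak{s}_{p^r}$ and $\mathfrak{t}_p$, together with the order interval, is consistent exactly when a condition expressible in $\mathcal{A}$ holds (via the dimension predicates on $(H^{[p^s]}+pG)/(H^{[p^{s+1}]}+pG)$ and $\mathrm{discr}$) together with congruence conditions on the $t_j$. The interaction between the order and the congruences is controlled by the $n$-regularity of the $\mathfrak{t}_n$-ribs (\autoref{lem:nRegurality}). The interaction between different primes is controlled by the Chinese remainder theorem in the quotients $G/H$. Both parts require a careful case analysis, which I would follow from \cite[Section 3]{Cluckers2011QuaEli}; in the paper, I would simply invoke the original theorem as above.
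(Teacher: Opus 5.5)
Your main route is the paper's own: the paper gives no proof, only citing \cite[Theorem 1.13]{Cluckers2011QuaEli} and remarking that the displayed form is a weakening of the original family union form, with the convention changes ($\mathfrak{s}_n(a)=\emptyset$ for $a\in nG$, the added predicate $\mathrm{zero}$) recorded only in footnotes. Your compatibility check is a sensible addition, but the auxiliary sorts are not the same sets under the two conventions: $\emptyset$ always lies in this paper's $\mathcal{S}_p$, whereas $\{0\}$ need not (as for $G=\mathbb{Q}$), so ``the old value $\{0\}$'' cannot always be named by an element satisfying $\mathrm{zero}$, and the translation should instead pull back along the natural surjection $\emptyset\mapsto\{0\}$, which may identify two points.
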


\begin{rmk}\label{rmk:EasierRelativeQE}
    In the setting of \autoref{thm:RelativeQE}, each formula $\exists \overline{\theta}\left( \xi(\overline{\eta},\overline{\theta}) \land \psi(\overline{x},\overline{\theta}) \right)$ can be transformed into an $L_{\mathrm{syn}}$-formula of the form:
    \begin{equation*}
        \xi'(\overline{\eta},\overline{u}(\overline{x})) \land \psi'(\overline{x}),
    \end{equation*}
    where $\xi'(\overline{\eta},\overline{\upsilon})$ involves only $\mathcal{A}$-sort symbols, $\overline{u}(\overline{x})$ is a tuple of terms each of the form $\mathfrak{s}_{p^r}\left( \overline{m} \cdot \overline{x} \right)$ or $\mathfrak{t}_p\left( \overline{m} \cdot \overline{x} \right)$ for a tuple of integers $\overline{m}$, and $\psi'$ is a conjunction of atomic and negated atomic formulas in the $G$-sort.

    Indeed, in $L_{\mathrm{syn}}$, the only symbols connecting the $G$-sort and $\mathcal{A}$-sorts are $\mathfrak{s}_{p^r}$ and $\mathfrak{t}_p$.
    Hence, $\psi(\overline{x},\overline{\theta})$ can be expressed as $\psi_1(\overline{u}(\overline{x}),\overline{\theta}) \land \psi_2(\overline{x})$, where the predicates in $\psi_1$ involve only $\mathcal{A}$-sorts and those in $\psi_2$ involve only the $G$-sort.
    Letting $\xi'(\overline{\eta},\overline{\upsilon})$ be $\exists \overline{\theta}\left( \xi(\overline{\eta},\overline{\theta}) \land \psi_1(\overline{\upsilon},\overline{\theta}) \right)$ and $\psi'(\overline{x})$ be $\psi_2(\overline{x})$ yields the desired form.

    Furthermore, by \autoref{lem:prIsEnough}, we may assume that for each predicate $\equiv_m$ or $\equiv_{m,\bullet} k_{\bullet}$ appearing in $\psi'$, the modulus $m$ is a prime power.
    We can then eliminate $\equiv_{p^r}$ from $\psi'$, as $x \equiv_{p^r} 0$ holds if and only if $\mathfrak{s}_{p^r}(x)$ is the least element of $\mathcal{S}_p$.
\end{rmk}

To prove distality (or non-distality), it is useful to rephrase this theorem as a condition for a map to be elementary.

\begin{prop}\label{prop:FunctionRelativeQE}
    Let $G_1$ and $G_2$ be ordered abelian groups, and let $f = (f_G, (f_{\alpha})_{\alpha \in \mathcal{A}})$ be a family of partial functions, where $f_G:G_1 \to G_2$ and $f_{\alpha}$ is from the sort $\alpha$ in $G_1$ to $\alpha$ in $G_2$ for each $\alpha \in \mathcal{A}$.
    Suppose the following conditions hold:
    \begin{enumerate}
        \item $\mathrm{Dom}(f_G)$ is a subgroup, and its image under $\mathfrak{s}_{p^r}$ (resp. $\mathfrak{t}_p$) is contained in $\mathrm{Dom}(f_{\mathcal{S}_p})$ (resp. $\mathrm{Dom}(f_{\mathcal{T}_p})$).
        \item $f_{\mathcal{S}_p}(\mathfrak{s}_{p^r}(a)) = \mathfrak{s}_{p^r}(f_G(a))$ and $f_{\mathcal{T}_p}(\mathfrak{t}_p(a)) = \mathfrak{t}_p(f_G(a))$ for any $a \in \mathrm{Dom}(f_G)$.
        \item $f_G$ is an ordered group embedding that preserves $x =_{\bullet} k_{\bullet}$, $x \equiv_{p^r,\bullet} k_{\bullet}$, and $D_{p^r}^{[p^s]}(x)$.
        \item The family $(f_{\alpha})_{\alpha \in \mathcal{A}}$ is elementary with respect to the $\mathcal{A}$-sorts.
    \end{enumerate}
    Then $f$ is elementary with respect to the full language $L_{\mathrm{syn}}$.
\end{prop}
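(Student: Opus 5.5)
We show that $f$ preserves every $L_{\mathrm{syn}}$-formula $\phi(\overline{x},\overline{\eta})$ with $\overline{x}$ from $\mathrm{Dom}(f_G)$ and $\overline{\eta}$ from the domains of the $f_\alpha$. Fix such a formula. By \autoref{thm:RelativeQE} it is equivalent, in every ordered abelian group, to a finite disjunction of formulas $\exists \overline{\theta}\,(\xi_i(\overline{\eta},\overline{\theta}) \land \psi_i(\overline{x},\overline{\theta}))$. By \autoref{rmk:EasierRelativeQE} each disjunct is in turn equivalent to a formula $\xi'(\overline{\eta},\overline{u}(\overline{x})) \land \psi'(\overline{x})$. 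Here $\xi'$ uses only $\mathcal{A}$-sort symbols, $\overline{u}(\overline{x})$ is a tuple of terms $\mathfrak{s}_{p^r}(\overline{m}\cdot\overline{x})$ or $\mathfrak{t}_p(\overline{m}\cdot\overline{x})$, and $\psi'$ is a conjunction of literals in the $G$-sort only. We may also assume the moduli in $\psi'$ are prime powers and that $\equiv_{p^r}$ does not occur. Since these equivalences hold in the whole theory of ordered abelian groups, they hold in both $G_1$ and $G_2$. So it suffices to show that $f$ preserves each of $\xi'(\overline{\eta},\overline{u}(\overline{x}))$ and $\psi'(\overline{x})$, and that it does so in both directions.

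\textbf{The $G$-part.} Let $\overline{a}$ come from $\mathrm{Dom}(f_G)$. Because $\mathrm{Dom}(f_G)$ is a subgroup and $f_G$ is a group homomorphism, every term $\overline{m}\cdot\overline{a}$ lies in the domain and $f_G(\overline{m}\cdot\overline{a}) = \overline{m}\cdot f_G(\overline{a})$. Every atomic $G$-formula left in $\psi'$ has one of the following forms:
\begin{itemize}
\item $t(\overline{x}) = 0$ or $t(\overline{x}) < 0$;
\item $t(\overline{x}) =_{\bullet} k_{\bullet}$;
\item $t(\overline{x}) \equiv_{p^r,\bullet} k_{\bullet}$;
\item $D_{p^r}^{[p^s]}(t(\overline{x}))$,
\end{itemize}
where $t$ is a $\mathbb{Z}$-linear term. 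Condition (3) says $f_G$ is an ordered group embedding preserving the last three predicates. I read ``preserves'' as ``$P(a) \iff P(f_G(a))$'', and embeddings preserve $=$ and $<$ in both directions. Hence each literal holds of $\overline{a}$ exactly when it holds of $f_G(\overline{a})$, and $\psi'$ is preserved.

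\textbf{The $\mathcal{A}$-part.} By condition (1), each value $\mathfrak{s}_{p^r}(\overline{m}\cdot\overline{a})$ lies in $\mathrm{Dom}(f_{\mathcal{S}_p})$, and each $\mathfrak{t}_p(\overline{m}\cdot\overline{a})$ lies in $\mathrm{Dom}(f_{\mathcal{T}_p})$. By condition (2), $f_{\mathcal{S}_p}(\mathfrak{s}_{p^r}(\overline{m}\cdot\overline{a})) = \mathfrak{s}_{p^r}(\overline{m}\cdot f_G(\overline{a}))$, and likewise for $\mathfrak{t}_p$. Thus $f$ sends the tuple $\overline{u}(\overline{a})$ to $\overline{u}(f_G(\overline{a}))$. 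Since $\xi'$ contains only $\mathcal{A}$-sort symbols, condition (4) gives
\[
\xi'(\overline{\eta},\overline{u}(\overline{a})) \text{ holds in } G_1 \iff \xi'(f(\overline{\eta}),\overline{u}(f_G(\overline{a}))) \text{ holds in } G_2 .
\]
Combining the $G$-part and the $\mathcal{A}$-part gives preservation of each disjunct, hence of $\phi$.

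\textbf{Main obstacle.} The argument is essentially bookkeeping; the one point needing care is how condition (4) should be read. The quantifiers $\exists\overline{\theta}$ in the original normal form range over the full auxiliary sorts of $G_i$, not over the domains of the $f_\alpha$. The reduction in \autoref{rmk:EasierRelativeQE} absorbs these quantifiers into $\xi'$, which is a genuine first-order formula about the $\mathcal{A}$-sorts, possibly with quantifiers. So condition (4) must mean that $(f_\alpha)_\alpha$ is elementary between the $\mathcal{A}$-reducts of $G_1$ and $G_2$, with all $\mathcal{A}$-sort symbols, including the dimension predicates, $\mathrm{discr}$ and $\mathrm{zero}$. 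Under that reading no further work is required.
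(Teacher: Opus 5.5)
Your proposal is correct and follows the paper's argument: the paper likewise uses conditions (1)--(3) to get $f_{\mathcal{S}_p}(\mathfrak{s}_{p^r}(\overline{m}\cdot\overline{a})) = \mathfrak{s}_{p^r}(\overline{m}\cdot f_G(\overline{a}))$ and the analogous identity for $\mathfrak{t}_p$, then concludes directly from \autoref{thm:RelativeQE} and \autoref{rmk:EasierRelativeQE}. Your reading of condition (4) as elementarity of the $\mathcal{A}$-sort structure with all of symbols (2)--(5) of \autoref{dfn:Lsyn} is the intended one, and it matches the structure $G_{\mathcal{A}}$ used later in the paper.
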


\begin{proof}
    For any tuple $\overline{a}$ from $\mathrm{Dom}(f_G)$ and any tuple of integers $\overline{m}$, the assumptions ensure that $f_{\mathcal{S}_p}\left( \mathfrak{s}_{p^r}\left( \overline{m} \cdot \overline{a} \right) \right) = \mathfrak{s}_{p^r}\left( \overline{m} \cdot f_G(\overline{a}) \right)$.
    The same holds for $\mathfrak{t}_p$.
    With this identity established, the claim follows immediately from \autoref{thm:RelativeQE} and \autoref{rmk:EasierRelativeQE}.
\end{proof}

\section{Proof of distality and non-distality}
\label{chp:Distality}

In this section, we prove our main theorem.
\begin{thm}\label{thm:DistalityCriterion}
    An ordered abelian group $G$ is distal if and only if for each $p \in \mathbb{P}$, there exists a positive integer $N_p$ such that for every $H \in \mathcal{S}_p \setminus \left\{ \emptyset \right\}$, $|R_{H}^{\mathfrak{s}_p}| < N_p$.
\end{thm}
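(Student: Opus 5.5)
The proof splits into the two directions; each is handled separately using the relative quantifier elimination in $L_{\mathrm{syn}}$ (\autoref{thm:RelativeQE}, \autoref{prop:FunctionRelativeQE}).

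\emph{Non-distality when rib sizes are unbounded.} Suppose for some $p$ the sizes $|R_H^{\mathfrak{s}_p}|$ are unbounded. By compactness, in a monster model there is $H \in \mathcal{S}_p \setminus \{\emptyset\}$ with $R_H^{\mathfrak{s}_p} = H^{[p]}/(H+pG)$ infinite (\autoref{lem:CoverAndValue}). Pick elements $(a_k)_{k \in I+(c)+J}$ of $H^{[p]}$ whose images in the rib are $\mathbb{F}_p$-linearly independent. By \autoref{lem:LinearlyIndependentElements}, every nontrivial $\mathbb{F}_p$-combination then has $\mathfrak{s}_p$-value exactly $H$.

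By Ramsey and compactness, we may take the sequence indiscernible. Then set $b = a_c - a_{c'}$, or more robustly use a parameter $b$ with $b \equiv a_c \pmod{H+pG}$ that is chosen generic with respect to the other $a_k$. This gives $\mathfrak{s}_p(a_c - b) < H$, while $\mathfrak{s}_p(a_k - b) = H$ for $k \neq c$. The sequence $(a_k)_{k\in I+J}$ remains indiscernible over $b$, because the formula ``$\mathfrak{s}_p(x-b) < \mathfrak{s}_p(x - y)$'' distinguishes only $c$.

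To check indiscernibility over $b$ rigorously, I would realize the configuration explicitly: take $b = a_c + h$ with $h \in H$, and show via \autoref{prop:FunctionRelativeQE} that any order-preserving shift of indices in $I+J$ extends to an elementary map fixing $b$. In other words, this is the standard ``stable vector space'' witness of non-distality.

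\emph{Distality when rib sizes are bounded.} This direction is the main obstacle. I would use the criterion from the Proposition on distality with $|\overline{a}_k| = 1$ and $\overline{b}$ finite. Given an indiscernible sequence $(a_k)$ of singletons with $(a_k)_{k\in I+J}$ indiscernible over $\overline{b}$, I must show $\mathrm{tp}(a_c/\overline{b}) = \mathrm{tp}(a_k/\overline{b})$.

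By \autoref{rmk:EasierRelativeQE}, the type is determined by three kinds of data. The first is the valuation data $\mathfrak{s}_{p^r}(m a_c + \overline{m}\cdot\overline{b})$ and $\mathfrak{t}_p(m a_c + \overline{m}\cdot\overline{b})$, together with their $\mathcal{A}$-sort type over the values of $\overline{b}$. The second is order and sign conditions on the main sort. The third is the predicates $=_\bullet k_\bullet$, $\equiv_{p^r,\bullet}k_\bullet$, and $D^{[p^s]}_{p^r}$.

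The order part is handled as in o-minimal or colored-order distality, since $a_c$ lies in the cut determined by its neighbours. For the valuation values, I would use the isosceles property (\autoref{rmk:TriangleIsIsosceles}). The value $v(a_c - b')$ is squeezed between $v(a_i - b')$ for $i<c$ and $i>c$, unless $v(a_i - a_j)$ is constant, with $a_c$ ``closer'' to $b'$. Because the auxiliary sorts are colored orders, and hence distal, the $\mathcal{A}$-part of the type can be transferred.

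The delicate case arises when all $a_k$ for $k\neq c$ have the same $\mathfrak{s}_{p^r}$-distance $H$ from each other and from $b'$, but $a_c$ might be closer to $b'$. In the rib $R_H^{\mathfrak{s}_{p^r}}$, the classes of $a_k - b'$ would then form an indiscernible sequence in a finite group of size bounded by $N_p^r$ (using \autoref{lem:LinearlyIndependentElements}(2)). Hence they are eventually constant, which forces $a_k - a_{k'} \in G^{\mathfrak{s}}_{<H}$ and contradicts the assumption that the mutual distance is $H$.

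This finiteness argument is exactly where the bound on $|R_H^{\mathfrak{s}_p}|$ enters. The $\mathfrak{t}_p$-ribs are $p$-regular ordered groups (\autoref{lem:nRegurality}), so they behave like ordered pieces and pose no problem. Finally, I would verify the predicates $D^{[p^s]}_{p^r}$ and $\equiv_{p^r,\bullet}k_\bullet$ on $m a_c + \overline{m}\cdot\overline{b}$ using \autoref{lem:AddingInfinitesimal}, reducing them to the already determined valuation data.
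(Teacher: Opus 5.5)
\paragraph{The non-distality half has a genuine gap.} Your concrete witness $b=a_c+h$ with $h\in H$ fails already at the level of the order. Since $\mathfrak{s}_p(a_k-a_c)=H$, you have $a_k-a_c\notin H$. Because $H$ is convex, $b$ therefore lies in the same cut of the strictly monotone sequence as $a_c$. So the formula $x<b$ separates $I$ from $J$, and $(a_k)_{k\in I+J}$ is not indiscernible over $b$. Your other suggestion, $b=a_c-a_{c'}$, gives $\mathfrak{s}_p(a_c-b)=\mathfrak{s}_p(a_{c'})=H$, not $<H$.

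The witness must agree with $a_c$ only in the stable, congruence data, and be unrelated to the sequence in the order and $\mathfrak{t}_q$-data. This is why the paper extends the sequence to $\mathbb{Q}+(\infty)$ under the growth conditions of Condition~2. It then takes $b=a'_\infty$ with:
\begin{itemize}
\item $a'_\infty\equiv_{p^s}a_0$ for all $s$;
\item $a'_\infty\equiv_m a_\infty$ for $p\nmid m$;
\item $a'_\infty$ above every $na_i$;
\item $a'_\infty$ carrying the $\mathfrak{t}_q$-data of $a_\infty$.
\end{itemize}

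Even with such a $b$, indiscernibility of $(a_i)_{i\neq 0}$ over $b$ requires the sequence to be totally indiscernible for every $\mathfrak{s}_{p^s}$, $\equiv_{p^s,\bullet}k_{\bullet}$ and $D_{p^s}^{[p^t]}$ (Lemma~\ref{lem:TotalIndis}). That needs linear independence in $R_H^{\mathfrak{s}_{p^s}}$, not just in $R_H^{\mathfrak{s}_p}$. Such independence is only available for $s\le r=\sup\{s : |(H^{[p^s]}+pG)/(H+pG)|=\infty\}$. When $r<\infty$, congruence with $a_0$ modulo $p^s$ for $s>r$ would import order-dependent information, because the values $\mathfrak{s}_{p^s}(\overline{m}\cdot a_{\bar{\imath}})>H$ need not be invariant under permuting indices. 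The paper therefore uses $a_0-p^rc$ with $(a_0-p^rc)-a_\infty\in\widetilde{H}$ instead.

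Your ``$b\equiv a_c\pmod{H+pG}$, chosen generic'' leaves all of this unaddressed. Your justification, that $\mathfrak{s}_p(x-b)<\mathfrak{s}_p(x-y)$ singles out $c$, only shows that the full sequence is not indiscernible over $b$. That is the easy half.

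\paragraph{The distality half is essentially the paper's argument.} You put the tuple on $\overline{b}$ rather than on the $\overline{a}_l$; both versions of the criterion are available. Your key step is that a constant nonempty $\mathfrak{s}_{p^r}$-value along the sequence would give infinitely many distinct classes in a finite rib. This is the paper's Case~3, followed by transfer through the distal auxiliary sorts.

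One citation is off: Lemma~\ref{lem:LinearlyIndependentElements}(2) does not bound $|R_H^{\mathfrak{s}_{p^r}}|$. The bound $|R_H^{\mathfrak{s}_{p^r}}|\le|R_H^{\mathfrak{s}_p}|^r$ is nonetheless true. For $k<r$, the map $h+p^ky\mapsto y+(H+pG)$ embeds $(H^{[p^r]}\cap(H+p^kG))/(H^{[p^r]}\cap(H+p^{k+1}G))$ into $R_H^{\mathfrak{s}_p}$. This gives a direct substitute for the compactness proof of Lemma~\ref{lem:RprFinite}.
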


The proof is divided into two parts:
In \autoref{sec:Distality}, we show that $G$ is distal if the sizes of its ribs with respect to $\mathfrak{s}_p$ are uniformly bounded for each $p \in \mathbb{P}$.
In \autoref{sec:NonDistality}, we prove that $G$ is not distal if these sizes are not uniformly bounded for some $p \in \mathbb{P}$.

\subsection{Distality}
\label{sec:Distality}

We first establish the distality of the $\mathcal{A}$-sorts.
A \emph{colored order} is a linearly ordered set equipped with a family of unary predicates.
It is well known that every colored order is distal;
specifically, every colored order is dp-minimal (\cite[Proposition 4.2]{Simon2011DpmOrd}), and any dp-minimal structure with a linear order is distal (\cite[Corollary 2.30]{Simon2013DisNon}).
The structure of the $\mathcal{A}$-sorts can be viewed as a union of colored orders.
Although its distality can be inferred from that of colored orders, we provide a direct proof for the reader's convenience.

\begin{lem}\label{lem:AuxiliaryDistality}
    For an ordered abelian group $G$, let $G_{\mathcal{A}}$ be the structure whose sorts consist of all $\mathcal{A}$-sorts with symbols (2), (3), (4), and (5) in \autoref{dfn:Lsyn}.
    Then $G_{\mathcal{A}}$ is distal.
\end{lem}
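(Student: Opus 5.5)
The structure $G_{\mathcal{A}}$ consists of the sorts $\mathcal{S}_p,\mathcal{T}_p,\mathcal{T}_p^+$ for $p \in \mathbb{P}$, with the relations $\leq$ between any two sorts, and unary predicates. I would first reduce to a single colored order. Form the disjoint union $U = \bigsqcup_{\alpha \in \mathcal{A}} \alpha$. Each sort is a set of convex subgroups, or $\emptyset$, so the cross-sort relations $\leq$ induce a linear preorder on $U$. It is a preorder rather than an order because the same subgroup $H$ may occur in several sorts. Quotienting by the equivalence ``same subgroup'' gives a linear order $L$. Make $L$ a colored order by adding, for each sort $\alpha$, a unary predicate $P_\alpha$ ("this subgroup lies in $\alpha$"), together with the predicates (3), (4), (5) relativized to the appropriate sort, so that the predicate on $\mathcal{S}_p$ becomes a predicate on $L$ implying $P_{\mathcal{S}_p}$. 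Then $G_{\mathcal{A}}$ is interpretable in this colored order: each sort $\alpha$ is the definable set $P_\alpha$, and every symbol of $G_{\mathcal{A}}$ is definable. Conversely, the colored order is interpretable in $G_{\mathcal{A}}$ via the disjoint union modulo a definable equivalence relation. Hence the two are bi-interpretable, up to finitely many sorts at a time.

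Next I would check that distality transfers through this interpretation. Since indiscernibility is witnessed by finitely many formulas at a time, it suffices to pass to each finite reduct. For distality itself I would use the first criterion of the Proposition adapted from Aschenbrenner et al.: take $(\overline{a}_k)_{k \in I+(c)+J}$ indiscernible and indiscernible over $\overline{b}$ on $I+J$, and aim to show $\mathrm{tp}(\overline{a}_c/\overline{b}) = \mathrm{tp}(\overline{a}_k/\overline{b})$. Because every element of every sort of $G_{\mathcal{A}}$ corresponds to an element of $L$ and the formulas translate, it is enough to prove this in the colored order $L$ with elements of $L$.

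For the direct argument in a colored order $L$, I would proceed as follows. Reduce to $|\overline{a}_k| = 1$ and $|\overline{b}|$ finite, using the ``Moreover'' part of the cited Proposition, applied to the one-sorted $L$. Colored orders admit quantifier elimination after adding the unary predicates and, if needed, the definable cut predicates. So I would instead argue via dp-minimality, citing \cite[Proposition 4.2]{Simon2011DpmOrd} and \cite[Corollary 2.30]{Simon2013DisNon} as the paper suggests. Alternatively, argue directly. The type of a single $a$ over $\overline{b}$ is determined by $\mathrm{tp}(a)$ together with the position of $a$ relative to each $b_j$, which is a consequence of the order-indiscernibility structure. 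An indiscernible sequence $(a_k)$ of singletons is monotone or constant. For each $b_j$, indiscernibility over $b_j$ on $I+J$ forces the relation ($<$, $=$ or $>$) between $a_k$ and $b_j$ to be the same for all $k \in I+J$. Since $I$ and $J$ have no endpoints and the sequence is monotone, $a_c$ lies between elements of $I$ and of $J$, so it has the same relation to $b_j$; an equality with $b_j$ is ruled out unless the sequence is constant.

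The main obstacle is justifying that in a colored order, $\mathrm{tp}(a/\overline{b})$ is determined by $\mathrm{tp}(a)$ and the order relations to $\overline{b}$. This fails without care, because formulas such as ``there is a red element between $x$ and $b$'' exist. I would handle this by observing that for such formulas the truth value on the interval between $a_k$ and $b_j$ is monotone in $k$, so it is preserved at $c$ by sandwiching between elements of $I$ and $J$. This is exactly where I would rather invoke the cited dp-minimality result than redo quantifier elimination for colored orders.
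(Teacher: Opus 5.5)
Your main line is correct, but it is not the route the paper takes. You pass to finite reducts and identify each one with a one-sorted colored order $L_F$, namely the union of finitely many sorts modulo ``same subgroup''. You check that the two have the same definable sets. You then import two external theorems: colored orders are dp-minimal, and dp-minimal linearly ordered theories are distal.

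It is the converse interpretation, $L_F$ inside the reduct of $G_{\mathcal{A}}$, that carries the weight here. Distality does not pass to reducts: $(\mathbb{Q},+)$ is a reduct of the distal $(\mathbb{Q},<,+)$. You do state both directions, so this is fine.

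The paper mentions this route in passing but argues directly in the many-sorted structure instead:
\begin{itemize}
\item It uses the criterion with single elements $a_k$ and finite $\overline{b}$.
\item It notes that each $b_i$ lies strictly above or strictly below the whole sequence $(a_k)_{k\in I+(c)+J}$.
\item In the monster model, it takes an automorphism sending $(a_h,a_i,a_j)$ to $(a_h,a_c,a_j)$ for $h<i$ in $I$ and $j\in J$.
\item It replaces this automorphism by the identity outside the cross-sort interval $[a_h,a_j]$.
\end{itemize}
Because the only symbols are the orders and unary predicates, the glued map is still an automorphism. It fixes $\overline{b}$ and sends $a_i$ to $a_c$. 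Your route is modular, but it needs the reduct and bi-interpretation bookkeeping plus two nontrivial citations. The paper's argument is self-contained and a few lines long.

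Your sketched direct alternative, by contrast, would not work as stated. The slogan that $\mathrm{tp}(a/\overline{b})$ is determined by $\mathrm{tp}(a)$ and the order relations to $\overline{b}$ is false. In $(\mathbb{R},<,\mathbb{Z})$, the points $0.5$ and $1.5$ have the same type and both lie below $b=2.5$, yet they have different types over $b$. Your monotonicity patch only covers formulas such as ``there is a red point between $x$ and $b$''. A formula such as ``the interval $(x,b)$ has a least red point'' need not be monotone in $x$.

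What closes this gap without any quantifier elimination is exactly the gluing argument above. Equivalently, one can run a Feferman--Vaught argument for the ordered sum $(-\infty,a_h)+[a_h,a_j]+(a_j,\infty)$. Either way, it uses only that $\overline{b}$ lies outside $[a_h,a_j]$, not any description of one-types.
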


\begin{proof}
    We may assume $G_{\mathcal{A}}$ is a monster model.
    Let $I,J \models DLO$, $(a_k)_{k \in I+(c)+J}$ be a non-constant indiscernible sequence from a sort $\gamma \in \mathcal{A}$, and $\overline{b} = (b_1,\dots,b_m)$ be such that $(a_k)_{k \in I+J}$ is indiscernible over $\overline{b}$.
    By the indiscernibility of $(a_k)_{k \in I+(c)+J}$, the sequence is strictly monotone, and we may assume it is strictly increasing.
    Furthermore, the indiscernibility of $(a_k)_{k \in I+J}$ over $\overline{b}$ implies that for each $b_i$, either $b_i > (a_k)_{k \in I+J}$ or $b_i < (a_k)_{k \in I+J}$;
    consequently, either $b_i > (a_k)_{k \in I+(c)+J}$ or $b_i < (a_k)_{k \in I+(c)+J}$ holds.

    For any $h < i$ in $I$ and $j$ in $J$, we have $\mathrm{tp}(a_h,a_i,a_j) = \mathrm{tp}(a_h,a_c,a_j)$.
    Since $G_{\mathcal{A}}$ is a monster model, there exists an automorphism $(f_{\alpha})_{\alpha \in \mathcal{A}}$ of $G_\mathcal{A}$ such that $f_{\gamma}((a_h,a_i,a_j)) = (a_h,a_c,a_j)$.
    We define a family of functions $(g_{\alpha})_{\alpha \in \mathcal{A}}$ as follows:
    $g_{\alpha}(x) = f_{\alpha}(x)$ if $a_h \leq x \leq a_j$, and $g_{\alpha}(x) = x$ otherwise.
    Each $g_{\alpha}$ is bijective and preserves all unary predicates.
    Also, $(g_{\alpha})_{\alpha \in \mathcal{A}}$ preserves the orderings.
    Thus, $(g_{\alpha})_{\alpha \in \mathcal{A}}$ is also an automorphism that fixes $\overline{b}$ and maps $a_i$ to $a_c$.
    Therefore, $\mathrm{tp}(a_i / \overline{b}) = \mathrm{tp}(a_c / \overline{b})$, which completes the proof.
\end{proof}

\begin{thm}\label{thm:Distality}
    For an ordered abelian group $G$, suppose that for each $p \in \mathbb{P}$, there exists $N_p > 1$ such that $|R_H^{\mathfrak{s}_p}| < N_p$ for any $H \in \mathcal{S}_p \setminus \left\{ \emptyset \right\}$.
    Then $G$ is distal.
\end{thm}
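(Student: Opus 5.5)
We work in $L_{\mathrm{syn}}$; since this is an expansion of $L_{\mathrm{oag}}$ by imaginary sorts and definable predicates, the proposition quoted from \cite[Corollary 1.27]{Aschenbrenner2022DisVal} reduces distality of $G$ to distality of its $L_{\mathrm{syn}}$-theory. By the first proposition of \autoref{sec:GeneralitiesOnDistality} (variant with $|\overline{a}_k| = 1$, $|\overline{b}|$ finite) it suffices to take a monster $G$, an indiscernible sequence $(a_k)_{k \in I+(c)+J}$ of single elements of the main sort, and a finite tuple $\overline{b}$ from the main sort (using the remark after that proposition) such that $(a_k)_{k\in I+J}$ is indiscernible over $\overline{b}$, and to show $\mathrm{tp}(a_c/\overline{b}) = \mathrm{tp}(a_i/\overline{b})$ for $i \in I+J$. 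The hypothesis is first-order, so it transfers to the monster.

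By \autoref{thm:RelativeQE} and \autoref{rmk:EasierRelativeQE}, $\mathrm{tp}(a/\overline{b})$ is determined by (i) the quantifier-free $G$-sort type of $a\overline{b}$: order, $=_\bullet k_\bullet$, $\equiv_{p^r,\bullet}k_\bullet$, $D^{[p^s]}_{p^r}$ applied to terms $ma + \overline{n}\cdot\overline{b}$; and (ii) the $\mathcal{A}$-sort type of the tuple of values $\mathfrak{s}_{p^r}(ma + \overline{n}\cdot\overline{b})$, $\mathfrak{t}_p(ma+\overline{n}\cdot\overline{b})$ together with those of $\overline{b}$. For (ii) I would use \autoref{lem:AuxiliaryDistality}: the sequence of value-tuples $\bigl(v(ma_k + \overline{n}\cdot\overline{b})\bigr)_k$ (finitely many $v,m,\overline n$ at a time) is indiscernible over the values of $\overline{b}$ on $I+J$, and I want to apply auxiliary distality to it. The catch is that the whole sequence indexed by $I+(c)+J$ need only be indiscernible over $\emptyset$ as a sequence of $a_k$'s, not over $\overline{b}$, so I must first show the value sequence is indiscernible on $I+(c)+J$ in $G_{\mathcal{A}}$. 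Order-theoretically (ii) reduces, via the isosceles property \autoref{rmk:TriangleIsIsosceles}, to comparing $v(ma_k - d)$ where $d = -\overline{n}\cdot\overline{b}/m$ formally: for a valuation and an indiscernible sequence, either $v(ma_k + e)$ is eventually constant equal to $v(e)$-type data, or it is determined by $v(ma_k - ma_l)$; the standard "pseudo-convergent" case analysis shows that for $c$ in the middle the value $v(ma_c+e)$ must equal the common value on $I+J$ unless $e$ sits "at $c$", which I rule out by density of $I$ and $J$.

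The order part of (i) is the ordered-group case (like DOAG, distal): for a fixed linear term, the sign of $ma_k + \overline{n}\cdot\overline{b}$ on $I+J$ is constant by indiscernibility over $\overline{b}$ and monotonicity in $k$ forces the same sign at $c$. The main obstacle is the congruence-type predicates $\equiv_{p^r,\bullet}k_\bullet$ and $D^{[p^s]}_{p^r}$ on $ma_c+\overline{n}\cdot\overline{b}$, and the values $\mathfrak{s}_{p^r}$ when $\mathfrak{s}_{p^r}(ma_k - ma_l)$ is constant $H$ (the "stable" configuration). This is exactly where the rib bound enters: if $\mathfrak{s}_p(a_k-a_l) = H$ for all $k\ne l$, then the images of $a_k$ in $R_H^{\mathfrak{s}_p}$ are pairwise distinct, contradicting finiteness $|R_H^{\mathfrak{s}_p}| < N_p$ for an infinite sequence. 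More generally, using \autoref{lem:LinearlyIndependentElements}, the rib $R_H^{\mathfrak{s}_{p^r}}$ is bounded too (its $\mathbb{Z}/p^r$-rank is at most $\log_p N_p$ and the chain $H+pG\subseteq\cdots$ has bounded length), so the values $\mathfrak{s}_{p^r}(a_k - a_l)$ for $k<l$ cannot be constant on the whole sequence; by indiscernibility they then strictly increase or decrease along the sequence ("pseudo-Cauchy" behaviour). In that tree-like situation $\mathfrak{s}_{p^r}(ma_c+e)$ and the predicates at $ma_c+e$ are determined by comparing $\mathfrak{s}_{p^r}(ma_c+e)$ with $\mathfrak{s}_{p^r}(a_c-a_k)$ via \autoref{lem:AddingInfinitesimal}, which lets me transfer predicates from some $a_k$ (with $k$ close to $c$) to $a_c$. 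Finally I assemble an $L_{\mathrm{syn}}$-partial map fixing $\overline{b}$ and sending $a_i\mapsto a_c$ and check the hypotheses of \autoref{prop:FunctionRelativeQE}, with the $\mathcal{A}$-sort part supplied by the automorphism built in the proof of \autoref{lem:AuxiliaryDistality}.
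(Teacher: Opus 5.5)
\textbf{There is a genuine gap in how you use the distality of $G_{\mathcal{A}}$ for part (ii).} Your overall architecture is the paper's: relative QE via \autoref{prop:FunctionRelativeQE}, distality of $G_{\mathcal{A}}$, bounded ribs in the stable configuration, and \autoref{lem:AddingInfinitesimal} for perturbations. Taking singletons $a_k$ with a finite $\overline{b}$, instead of tuples with a single $b$, is harmless.

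You say you must first show that $\bigl(v(ma_k+e)\bigr)_{k\in I+(c)+J}$, with $e=\overline{n}\cdot\overline{b}$, is indiscernible in $G_{\mathcal{A}}$, and then apply \autoref{lem:AuxiliaryDistality}. But this sequence involves $\overline{b}$. Its indiscernibility on the full index set already says that the values at $a_c$ sit over the values of $\overline{b}$ exactly as those at $a_i$ do. That is the conclusion you want, so the plan is circular. A valuation-by-valuation case analysis cannot supply it, because the $G_{\mathcal{A}}$-type also records comparisons between values for different $p$, $r$ and sorts, together with the unary predicates. Moreover, your claim that $v(ma_c+e)$ ``must equal the common value on $I+J$'' fails when $v(ma_k+e)$ tracks $v(ma_k-ma_l)$, since then it moves with $k$.

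The missing idea is the paper's splitting:
\begin{itemize}
\item Fix $i_0\in I$ and $j_0\in J$, and pass to the segment $I'+(c)+J'$ with $I'=I_{>i_0}$, $J'=J_{<j_0}$.
\item Prove that for every term and every $v$, with $z\in\{i_0,j_0\}$, either (A) $v(ma_l+e)=v(ma_l-ma_z)$ for all $l$ in the segment, or (B) $v(ma_l+e)=v(ma_z+e)$ for all such $l$.
\item To get this, compare the fixed value $v(ma_{i_0}+e)$ with the strictly monotone sequence $v(ma_l-ma_{i_0})$ (use $j_0$ in the decreasing case). Indiscernibility over $\overline{b}$ on $I'+J'$ places it above or below the whole sequence.
\item The (A)-values are then functions of $a_l$ over $a_{i_0}a_{j_0}$. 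So they form a sequence indiscernible in $G_{\mathcal{A}}$ whose $I'+J'$-part is indiscernible over the (B)-values, which act as parameters.
\item Distality of $G_{\mathcal{A}}$, applied to these tuples, gives the elementary $\mathcal{A}$-map needed for \autoref{prop:FunctionRelativeQE}. The single-element automorphism from the proof of \autoref{lem:AuxiliaryDistality} does not suffice here.
\item The same split, via \autoref{lem:AddingInfinitesimal}, says where the $G$-sort predicates transfer from. In case (A) they come from $ma_l-ma_z$, by indiscernibility of the sequence over $a_z$, not from a neighbour $a_k$.
\end{itemize}

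\textbf{The constant configuration also needs repair.}
\begin{itemize}
\item For $\mathfrak{s}_{p^r}$, your rib bound is correct: filtering by $H+p^sG$ gives $|R_H^{\mathfrak{s}_{p^r}}|\le|R_H^{\mathfrak{s}_p}|^r$ (the paper proves \autoref{lem:RprFinite} with indiscernibles instead). However, the constant value may be $\emptyset$, so ``cannot be constant, hence strictly monotone'' is wrong as stated. That case is easy, because all $p^r$-data of $ma_l+e$ depend only on its class modulo $p^rG$.
\item For $\mathfrak{t}_p$, the rib $H^+/H$ may be infinite, and density of $I$ and $J$ cannot exclude $\mathfrak{t}_p(ma_c+e)<H$ while $\mathfrak{t}_p(ma_k+e)=H$ on $I+J$. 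The analogous configuration occurs for an independent sequence in an infinite $\mathbb{F}_p$-vector space with $e=-a_c$.
\item What excludes it is the order on $H^+/H$. The images $ma_l/H$ are strictly monotone, and the constant sign of $ma_k+e\notin H$ on $I+J$ puts $-e/H$ strictly above or below all of them. Hence $-e/H\neq ma_c/H$.
\item A similar order argument gives $(ma_c+e)/H\neq k_H$ when $G/H$ is discrete, which you need for $x=_{\bullet}k_{\bullet}$.
\end{itemize}
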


Before beginning the proof, we establish several preliminary facts.

\begin{lem}\label{lem:RprFinite}
    Suppose that $G$ satisfies the assumption of \autoref{thm:Distality}.
    Then, for each prime $p \in \mathbb{P}$ and each $r \geq 1$, there exists $N_{p^r} > 1$ such that $|R_H^{\mathfrak{s}_{p^r}}| < N_{p^r}$ for any $H \in \mathcal{S}_p \setminus \left\{ \emptyset \right\}$.
\end{lem}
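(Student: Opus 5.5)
We bound $|R_H^{\mathfrak{s}_{p^r}}| = |H^{[p^r]}/(H+p^rG)|$ (using \autoref{lem:CoverAndValue} (1)) in terms of the $\mathfrak{s}_p$-rib sizes. We filter this quotient by the chain of subgroups
\begin{equation*}
    H + p^r G \subseteq H^{[p^r]} \cap (H + p^{r-1}G) \subseteq \dots \subseteq H^{[p^r]} \cap (H + pG) \subseteq H^{[p^r]},
\end{equation*}
or more conveniently by the chain $H^{[p^r]} \cap (H + p^iG)$ for $i = 0,1,\dots,r$, noting that $H^{[p^r]} \cap (H + p^r G) = H + p^r G$ because $H \subseteq H^{[p^r]}$ and $p^rG \subseteq H^{[p^r]}$. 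It then suffices to bound each successive quotient $\bigl(H^{[p^r]} \cap (H+p^iG)\bigr) / \bigl(H^{[p^r]} \cap (H+p^{i+1}G)\bigr)$ uniformly in $H$.

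For the top step ($i=0$), the quotient $H^{[p^r]}/(H^{[p^r]} \cap (H+pG))$ injects into $H^{[p]}/(H+pG) = R_H^{\mathfrak{s}_p}$, since $H^{[p^r]} \subseteq H^{[p]}$. Hence it has size $< N_p$. For general $i$, we define a map from the $i$-th quotient into $R_{H'}^{\mathfrak{s}_p}$ for a suitable $H'$. Take $x = h + p^i y \in H^{[p^r]}$ with $h \in H$, and send $x \mapsto y$ modulo $H + pG$.

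Then we check two things. First, well-definedness: if $h + p^i y = h' + p^i y'$, then $p^i(y-y') \in H$, so $y - y' \in H$ by \autoref{rmk:ConvexityConsequence}. Second, the image lies in $H^{[p]}$: for $H' > H$ we have $p^i y \in H' + p^r G$, and dividing by $p^i$ via \autoref{rmk:ConvexityConsequence} gives $y \in H' + p^{r-i}G \subseteq H' + pG$ (as $i<r$). So $y \in H^{[p]}$. The map is additive, and its kernel consists exactly of those $x$ with $y \in H + pG$, i.e.\ $x \in H + p^{i+1}G$. Thus each of the $r$ layers embeds into $R_H^{\mathfrak{s}_p}$ and has size $< N_p$.

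This gives $|R_H^{\mathfrak{s}_{p^r}}| < N_p^r$, so $N_{p^r} = N_p^r$ works. One subtlety is that $H \in \mathcal{S}_p = \mathcal{S}_{p^r}$ by \autoref{lem:DifficultValuationProperties} (3), so the same $H$ indexes both ribs. The main care point is the well-definedness and kernel computation, which relies on the divisibility-through-convexity fact \autoref{rmk:ConvexityConsequence} applied in $G/H$.
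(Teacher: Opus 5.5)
Your argument is correct, but it takes a different route from the paper.

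\textbf{The paper's route.} The paper argues by contradiction. Assuming no uniform bound, it passes to a monster model and uses compactness to get a single $H$ with $|R_H^{\mathfrak{s}_{p^r}}| = \infty$. It then takes an indiscernible sequence $(a_i)$ whose members and pairwise differences all have $\mathfrak{s}_{p^r}$-value $H$. Next it chooses the least $s \le r$ for which the differences still have $\mathfrak{s}_{p^s}$-value $H$. If $s>1$, it writes $a_i - a_0 = b_i + p^{s-1}a_i'$ with $b_i \in H$, which produces infinitely many distinct classes in $R_H^{\mathfrak{s}_p}$, a contradiction.

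\textbf{Your route.} Your filtration $F_i = H^{[p^r]} \cap (H + p^iG)$ uses the same basic mechanism, dividing by a power of $p$ via \autoref{rmk:ConvexityConsequence}, but applies it directly to every layer. Each $F_i/F_{i+1}$ embeds into $R_H^{\mathfrak{s}_p}$ via $h + p^i y \mapsto y + (H+pG)$. Your checks are all sound: well-definedness, the fact that $y \in H^{[p]}$ (which needs $r-i \geq 1$, as you note), and the kernel being exactly $F_{i+1}$.

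\textbf{Comparison.} Your version is purely algebraic and needs no saturation, compactness or Ramsey. It also gives the explicit bound $|R_H^{\mathfrak{s}_{p^r}}| \le (N_p-1)^r$. The paper's version fits the indiscernible-sequence style used throughout that section but gives no explicit bound.

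A minor point: the ``suitable $H'$'' you mention is just $H$ itself, since every layer embeds into the same rib $R_H^{\mathfrak{s}_p}$.
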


\begin{proof}
    Assume that the conclusion does not hold.
    We may assume $G$ is a monster model.
    By compactness, there exist $p \in \mathbb{P}$, $r \geq 1$, and $H \in \mathcal{S}_p \setminus \left\{ \emptyset \right\}$ such that $|R_H^{\mathfrak{s}_{p^r}}| = \infty$.
    We can then choose a sequence $(a_i)_{i < \omega}$ such that $\mathfrak{s}_{p^r}(a_i) = \mathfrak{s}_{p^r}(a_j - a_i) = H$ for any $i < j <\omega$.
    By Ramsey, we may assume that $(a_i)_{i < \omega}$ is indiscernible.

    Let $s$ be the minimal integer such that $1 \leq s \leq r$ and $\mathfrak{s}_{p^s}(a_j - a_i) = H$ for any $i < j$.
    We define $\widetilde{a_i} = a_i - a_0$ for $i > 0$.
    Then, $\mathfrak{s}_{p^s}(\widetilde{a_i}) = \mathfrak{s}_{p^s}(\widetilde{a_j} - \widetilde{a_i}) = H$ for any $0 < i < j$.
    If $s = 1$, then $|R_H^{\mathfrak{s}_p}| = \infty$, which is a contradiction.
    If $s > 1$, then for any $i > 0$, we have $\mathfrak{s}_{p^{s-1}}(\widetilde{a_i}) < H$ (see \autoref{lem:EasyValuationProperties} (1)).
    Thus, $\widetilde{a_i} = b_i + p^{s-1} a_i'$ for some $b_i \in H$ and $a_i' \in G$.
    As in the proof of \autoref{lem:DifficultValuationProperties} (1), it can be shown that $\mathfrak{s}_p(a_i') = \mathfrak{s}_p(a_j' - a_i') = H$ for any $0 < i < j$.
    This again implies $|R_H^{\mathfrak{s}_p}| = \infty$, a contradiction.
\end{proof}

\begin{rmk}\label{rmk:ValueOfIndiscernibles}
    Let $(a_i)_{i \in I}$ be an indiscernible sequence in the main sort $G$, and let $v \in \left\{ \mathfrak{s}_{p^r}, \mathfrak{t}_p \mid p \in \mathbb{P}, r \geq 1 \right\}$.
    By \autoref{rmk:TriangleIsIsosceles}, exactly one of the following three cases holds:
    \begin{itemize}
        \item $v(a_j - a_i) < v(a_k - a_j) = v(a_i - a_k)$ for any $i < j < k$ in $I$.
        \item $v(a_k - a_j) < v(a_i - a_k) = v(a_j - a_i)$ for any $i < j < k$ in $I$.
        \item $v(a_j - a_i) = v(a_k - a_j) = v(a_i - a_k)$ for any $i < j < k$ in $I$.
    \end{itemize}
    The case $v(a_i - a_k) < v(a_j - a_i) = v(a_k - a_j)$ cannot occur.
    Indeed, If it did, for any $i < j < k < l$ in $I$, we would have $v(a_j - a_i) = v(a_k - a_j)$, $v(a_j - a_i) = v(a_l - a_j)$, and $v(a_j - a_l) < v(a_k - a_j)$, a contradiction.
\end{rmk}

\begin{proof}[Proof of \autoref{thm:Distality}]
    Without loss of generality, we may assume that $G$ is a monster model.
    Fix arbitrary $I,J \models DLO$.
    Let $(\overline{a}_l)_{l \in I+(c)+J}$ be an indiscernible sequence from the main sort $G$ and let $b \in G$, and assume that $(\overline{a}_l)_{l \in I+J}$ is indiscernible over $b$.
    Fix some $i_0 \in I$ and $j_0 \in J$, and let $I' = I_{> i_0}$ and $J' = J_{< j_0}$.
    We will establish the following claim:

    \setcounter{clm}{0}
    \begin{clm}
        For any $m \in \mathbb{Z}$ and any tuple of integers $\overline{m}$ with $|\overline{m}| = |\overline{a}_l|$:
        \begin{itemize}
            \item[(*)] $P(mb - \overline{m} \cdot \overline{a}_c) \iff P(mb - \overline{m} \cdot \overline{a}_l)$ for any $l \in I'+J'$, where $P(x)$ is one of the predicates $x = 0$, $x > 0$, $x =_{\bullet} k_{\bullet}$, $x \equiv_{p^r,\bullet} k_{\bullet}$, or $D_{p^r}^{[p^s]}(x)$.
            \item[(**)] For any $v \in \left\{ \mathfrak{s}_{p^r}, \mathfrak{t}_p \mid p \in \mathbb{P}, r \geq 1 \right\}$, either
            \begin{enumerate}
                \item[(A)] $\left( v\left( mb - \overline{m} \cdot \overline{a}_l \right) \right)_{l \in I'+(c)+J'} = \left( v\left( \overline{m} \cdot \overline{a}_l - \overline{m} \cdot \overline{a}_z \right) \right)_{l \in I'+(c)+J'}$, or
                \item[(B)] $\left( v\left( mb - \overline{m} \cdot \overline{a}_l \right) \right)_{l \in I'+(c)+J'} = \left( v\left( mb - \overline{m} \cdot \overline{a}_z \right) \right)_{l \in I'+(c)+J'}$,
            \end{enumerate}
            where $z \in \left\{ i_0, j_0 \right\}$.
        \end{itemize}
    \end{clm}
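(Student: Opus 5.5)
Throughout, fix $m$ and $\overline{m}$ and write $c_l = \overline{m} \cdot \overline{a}_l$ and $d = mb$. The sequence $(c_l)_{l \in I+(c)+J}$ is indiscernible and $(c_l)_{l \in I+J}$ is indiscernible over $d$. So both (*) and (**) are statements about a single indiscernible sequence of group elements together with one parameter $d$. The plan is to prove (**) first, valuation by valuation, and then derive (*) from (**) using \autoref{lem:AddingInfinitesimal}.

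\emph{Proof of (**).} Fix $v$. By \autoref{rmk:ValueOfIndiscernibles}, the values $v(c_l - c_k)$ follow one of three patterns. I will first exclude the ``all equal'' pattern when $v = \mathfrak{s}_{p^r}$. In that case every difference $c_l - c_k$ with $l \neq k$ would have value some fixed $H$. Then the $c_l$ would give infinitely many distinct classes in $R_H^{\mathfrak{s}_{p^r}}$, contradicting \autoref{lem:RprFinite}.

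Now take $z = i_0$ (the case $z = j_0$ is symmetric) and $l \in I'+(c)+J'$. Consider the triangle formed by $d$, $c_l$ and $c_z$, and compare the fixed quantity $v(d - c_z)$ with $v(c_l - c_z)$.
\begin{itemize}
\item If $v(d-c_z) < v(c_l - c_z)$, then \autoref{rmk:TriangleIsIsosceles} gives $v(d-c_l) = v(c_l - c_z)$, which is (A).
\item If $v(d-c_z) > v(c_l - c_z)$, it gives $v(d - c_l) = v(d - c_z)$, which is (B).
\end{itemize}
By \autoref{rmk:ValueOfIndiscernibles}, the function $l \mapsto v(c_l - c_{i_0})$ on $I'+(c)+J'$ is either constant or changes value only once, between $I$ and $J$. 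In either case the value at $c$ agrees with the value at points of $I'$ or at points of $J'$. Indiscernibility over $d$ on $I+J$ makes the comparison with $v(d-c_{i_0})$ uniform on $I'$ and uniform on $J'$. When the two comparisons agree, that common outcome transfers to $l = c$ and we get (A) or (B). When they disagree, or when an equality $v(d - c_z) = v(c_l - c_z)$ occurs, I switch to $z = j_0$, where the roles of $I$ and $J$ are reversed. I expect that one of the two choices of $z$ always lands in a strict case on all of $I'+(c)+J'$.

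\emph{Main obstacle.} The hard part is the tie $v(d - c_z) = v(c_l - c_z)$ for both choices of $z$, i.e.\ $d$ lying in the same ``ball'' as the whole sequence.
\begin{itemize}
\item For $\mathfrak{s}_{p^r}$, I would rule this out as follows. If it occurred uniformly, the elements $d - c_l$ for $l \in I+J$ would all lie in $G^{\mathfrak{s}_{p^r}}_{\leq H}$ with pairwise differences of value $H$. This again produces infinitely many classes in the finite rib, contradicting \autoref{lem:RprFinite}.
\item For $\mathfrak{t}_p$, the rib $H^+/H$ is an ordered group, by \autoref{lem:CoverAndValue} and \autoref{lem:nRegurality}. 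There I would use the order: the images of the $c_l$ in $H^+/H$ are strictly monotone, and $d$ determines a cut in them. Hence $v(d - c_c)$ is pinned between values attained on $I'$ and on $J'$, and is therefore equal to them.
\end{itemize}

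\emph{Proof of (*).} For the predicates $x = 0$ and $x > 0$, the sequence $(c_l)$ is monotone, and by indiscernibility over $d$ the element $d$ lies in a fixed cut of $(c_l)_{l \in I+J}$. This cut cannot be between $I$ and $J$ in a way that separates $c$ from $I' \cup J'$, because $d$ must be above or below each whole $I$ or $J$ block. For the remaining predicates I use (**) with the relevant $v$ ($\mathfrak{t}_2$ or $\mathfrak{s}_{p^r}$) and write $d - c_l = (d - c_z) + (c_z - c_l)$.
\begin{itemize}
\item In the strict case (B), \autoref{lem:AddingInfinitesimal} gives that $P(d - c_l)$ is equivalent to $P(d - c_z)$, which is independent of $l$.
\item In the strict case (A), it gives that $P(d - c_l)$ is equivalent to $P(c_z - c_l)$, which is independent of $l \in I'+(c)+J'$ by indiscernibility of the full sequence.
\end{itemize}
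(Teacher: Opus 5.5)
Your ingredients are the right ones: the isosceles trichotomy, \autoref{lem:RprFinite}, the ordered rib $H^+/H$, and \autoref{lem:AddingInfinitesimal}. However, the central step of (**) rests on a false description and an unproved expectation.

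By \autoref{rmk:ValueOfIndiscernibles}, the map $l \mapsto v(c_l - c_{i_0})$ on $I'+(c)+J'$ is either strictly increasing or constant. Since the whole sequence is indiscernible, nothing special happens between $I$ and $J$. In the strictly increasing case the value at $c$ coincides with no value on $I'$ or $J'$, so your transfer mechanism does not apply.

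The missing idea is that $(\overline{a}_l)_{l \in I'+J'}$ is indiscernible over $b\,\overline{a}_{i_0}\overline{a}_{j_0}$ as a \emph{single} sequence. So the comparison of $v(d - c_{i_0})$ with $v(c_l - c_{i_0})$ is uniform on all of $I'+J'$, not merely on $I'$ and on $J'$ separately. Hence, in the increasing case, $v(d - c_{i_0})$ lies strictly above or strictly below all these values. The value at $c$ is sandwiched between them, so it follows too. This gives (A) or (B) with no ties and no ``disagreement''. When instead $v(c_l - c_{j_0})$ is strictly decreasing, take $z = j_0$ symmetrically. Your expectation that some $z$ always lands in a strict case is true exactly in these two patterns and false in the all-equal one, so the argument must be a case split on the pattern.

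The same joint uniformity is needed in two other places. In (*) for $=$ and $>$, it rules out $d$ sitting in the cut between the $I$- and $J$-blocks, where $d = c_c$ is a priori possible. In your $\mathfrak{t}_p$ tie argument, ``$d$ determines a cut'' is not enough, since the middle cut would allow $d - c_c \in H$. You need the image of $d - c_{i_0}$ to lie above or below \emph{all} images of $c_l - c_{i_0}$ in $H^+/H$.

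Second, \autoref{lem:RprFinite} does not exclude the all-equal pattern for $\mathfrak{s}_{p^r}$. It only forces the common value to be $\emptyset$, i.e.\ $c_l - c_k \in p^rG$, and this does occur (e.g.\ when every entry of $\overline{m}$ is divisible by $p^r$). There a tie $\mathfrak{s}_{p^r}(d - c_z) = \emptyset$ is possible, and your tie argument yields no contradiction. The correct conclusion is that (B) holds. The predicates $x \equiv_{p^r,\bullet} k_{\bullet}$ and $D_{p^r}^{[p^s]}(x)$ then transfer because they are invariant under adding elements of $p^rG$. This is not an instance of \autoref{lem:AddingInfinitesimal}, which requires a strict inequality.

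Finally, your derivation of (*) covers only the strict cases, so $x =_{\bullet} k_{\bullet}$ is untreated in the $\mathfrak{t}_2$ tie case, where every value equals $H = \mathfrak{t}_2(d - c_z)$. If $G/H$ is discrete you must show $(d - c_l)/H \neq k_H$ for every $l$, including $l = c$. The images $(c_l - c_{i_0})/H$ are distinct and indiscernible in the discrete group $H^+/H$, so their pairwise differences exceed every $n \cdot 1_H$. Since the image of $d - c_{i_0}$ lies beyond all of them, $|(d - c_c)/H| > n \cdot 1_H$ for all $n$.
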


    We first see that the goal follows assuming Claim 1.
    Let $V_A$ be the set of triples $(m,\overline{m},v)$---where $m$ and the components of $\overline{m}$ are integers and $v \in \left\{ \mathfrak{s}_{p^r}, \mathfrak{t}_p \mid p \in \mathbb{P}, r \geq 1 \right\}$---that satisfies condition (A).
    We define $V_B$ similarly for condition (B).
    For each $l \in I'+(c)+J'$, we define possibly infinite tuples:
    \begin{gather*}
        \overline{t}_A(\overline{a}_l) = \left( v\left( \overline{m} \cdot \overline{a}_l - \overline{m} \cdot \overline{a}_z \right) \right)_{(m,\overline{m},v) \in V_A};\\
        \overline{t}_B(b) = \left( v\left( mb - \overline{m} \cdot \overline{a}_z \right) \right)_{(m,\overline{m},v) \in V_B},
    \end{gather*}
    where $z = z_{m,\overline{m},v} \in \left\{ i_0,j_0 \right\}$ is the corresponding index.

    In the structure $G_\mathcal{A}$ defined in \autoref{lem:AuxiliaryDistality}, the sequence $\left( \overline{t}_A(\overline{a}_l) \right)_{l \in I'+(c)+J'}$ is indiscernible, while the subsequence $\left( \overline{t}_A(\overline{a}_l) \right)_{l \in I'+J'}$ is indiscernible over $\overline{t}_B(b)$.
    By the distality of $G_\mathcal{A}$, the entire sequence is indiscernible over $\overline{t}_B(b)$.
    Therefore, for any fixed $l \in I'+J'$, we obtain partial functions $(f_{\alpha})_{\alpha \in \mathcal{A}}$ that is elementary in $G_\mathcal{A}$ by setting $f_{\alpha}\left( v\left( mb - \overline{m} \cdot \overline{a}_c \right) \right) = v\left( mb - \overline{m} \cdot \overline{a}_l \right)$ for each triple $(m,\overline{m},v)$.
    We also define $f_G$ by $f_G\left( mb - \overline{m} \cdot \overline{a}_c \right) = mb - \overline{m} \cdot \overline{a}_l$.
    \autoref{prop:FunctionRelativeQE} then implies that $f = (f_G,(f_{\alpha})_{\alpha \in \mathcal{A}})$ is elementary with respect to the full language, and thus $\mathrm{tp}(\overline{a}_c / b) = \mathrm{tp}(\overline{a}_l / b)$, which completes the proof of \autoref{thm:Distality}.

    \begin{proof}[Proof of Claim 1] \renewcommand{\qedsymbol}{$\blacksquare_{\mathrm{Claim}}$}
        We fix integers $m$ and $\overline{m}$.
        First, we establish (*) for the relations $=$ and $>$.
        If the sequence $(\overline{m} \cdot \overline{a}_l)_{l \in I+(c)+J}$ is constant, the result is trivial.
        Otherwise, by the assumptions of indiscernibility, either $mb > (\overline{m} \cdot \overline{a}_l)_{l \in I+(c)+J}$ or $mb < (\overline{m} \cdot \overline{a}_l)_{l \in I+(c)+J}$ holds.
        In either case, (*) holds for $=$ and $>$.

        We now prove (*) for the predicates $=_{\bullet} k_{\bullet}$, $\equiv_{p^r,\bullet} k_{\bullet}$, and $D_{p^r}^{[p^s]}$, as well as (**).
        Let $v$ be either $\mathfrak{s}_{p^r}$ or $\mathfrak{t}_p$.
        Following \autoref{rmk:ValueOfIndiscernibles}, we consider the following cases:

        \setcounter{case}{0}

        \begin{case}
            $\left( v\left( \overline{m} \cdot \overline{a}_l - \overline{m} \cdot \overline{a}_{i_0} \right) \right)_{l \in I'+(c)+J'}$ is strictly increasing.

            By indiscernibility, $v\left( mb - \overline{m} \cdot \overline{a}_{i_0} \right)$ must be either greater than or less than all elements in the sequence $\left( v\left( \overline{m} \cdot \overline{a}_l - \overline{m} \cdot \overline{a}_{i_0} \right) \right)_{l \in I'+(c)+J'}$.

            In the ``greater'' case, \autoref{lem:AddingInfinitesimal} implies $v\left( mb - \overline{m} \cdot \overline{a}_l \right) = v\left( mb - \overline{m} \cdot \overline{a}_{i_0} \right)$ for any $l \in I'+(c)+J'$.
            Also, if $v = \mathfrak{t}_2$ (resp. $\mathfrak{s}_{p^r}$), then $P(mb - \overline{m} \cdot \overline{a}_l) \iff P(mb - \overline{m} \cdot \overline{a}_{i_0})$ for any $l \in I'+(c)+J'$, where $P(x)$ is $x =_{\bullet} k_{\bullet}$ (resp. $x \equiv_{p^r,\bullet} k_{\bullet}$ or $D_{p^r}^{[p^s]}(x)$ for any $s \geq r$).

            In the ``less'' case, we have $v\left( mb - \overline{m} \cdot \overline{a}_l \right) = v\left( \overline{m} \cdot \overline{a}_l - \overline{m} \cdot \overline{a}_{i_0} \right)$ for any $l \in I'+(c)+J'$.
            If $v = \mathfrak{t}_2$ (resp. $\mathfrak{s}_{p^r}$), then $P(mb - \overline{m} \cdot \overline{a}_l) \iff P(\overline{m} \cdot \overline{a}_{i_0} - \overline{m} \cdot \overline{a}_l)$ for any $l \in I'+(c)+J'$, where $P(x)$ is $x =_{\bullet} k_{\bullet}$ (resp. $x \equiv_{p^r,\bullet} k_{\bullet}$ or $D_{p^r}^{[p^s]}(x)$ for any $s \geq r$).
        \end{case}

        \begin{case}
            $\left( v\left( \overline{m} \cdot \overline{a}_l - \overline{m} \cdot \overline{a}_{j_0} \right) \right)_{l \in I'+(c)+J'}$ is strictly decreasing.

            The argument for this case is analogous to Case 1, using $j_0$ instead of $i_0$.
        \end{case}

        \begin{case}
            $v\left( \overline{m} \cdot \overline{a}_{l_1} - \overline{m} \cdot \overline{a}_{i_0} \right) = v\left( \overline{m} \cdot \overline{a}_{l_2} - \overline{m} \cdot \overline{a}_{i_0} \right) = v\left( \overline{m} \cdot \overline{a}_{l_2} - \overline{m} \cdot \overline{a}_{l_1} \right)$ for any $l_1 < l_2$ in $I'+(c)+J'$.

            We first suppose $v = \mathfrak{s}_{p^r}$.
            In this case, $\mathfrak{s}_{p^r}\left( \overline{m} \cdot \overline{a}_l - \overline{m} \cdot \overline{a}_{i_0} \right)$ must be $\emptyset$ for any $l \in I'+(c)+J'$.
            Indeed, if this value were some $H \in \mathcal{S}_p \setminus \left\{ \emptyset \right\}$, then the images of $\left( \overline{m} \cdot \overline{a}_l - \overline{m} \cdot \overline{a}_{i_0} \right)_{l \in I'+(c)+J'}$ in $R_H^{\mathfrak{s}_{p^r}}$ would all be distinct, contradicting $\autoref{lem:RprFinite}$.
            Thus, for any $l \in I'+(c)+J'$ and $s \geq r$, we have $\overline{m} \cdot \overline{a}_l - \overline{m} \cdot \overline{a}_{i_0} \in p^r G$, and hence:
            \begin{itemize}
                \item $\mathfrak{s}_{p^r}\left( mb - \overline{m} \cdot \overline{a}_l \right) = \mathfrak{s}_{p^r}\left( mb - \overline{m} \cdot \overline{a}_{i_0} \right)$.
                \item $P(mb - \overline{m} \cdot \overline{a}_l) \iff P(mb - \overline{m} \cdot \overline{a}_{i_0})$, where $P(x)$ is either $x \equiv_{p^r,\bullet} k_{\bullet}$ or $D_{p^r}^{[p^s]}(x)$.
            \end{itemize}

            Now suppose $v = \mathfrak{t}_p$, and let $H = \mathfrak{t}_p\left( \overline{m} \cdot \overline{a}_l - \overline{m} \cdot \overline{a}_{i_0} \right)$ for an arbitrary $l \in I'+(c)+J'$.
            If $\mathfrak{t}_p\left( mb - \overline{m} \cdot \overline{a}_{i_0} \right) \neq H$, then the argument is the same as in Case 1.
            We therefore assume $\mathfrak{t}_p\left( mb - \overline{m} \cdot \overline{a}_{i_0} \right) = H$.
            Recall that the order on $G$ induces a linear order on the rib $R_H^{\mathfrak{t}_p} = H^+ / H$ (\autoref{lem:CoverAndValue}).
            The images of $\left( \overline{m} \cdot \overline{a}_l - \overline{m} \cdot \overline{a}_{i_0} \right)_{l \in I'+(c)+J'}$ in $R_H^{\mathfrak{t}_p}$ are distinct, and hence strictly increasing or strictly decreasing by indiscernibility.
            Moreover, the image of $mb - \overline{m} \cdot \overline{a}_{i_0}$ is either greater than or less than all these images.
            It follows that $mb - \overline{m} \cdot \overline{a}_l \notin H$ for any $l \in I'+(c)+J'$.
            Equivalently, $\mathfrak{t}_p\left( mb - \overline{m} \cdot \overline{a}_l \right) = H$.
            Similarly, if $v = \mathfrak{t}_2$ and $G / H$ is discrete, then $(mb - \overline{m} \cdot \overline{a}_l) / H \neq k_H$, and hence $mb - \overline{m} \cdot \overline{a}_l \neq_{\bullet} k_{\bullet}$ for any $l \in I'+(c)+J'$.
            \qedhere
        \end{case}
    \end{proof}

    This completes the proof.
\end{proof}
\subsection{Non-distality}
\label{sec:NonDistality}

We first present two preparatory lemmas concerning indiscernible sequences.
Although we present the statements assuming that the index set is $\mathbb{Q}$ for notational simplicity, these lemmas hold for any linearly ordered index set.

\begin{lem}\label{lem:IndisIndependence}
    Let $p \in \mathbb{P}$, $H \in \mathcal{S}_p \setminus \left\{ \emptyset \right\}$, and $r \geq 1$.
    If $(a_i)_{i \in \mathbb{Q}}$ is an indiscernible sequence such that $\mathfrak{s}_{p^r}(a_i) = H$ and $\mathfrak{s}_p(a_j - a_i) = H$ for all $i < j$ in $\mathbb{Q}$, then the equivalent conditions in \autoref{lem:LinearlyIndependentElements} (1) hold for this sequence.
\end{lem}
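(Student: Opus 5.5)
We verify condition (iii) of \autoref{lem:LinearlyIndependentElements} (1) for every finite increasing subtuple $a_{i_1},\dots,a_{i_k}$ ($i_1<\dots<i_k$ in $\mathbb{Q}$). First, these elements do lie in $H^{[p^r]}$, since $\mathfrak{s}_{p^r}(a_i)=H$ gives $a_i \in G^{\mathfrak{s}_{p^r}}_{\leq H}=H^{[p^r]}$ by \autoref{lem:CoverAndValue} (1).

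Note that we cannot work in $R_H^{\mathfrak{s}_p}$ with single elements: the hypothesis $\mathfrak{s}_{p^r}(a_i)=H$ alone does not prevent $a_i \in H+pG$. So the plan is to argue with $(\mathbb{Z}/p^r\mathbb{Z})$-independence in $R_H^{\mathfrak{s}_{p^r}}=H^{[p^r]}/(H+p^rG)$, and to extract the contradiction from the differences $a_j-a_i$, which are controlled by the hypothesis $\mathfrak{s}_p(a_j-a_i)=H$.

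Suppose the conclusion fails for some increasing tuple. Then there are integers $m_1,\dots,m_k$, not all divisible by $p^r$, with $\sum_j m_j a_{i_j} \in H+p^rG$. Terms with $p^r \mid m_j$ already lie in $p^rG$ and can be discarded. Hence we may choose such a relation of minimal length $k\geq 1$ in which every coefficient satisfies $v_p(m_j)<r$. By indiscernibility, the same relation (same coefficients) holds for every increasing $k$-tuple from the sequence, because membership in $H+p^rG$ is expressible by a formula. This uses the uniform definability of $\mathfrak{s}_{p^r}$, or simply $x \equiv_{p^r}$ combined with $H$ being $\mathfrak{s}_{p^r}(a_{i_1})$.

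Now pick $i_k' > i_k$. Both $(a_{i_1},\dots,a_{i_{k-1}},a_{i_k})$ and $(a_{i_1},\dots,a_{i_{k-1}},a_{i_k'})$ satisfy the relation. Subtracting the two gives
\[
m_k(a_{i_k'}-a_{i_k}) \in H+p^rG .
\]
Write $m_k=p^t u$ with $t<r$ and $p\nmid u$. Multiplying by an inverse of $u$ modulo $p^r$ yields $p^t(a_{i_k'}-a_{i_k}) \in H+p^tp^{r-t}G$. By \autoref{rmk:ConvexityConsequence}, this gives $a_{i_k'}-a_{i_k}\in H+p^{r-t}G\subseteq H+pG$. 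Therefore $\mathfrak{s}_p(a_{i_k'}-a_{i_k})<H$, contradicting the hypothesis.

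This argument covers $k=1$ as well, so no separate base case is needed. It shows that (iii) holds for every finite subtuple, and hence all the equivalent conditions of \autoref{lem:LinearlyIndependentElements} (1) hold.

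The only delicate point is the bookkeeping: we must minimize over relations with all $v_p(m_j)<r$, so that the last coefficient $m_k$ is guaranteed to be nonzero modulo $p^r$ when we take the difference. Everything else is routine.
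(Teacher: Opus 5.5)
Your proof is correct and is essentially the paper's argument: use indiscernibility to move one index, subtract the two resulting relations, and contradict $\mathfrak{s}_p(a_j - a_i) = H$. The paper verifies condition (ii) rather than (iii). It picks a coefficient $m_t$ prime to $p$ and moves that index to $t + 1/2$, so it never needs your division by $p^t$ via \autoref{rmk:ConvexityConsequence}. Also, your minimal-length bookkeeping is unnecessary: discarding the terms with $p^r \mid m_j$ already guarantees $v_p(m_k) < r$.
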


\begin{proof}
    $a_i \in G_{\leq H}^{\mathfrak{s}_{p^r}} = H^{[p^r]}$ for all $i \in \mathbb{Q}$.
    Let $\overline{m} = (m_1,\dots,m_k)$ be integers such that $p \nmid m_t$ for some index $t$.
    Let $\bar{\imath} = (1,\dots,k)$, and suppose toward a contradiction that $\mathfrak{s}_p(\overline{m} \cdot a_{\bar{\imath}}) < H$.
    Let $\bar{\imath}'$ be the tuple obtained by replacing $t$ in $\bar{\imath}$ with $t + (1 / 2)$.
    By indiscernibility, we have $\mathfrak{s}_p(\overline{m} \cdot a_{\bar{\imath}'}) < H$, which in turn implies $\mathfrak{s}_p(a_{t + (1 / 2)} - a_t) = \mathfrak{s}_p(m_t (a_{t + (1 / 2)} - a_t)) < H$, a contradiction.
\end{proof}

\begin{lem}\label{lem:TotalIndis}
    Let $p \in \mathbb{P}$, $H \in \mathcal{S}_p \setminus \left\{ \emptyset \right\}$, and $r \geq 1$.
    Let $(a_i)_{i \in \mathbb{Q}}$ be an indiscernible sequence from $H^{[p^r]}$ satisfying the equivalent conditions in \autoref{lem:LinearlyIndependentElements} (1).
    Take arbitrary integers $\overline{m} = (m_1,\dots,m_l)$ and a permutation $\sigma$ of $\left\{ 1,\dots,l \right\}$.
    Let $\bar{\imath} = (1,\dots,l)$ and $\sigma(\bar{\imath}) = (\sigma(1),\dots,\sigma(l))$.
    Then:
    \begin{itemize}
        \item $\mathfrak{s}_{p^r}(\overline{m} \cdot a_{\bar{\imath}}) = \mathfrak{s}_{p^r}(\overline{m} \cdot a_{\sigma(\bar{\imath})})$.
        \item $P(\overline{m} \cdot a_{\bar{\imath}}) \iff P(\overline{m} \cdot a_{\sigma(\bar{\imath})})$, where $P(x)$ is either $x \equiv_{p^r,\bullet} k_{\bullet}$ or $D_{p^r}^{[p^s]}(x)$ for any $s \geq r$.\footnote{One can also deduce this from the stability of every abelian group in the pure group language.}
    \end{itemize}
\end{lem}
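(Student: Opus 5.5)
The plan is to reduce both assertions to the behaviour of the images of the $a_i$ in the quotient $G/(H + p^r G)$, and then use indiscernibility to handle transpositions of adjacent indices.

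\textbf{Reduction to cosets.} Let $e = \min_t v_p(m_t)$.

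If $e \geq r$, then $\overline{m} \cdot a_{\bar{\imath}}$ and $\overline{m} \cdot a_{\sigma(\bar{\imath})}$ both lie in $p^r G$. So both have $\mathfrak{s}_{p^r}$-value $\emptyset$. Both predicates then fail on both elements: by \autoref{lem:UniqueWitness} (2),(3) a witness would have to equal $\mathfrak{s}_{p^r}(x) = \emptyset$, which is not a convex subgroup (note $p^r \nmid k$ since $1 \le k < p^r$).

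If $e < r$, condition (iv) of \autoref{lem:LinearlyIndependentElements} (1) gives value $H$ for both sums, which settles the first bullet. For the second bullet, \autoref{lem:UniqueWitness} shows that, for $x$ with $\mathfrak{s}_{p^r}(x) = H$:
\begin{itemize}
\item $D_{p^r}^{[p^s]}(x)$ holds iff $x \in S := H^{[p^s]} + p^r G$;
\item $x \equiv_{p^r,\bullet} k_\bullet$ holds iff $G/H$ is discrete and $x \in C$, where $C$ is the preimage of $k_H + p^r(G/H)$, a coset of $H + p^r G$.
\end{itemize}
So in every case $P$ amounts to membership in a coset $C$ of a subgroup $S$ that contains $H + p^r G$. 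Both $S$ and $C$ are definable from $H = \mathfrak{s}_p(a_j - a_i)$, hence over any two elements of the sequence.

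\textbf{Adjacent transpositions.} Every permutation is a product of transpositions swapping which of two positions $t, t'$ receives the two indices that are adjacent (in the order of $\mathbb{Q}$) among those used. Using indiscernibility of $(a_i)_{i\in\mathbb{Q}}$, I would move the indices so that the two swapped indices lie in an open interval containing no other used index. Write $c$ for the sum of the remaining terms, and put
\[
T(i,j) \quad\text{for the truth value of}\quad c + m_t a_i + m_{t'} a_j \in C,
\]
with $i, j$ ranging in that interval. By indiscernibility over the remaining indices, $T(i,j) = \alpha$ for all $i < j$ in the interval. The goal is to show $T(j,i) = \alpha$.

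Suppose $\alpha$ holds, and take $i < j < k$ in the interval. Subtracting members of $C$ gives elements of $S$:
\[
m_t(a_i - a_j) \in S \quad\text{and}\quad m_{t'}(a_j - a_k) \in S.
\]
Membership of $m(a_u - a_v)$ in $S$ is a formula in $a_u, a_v$ and the parameters defining $S$. By indiscernibility (with $S$ defined from a pair of sequence elements chosen outside the interval), this gives $m_{t'}(a_i - a_j) \in S$ as well. Hence
\[
c + m_t a_j + m_{t'} a_i = \bigl(c + m_t a_j + m_{t'} a_k\bigr) + m_{t'}(a_i - a_j) + m_{t'}(a_j - a_k) \in C,
\]
so $T(j,i)$ holds. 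If $\alpha$ fails but some reversed pair lies in $C$, apply the same argument to the reversed sequence $(a_{-i})_{i \in \mathbb{Q}}$, which is again indiscernible and still satisfies the hypotheses. This forces $\alpha$ to hold, a contradiction.

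\textbf{Main obstacle.} The delicate step is the bookkeeping of parameters. $S$ and $C$ depend on $H$, so I must check that $H$, and hence $S$ and $C$, can be defined from elements of the sequence lying outside the interval being permuted, so that the indiscernibility moves above are legitimate. The remaining steps, namely the reduction via \autoref{lem:UniqueWitness} and the affine closure of cosets, are routine.
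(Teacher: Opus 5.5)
Your proof is correct, but it takes a different route from the paper's.

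The paper does not decompose $\sigma$ at all. It fixes an index $t$ with $v_p(m_t)$ minimal and compares $\overline{m}\cdot a_{\bar{\imath}}$ with $\overline{m}\cdot a_{\bar{\imath}'}$, where $\bar{\imath}'$ replaces $t$ by $t+\frac12$.
\begin{itemize}
\item For $x\equiv_{p^r,\bullet}k_\bullet$, this shows the predicate never holds on such combinations: otherwise $m_t(a_{t+1/2}-a_t)\in H+p^rG$, contradicting condition (iv).
\item For $D^{[p^s]}_{p^r}$, it yields $p^{v_p(m_t)}(a_k-a_j)\in H^{[p^s]}+p^rG$ for all $j<k$. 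Since every $m_u$ is a multiple of $p^{v_p(m_t)}$, both sides then reduce to $(m_1+\dots+m_l)a_1\in H^{[p^s]}+p^rG$, which is visibly symmetric.
\end{itemize}

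Your adjacent-transposition argument instead proves a general fact. Membership of $\overline{m}\cdot a_{\bar{\imath}}$ in a fixed coset of a subgroup is invariant under permuting the indices of an indiscernible sequence; this is essentially the stability remark in the footnote, made explicit. You use linear independence only to pin $\mathfrak{s}_{p^r}$ to $H$, so that $P$ becomes coset membership.

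The paper's route is shorter and gives sharper information. Yours is uniform in $P$ and does not need the minimal-valuation trick.

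The bookkeeping you flag as the main obstacle is harmless. $P$ itself is $\emptyset$-definable. The condition $m(a_u-a_v)\in S$ can be written as $m(a_u-a_v)\in\mathfrak{s}_p(a_u-a_v)^{[p^s]}+p^rG$ (respectively $\mathfrak{s}_p(a_u-a_v)+p^rG$), which is a formula in $a_u,a_v$ alone. So no outside parameters are needed.
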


\begin{proof}
    The first assertion holds because:
    \begin{equation*}
        \mathfrak{s}_{p^r}(\overline{m} \cdot a_{\bar{\imath}}) = \mathfrak{s}_{p^r}(\overline{m} \cdot a_{\sigma(\bar{\imath})}) = 
        \begin{cases}
            H & (\min \left\{ v_p(m_1),\dots,v_p(m_l) \right\} < r)\\
            \emptyset & (\text{otherwise})
        \end{cases}.
    \end{equation*}

    For the second assertion, we may assume $\min \left\{ v_p(m_1),\dots,v_p(m_l) \right\} < r$, as the statement is trivial otherwise.
    Let $t$ be an index such that $v_p(m_t)$ is minimal, and let $\bar{\imath}'$ be the tuple $\bar{\imath}$ with $t$ replaced by $t + (1 / 2)$.

    If $\overline{m} \cdot a_{\bar{\imath}} \equiv_{p^r,\bullet} k_{\bullet}$, then by indiscernibility, $\overline{m} \cdot a_{\bar{\imath}'} \equiv_{p^r,\bullet} k_{\bullet}$.
    This implies $m_t (a_{t + (1/2)} - a_t) \in H + p^r G$, which contradicts $\mathfrak{s}_{p^r}(m_t (a_{t + (1/2)} - a_t)) = H$.
    It follows similarly that $\overline{m} \cdot a_{\sigma(\bar{\imath})} \not\equiv_{p^r,\bullet} k_{\bullet}$.

    Similarly, if $D_{p^r}^{[p^s]}(\overline{m} \cdot a_{\bar{\imath}})$ holds, then $m_t (a_{t + (1/2)} - a_t) \in H^{[p^s]} + p^r G$, and thus $p^{v_p(m_t)} (a_k - a_j) \in H^{[p^s]} + p^r G$ for every $j < k$ in $\mathbb{Q}$.
    Then, $D_{p^r}^{[p^s]}\left( \overline{m} \cdot a_{\sigma(\bar{\imath})} \right) \iff (m_1 + \dots + m_l) a_1 \in H^{[p^s]} + p^r G \iff D_{p^r}^{[p^s]}\left( \overline{m} \cdot a_{\bar{\imath}} \right)$.
    A similar argument shows that $D_{p^r}^{[p^s]}\left( \overline{m} \cdot a_{\sigma(\bar{\imath})} \right)$ implies $D_{p^r}^{[p^s]}(\overline{m} \cdot a_{\bar{\imath}})$.
\end{proof}

\begin{dfn}
    For each $p \in \mathbb{P}$, we say $\mathcal{S}_p$ is \emph{bounded} if there exists $a \in G$ such that $a \notin H$ for all $H \in \mathcal{S}_p$.
\end{dfn}

If $\mathcal{S}_p$ is bounded, then $\mathcal{T}_p$ has a maximum element.
Conversely, if $\mathcal{S}_p$ is unbounded, then $\mathcal{T}_p$ is also unbounded.

\begin{thm}
    Let $G$ be an ordered abelian group.
    Suppose there exists $p \in \mathbb{P}$ such that for every $N \geq 1$, there exists some $H \in \mathcal{S}_p \setminus \left\{ \emptyset \right\}$ satisfying $|R_H^{\mathfrak{s}_p}| > N$.
    Then $G$ is not distal.
\end{thm}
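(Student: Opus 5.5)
We pass to a monster model and use compactness to find $H \in \mathcal{S}_p \setminus \{\emptyset\}$ with $|R_H^{\mathfrak{s}_p}| = \infty$. The unbounded sizes for $p$ are expressible by first-order formulas uniformly in $H = \mathfrak{s}_p(a)$, since $\mathfrak{s}_p$ and $H^{[p]}$ are uniformly definable. So the type saying that there are infinitely many elements with pairwise differences of $\mathfrak{s}_p$-value exactly $\mathfrak{s}_p(x)$ is consistent. We then choose $(a_i)_{i\in\mathbb{Q}}$ with $\mathfrak{s}_p(a_i)=\mathfrak{s}_p(a_j-a_i)=H$ for $i<j$, and by Ramsey and compactness we may assume it is indiscernible. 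By \autoref{lem:IndisIndependence} with $r=1$, the images of the $a_i$ in $R_H^{\mathfrak{s}_p}$ are $\mathbb{F}_p$-linearly independent.

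The witness to non-distality is $b = a_0$, used together with a sequence that skips index $0$. Concretely, take $I=\mathbb{Q}_{<0}$ and $J=\mathbb{Q}_{>0}$, and consider the sequence $(a_k)_{k\in I+(c)+J}$ with $a_c$ a fresh element realizing the same configuration. For instance, reindex so that the original sequence is $(a_k)_{k\in I+(c)+J}$ and pick $b$ to be another member $a_{d}$ of a longer indiscernible sequence extending it, placed inside a copy of $\mathbb{Q}$ at the position of $c$. The cleanest version is as follows. Let $(a_k)_{k\in I+(c)+J}$ be indiscernible as above, and set $b = a_c$. Then $(a_k)_{k\in I+J}$ is indiscernible over $b$, while $\mathrm{tp}(a_c/b) \neq \mathrm{tp}(a_k/b)$ for $k\ne c$, because $a_c - b = 0$ but $a_k - b \neq 0$. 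That witness is trivial and does not work in general (in an ordered structure the sequence is increasing, so $b=a_c$ breaks indiscernibility of $I+J$ over $b$ via order). So instead we use $b = \sum$ of finitely many $a_k$, or a sum taken in the stable part. The plan is to take $b = a_c$ modulo order effects, namely $b = a_c + d$ where $d$ adjusts the order. Most concretely, we aim for the formula $\varphi(x,y)$: $\mathfrak{s}_p(x-y) < H$, i.e. $x - y \in H + pG$. We will find $b$ with $b - a_c \in H+pG$ but $b - a_k \notin H+pG$ for $k\neq c$. Take $b = a_c + h$ with $h \in H$ chosen larger in absolute value than every element of $\langle a_k - a_c \rangle$ modulo nothing. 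This fails, since $h\in H$ and the $a_k\notin H$. Instead we choose $b\equiv a_c \pmod{H+pG}$ but lying above the whole sequence in the order, which is possible by saturation: take $b = a_c + p g$ with $g$ huge.

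Next we must verify that $(a_k)_{k\in I+J}$ is indiscernible over $b$. By \autoref{prop:FunctionRelativeQE} and \autoref{rmk:EasierRelativeQE}, it suffices to control, for integer tuples, the order, the predicates, and the values $\mathfrak{s}_{p'^r},\mathfrak{t}_{p'}$ of the terms $mb - \overline{m}\cdot a_{\bar\imath}$, as $\bar\imath$ ranges over increasing tuples from $I+J$. Write $mb - \overline m\cdot a_{\bar\imath} = m(a_c+pg) - \overline m\cdot a_{\bar\imath}$. The contribution $mpg$ dominates the order and, since $g$ is chosen generic (e.g.\ with all relevant valuations of $pg$ above those coming from the $a$'s, or with $g$ realizing a suitable type over the sequence via compactness), it dominates the $\mathfrak{t}$-values and $\mathfrak{s}_{p'}$-values for $p'\neq p$ where applicable. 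The $p$-part then reduces to terms $\overline{m}'\cdot a_{\bar\jmath}$ where $c$ appears among the indices of $\bar\jmath$. By \autoref{lem:TotalIndis}, the values $\mathfrak{s}_{p^r}$ and the predicates $\equiv_{p^r,\bullet}k_\bullet$ and $D^{[p^s]}_{p^r}$ are invariant under permutations of the indices, so they do not depend on where $c$ sits relative to $\bar\imath$. This is the stable behaviour the argument relies on.

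The main obstacle is constructing $g$ consistently. We need $g$ such that $pg$ is order-dominant and all the auxiliary-sort data of the combined terms depend only on the order type of $\bar\imath$. We would obtain $g$ by compactness: take $g$ satisfying $\langle a_k\rangle^{\mathrm{conv}} < \langle g\rangle^{\mathrm{conv}}$ for all $k$, and use \autoref{lem:AddingInfinitesimal}, since $v(mpg)>v(\text{rest})$ whenever the valuations separate. When $v(pg)$ cannot exceed everything (e.g.\ $\mathcal{S}_{p'}$ bounded), we instead take $g=0$ and handle order by replacing $b$ with $a_c$ shifted by an element of $H$, using the fact that the $a_k$ are pairwise far apart. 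Once indiscernibility over $b$ is established, the formula $b - x \in H + pG$, definable over the parameter $a_c$ through $H=\mathfrak{s}_p(a_c)$ and expressible as $\mathfrak{s}_p(b-x)<\mathfrak{s}_p(x)$, holds for $x=a_c$ and fails for every $a_k$ with $k\neq c$. This contradicts distality.
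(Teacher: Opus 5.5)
Your overall shape is right and matches the paper's: take an indiscernible sequence whose images in $R_H^{\mathfrak{s}_p}$ are $\mathbb{F}_p$-independent, and a witness $b\equiv_p a_c$ separated from the other $a_k$ by $\mathfrak{s}_p(b-x)<H$. But the whole content of the theorem is making $(a_k)_{k\in I+J}$ indiscernible over such a $b$, and the proposal never achieves this.

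Taking $b=a_c+pg$ with $\langle g\rangle^{\mathrm{conv}}$ above all $a_k$ controls the order and the $\mathfrak{t}_q$-data, since $\mathfrak{t}_q(x)$ depends only on $\langle x\rangle^{\mathrm{conv}}$. It does not control the $\mathfrak{s}_{q^s}$-data. Nothing prevents $g\in qG$ for a prime $q\neq p$, and then $\mathfrak{s}_q(b-a_k)=\mathfrak{s}_q(a_c-a_k)$. This can be constant on one side of $c$ and vary with $k$ on the other (the strictly monotone cases of Remark~\ref{rmk:ValueOfIndiscernibles}), which destroys indiscernibility over $b$.

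For $q=p$, part of the data does not see $g$ at all. If $p^{s-1}\mid m$ then $mpg\in p^sG$, so the level-$p^s$ data of $mb-\overline m\cdot a_{\bar\imath}$ are those of $ma_c-\overline m\cdot a_{\bar\imath}$. You obtain their invariance from Lemma~\ref{lem:TotalIndis}, but that lemma needs $\mathbb{Z}/p^s\mathbb{Z}$-independence in $R_H^{\mathfrak{s}_{p^s}}$. You have arranged this only for $s=1$, and no infinite sequence has it once $(H^{[p^s]}+pG)/(H+pG)$ is finite (Lemma~\ref{lem:LinearlyIndependentElements}(2)).

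Your fallback ``$g=0$ and shift by an element of $H$'' is the construction you had already refuted: $a_k-a_c\notin H$, so such a $b$ lies above $(a_k)_{k\in I}$ and below $(a_k)_{k\in J}$. Finally, even if every value depended only on the order type of $\bar\imath$, Proposition~\ref{prop:FunctionRelativeQE} would still require the induced map on the auxiliary sorts to be elementary, and you do not address this.

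The missing idea is to calibrate $b$ against an extra top element $a_\infty$ of the sequence rather than against the $a_k$ themselves. The paper takes an indiscernible $(a_i)_{i\in\mathbb{Q}+(\infty)}$ with two properties:
\begin{itemize}
\item it is independent at every level $s\le r$, where $r=\sup\{s : |(H^{[p^s]}+pG)/(H+pG)|=\infty\}$;
\item it satisfies the order and $\mathfrak{t}_q$-domination conditions (Condition~2).
\end{itemize}
When $r=\infty$, Lemma~\ref{lem:CRT}(2) inside $\mathfrak{t}^+_\infty(a_\infty)$ and compactness give $a'_\infty$ with $a'_\infty\equiv_{p^s}a_0$ for all $s$, $a'_\infty\equiv_m a_\infty$ for all $m$ prime to $p$, and the same order and $\mathfrak{t}_q$-behaviour as $a_\infty$. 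Then every $L_{\mathrm{syn}}$-datum of $\overline m\cdot a_{\bar\imath}+ma'_\infty$ literally coincides with that of $\overline m\cdot a_{\bar\imath}+ma_\infty$; the $p$-part now legitimately uses Lemma~\ref{lem:TotalIndis}. So Proposition~\ref{prop:FunctionRelativeQE} applies with the identity on the auxiliary sorts and gives $\mathrm{tp}(a'_\infty/(a_i)_{i\neq0})=\mathrm{tp}(a_\infty/(a_i)_{i\neq0})$. Indiscernibility of $(a_i)_{i\neq 0}$ over $a_\infty$ is free.

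When $r<\infty$, an extra step is needed at exactly the levels $s>r$ where your appeal to Lemma~\ref{lem:TotalIndis} fails. Claim~3 shows the relevant $\mathfrak{s}_{p^s}$-values lie strictly above $H$. Then $a_0$ is replaced by $a_0-p^rc$ with $(a_0-p^rc)-a_\infty\in\widetilde H$, so that at those levels $a'_\infty$ behaves like $a_\infty$.
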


\begin{proof}
    Fix such $p$.
    We may assume that $G$ is a monster model.
    By compactness, there exists $H \in \mathcal{S}_p \setminus \left\{ \emptyset \right\}$ such that $|R_H^{\mathfrak{s}_p}| = \infty$.
    Fix such $H$ and consider the chain of subgroups:
    \begin{equation*}
        H + pG \subseteq \dots \subseteq H^{[p^3]} + pG \subseteq H^{[p^2]} + pG \subseteq H^{[p]} + pG = H^{[p]}.
    \end{equation*}
    Note that $|H^{[p]} / (H + pG)| = |R_H^{\mathfrak{s}_p}| = \infty$.
    We define
    \begin{equation*}
        r = \sup \left\{ s \geq 1 \;\middle|\; |(H^{[p^s]} + pG) / (H + pG)| = \infty \right\},
    \end{equation*}
    where $r$ may be infinite.
    
    \setcounter{clm}{0}
    \begin{clm}
        There exists an indiscernible sequence $(a_i)_{i \in \mathbb{Q}+(\infty)}$ satisfying the following two conditions:
        
        \setcounter{cnd}{0}
        \begin{cnd}
            For any positive integer $s \leq r$, we have $\mathfrak{s}_{p^s}(a_i) = H$ for all $i$, and the images of $(a_i)_{i \in \mathbb{Q}+(\infty)}$ in $R_H^{\mathfrak{s}_{p^s}}$ are $(\mathbb{Z} / p^s \mathbb{Z})$-linearly independent.
        \end{cnd}

        \begin{cnd}
            For any $i < j$ in $\mathbb{Q}+(\infty)$:
            \begin{itemize}
                \item $0 < n a_i < a_j$ for any $n \geq 1$.
                \item For each $q \in \mathbb{P}$:
                \begin{itemize}
                    \item If $\mathcal{S}_q$ is bounded, then $\mathfrak{t}_q(a_i) = \max (\mathcal{T}_q)$.
                    \item If $\mathcal{S}_q$ is unbounded, then $a_i \in \mathfrak{t}_q(a_j)$.
                    \item If $G / \mathfrak{t}_q(a_i)$ is discrete, then $a_i / \mathfrak{t}_q(a_i) > k_{\mathfrak{t}_q(a_i)}$ for any $k \geq 1$.
                \end{itemize}
            \end{itemize}
        \end{cnd}
    \end{clm}

    \begin{proof}[Proof of Claim 1]\renewcommand{\qedsymbol}{$\blacksquare_{\mathrm{Claim}}$}
        It suffices to construct a (possibly non-indiscernible) sequence satisfying these two conditions, as we may then apply Ramsey's theorem to obtain an indiscernible sequence while preserving both conditions.
        By \autoref{lem:LinearlyIndependentElements} and the definition of $r$ (and by compactness if $r = \infty$), it is easy to obtain a sequence $(b_i)_{i \in \mathbb{Q}+(\infty)}$ satisfying Condition 1.
        We use compactness to construct a sequence $(a_i)_{i \in \mathbb{Q}+(\infty)}$ such that $a_i \equiv_{p^s} b_i$ for all $i \in \mathbb{Q}+(\infty)$ and $s \geq 1$, and which satisfies Condition 2.
        If such a sequence is obtained, the congruence condition $a_i \equiv_{p^s} b_i$ ensures that $(a_i)_{i \in \mathbb{Q}+(\infty)}$ also satisfies Condition 1.

        Suppose we have already constructed elements $a_{i_1},\dots,a_{i_l}$.
        Since the coset $b_{i_{l+1}} + p^s G$ is unbounded for any $s \geq 1$, we can choose $a_{i_{l+1}}$ to be sufficiently large  such that $a_{i_{l+1}} \equiv_{p^s} b_{i_{l+1}}$.
        If $G / \mathfrak{t}_q(a_{i_{l+1}})$ is discrete and $a_{i_{l+1}} / \mathfrak{t}_q(a_{i_{l+1}}) = k_{\mathfrak{t}_q(a_{i_{l+1}})}$ for some $q \in \mathbb{P}$ and $k$, we can replace $a_{i_{l+1}}$ with $(p^s + 1) a_{i_{l+1}}$.
        This modification ensures that $a_{i_{l+1}} / \mathfrak{t}_q(a_{i_{l+1}}) > k_{\mathfrak{t}_q(a_{i_{l+1}})}$ while preserving the congruence $a_{i_{l+1}} \equiv_{p^s} b_{i_{l+1}}$.
        By compactness, we obtain the full sequence as required.
    \end{proof}

    Fix a sequence $(a_i)_{i \in \mathbb{Q}+(\infty)}$ given by Claim 1.
    To prove non-distality, we choose an element $a_{\infty}'$ such that $(a_i)_{i \in \mathbb{Q}}$ is not indiscernible over $a_{\infty}'$ and the following claim holds:

    \begin{clm}
        For any tuple of integers $\overline{m} = (m_1,\dots,m_l)$, any nonzero integer $m$, and any strictly increasing indices $\bar{\imath} = (i_1,\dots,i_l)$ from $\mathbb{Q} \setminus \left\{ 0 \right\}$:
        \begin{itemize}
            \item[(*)] $P(\overline{m} \cdot a_{\bar{\imath}} + m a_{\infty}') \iff P(\overline{m} \cdot a_{\bar{\imath}} + m a_{\infty})$, where $P(x)$ is one of the predicates $x = 0$, $x > 0$, $x =_{\bullet} k_{\bullet}$, $x \equiv_{q^s,\bullet} k_{\bullet}$, or $D_{q^s}^{[q^t]}(x)$.
            \item[(**)] $v(\overline{m} \cdot a_{\bar{\imath}} + m a_{\infty}') = v(\overline{m} \cdot a_{\bar{\imath}} + m a_{\infty})$ for any $v \in \left\{ \mathfrak{s}_{q^s}, \mathfrak{t}_q \mid q \in \mathbb{P}, s \geq 1 \right\}$.
        \end{itemize}
    \end{clm}

    We first see that the existence of such $a_{\infty}'$ implies non-distality.
    We define a family of partial functions $f = (f_G, (f_{\alpha})_{\alpha \in \mathcal{A}})$, where $\mathrm{Dom}(f_G)$ is the subgroup generated by $(a_i)_{i \in \mathbb{Q} \setminus \left\{ 0 \right\}}$ and $a_{\infty}'$, with $f_G(\overline{m} \cdot a_{\bar{\imath}} + m a_{\infty}') = \overline{m} \cdot a_{\bar{\imath}} + m a_{\infty}$.
    For each $\alpha \in \mathcal{A}$, we let $f_{\alpha}$ be the identity map on the whole $\alpha$.
    \autoref{prop:FunctionRelativeQE} then implies that $f$ is elementary; hence, $\mathrm{tp}(a_{\infty}' / (a_i)_{i \in \mathbb{Q} \setminus \left\{ 0 \right\}}) = \mathrm{tp}(a_{\infty} / (a_i)_{i \in \mathbb{Q} \setminus \left\{ 0 \right\}})$.
    It implies that $(a_i)_{i \in \mathbb{Q} \setminus \left\{ 0 \right\}}$ is indiscernible over $a_{\infty}'$.
    Since $(a_i)_{i \in \mathbb{Q}}$ itself is not indiscernible over $a_{\infty}'$, it follows that $G$ is not distal.
    We choose $a_{\infty}'$ by considering two cases.

    \setcounter{case}{0}
    \begin{case}
        $r = \infty$.

        We construct $a_{\infty}'$ such that:
        \begin{itemize}
            \item $a_{\infty}' \equiv_{p^s} a_0$ for every $s \geq 1$.
            \item $a_{\infty}' \equiv_{m} a_{\infty}$ for every $m$ coprime with $p$.
            \item $n a_i < a_{\infty}'$ for every $n \geq 1$ and every $i \in \mathbb{Q}$.
            \item $\mathfrak{t}_q(a_{\infty}') = \mathfrak{t}_q(a_{\infty})$ for every $q \in \mathbb{P}$.
            If $G / \mathfrak{t}_q(a_{\infty})$ is discrete, then $a_{\infty}' / \mathfrak{t}_q(a_{\infty}) > k_{\mathfrak{t}_q(a_{\infty})}$ for every $k \geq 1$.
        \end{itemize}
        Let $\mathfrak{t}_{\infty}^+(a_{\infty}) = \bigcap_{q \in \mathbb{P}} \mathfrak{t}_q^+(a_{\infty})$.
        For any $s \geq 1$ and any $m$ coprime with $p$, the intersection $(a_0 + p^s \mathfrak{t}_{\infty}^+(a_{\infty})) \cap (a_{\infty} + m \mathfrak{t}_{\infty}^+(a_{\infty}))$ is a coset of $p^s m \mathfrak{t}_{\infty}^+(a_{\infty})$ by \autoref{lem:CRT} (2).
        We can therefore choose its element that is arbitrarily large in $\mathfrak{t}_{\infty}^+(a_{\infty})$.
        By compactness, we obtain $a_{\infty}'$ with the required properties.
        Since $\mathfrak{s}_p(a_0 - a_{\infty}') = \emptyset$ while $\mathfrak{s}_p(a_i - a_{\infty}') = \mathfrak{s}_p(a_i - a_0) = H$ for $i \in \mathbb{Q} \setminus \left\{ 0 \right\}$, the sequence $(a_i)_{i \in \mathbb{Q}}$ is not indiscernible over $a_{\infty}'$
        
        \begin{proof}[Proof of Claim 2]\renewcommand{\qedsymbol}{$\blacksquare_{\mathrm{Claim}}$}
            Fix $\overline{m}$, $m$, and $\bar{\imath}$.
            We first establish (*) for $x = 0$, $x > 0$, and $x =_{\bullet} k_{\bullet}$, and (**) for $\mathfrak{t}_q$.
            For each $q \in \mathbb{P}$, if $\mathcal{S}_q$ is bounded, let $\widetilde{H} = \max (\mathcal{T}_q)$.
            Condition 2 and the choice of $a_{\infty}'$ imply that $a_{\infty} / \widetilde{H}$ and $a_{\infty}' / \widetilde{H}$ are greater than $n a_i / \widetilde{H}$ and $k_{\widetilde{H}}$ (if $G / \widetilde{H}$ is discrete) for any $n,k \geq 1$ and $i \in \mathbb{Q}$.
            It follows that if $m > 0$, then $(\overline{m} \cdot a_{\bar{\imath}} + m a_{\infty}) / \widetilde{H} > k_{\widetilde{H}}$ for any $k \in \mathbb{Z}$.
            The same holds for $a_{\infty}'$.
            Conversely, if $\mathcal{S}_q$ is unbounded, then $a_i \in \mathfrak{t}_q(a_{\infty}) = \mathfrak{t}_q(a_{\infty}')$ for any $i \in \mathbb{Q}$.
            Thus, $(\overline{m} \cdot a_{\bar{\imath}} + m a_{\infty}) / \mathfrak{t}_q(a_{\infty}) = m a_{\infty} / \mathfrak{t}_q(a_{\infty})$ and $(\overline{m} \cdot a_{\bar{\imath}} + m a_{\infty}') / \mathfrak{t}_q(a_{\infty}) = m a_{\infty}' / \mathfrak{t}_q(a_{\infty})$.
            The required statements follow immediately from these observations.

            Next, we establish (*) for $x \equiv_{q^s,\bullet} k_{\bullet}$ and $D_{q^s}^{[q^t]}(x)$, and (**) for $\mathfrak{s}_{q^s}$.
            If $q$ is not $p$, the statements are trivial since $a_{\infty}' \equiv_{q^s} a_{\infty}$ for all $s \geq 1$.
            Assume $q = p$.
            Since $a_{\infty}' \equiv_{p^s} a_0$ for all $s \geq 1$, it suffices to show the statements with $a_{\infty}'$ replaced by $a_0$, which follow from Condition 1 and \autoref{lem:TotalIndis}.
        \end{proof}
    \end{case}

    \begin{case}
        $r < \infty$.

        \begin{clm}
            For every $s > r$, every tuple of integers $\overline{m} = (m_1,\dots,m_l)$ with $\min \left\{ v_p(m_1),\dots,v_p(m_l) \right\} < s - r$, and every strictly increasing indices $\bar{\imath} = (i_1,\dots,i_l)$ from $\mathbb{Q}+(\infty)$, we have $\mathfrak{s}_{p^s}(\overline{m} \cdot a_{\bar{\imath}}) > H$.
        \end{clm}

        \begin{proof}[Proof of Claim 3] \renewcommand{\qedsymbol}{$\blacksquare_{\mathrm{Claim}}$}
            By \autoref{lem:EasyValuationProperties} (1) and (2), it suffices to show this for $s = r + 1$.
            Suppose $\mathfrak{s}_{p^{r+1}}(\overline{m} \cdot a_{\bar{\imath}}) = H$ toward a contradiction.
            Let $t$ be an index such that $p \nmid m_t$, and let $\bar{\imath}_u$ be $\bar{\imath}$ with $i_t$ replaced by $u \in (i_{t-1},i_t)$.
            Then, $\mathfrak{s}_{p^{r+1}}(\overline{m} \cdot a_{\bar{\imath}_u}) = H$ and $\mathfrak{s}_p(\overline{m} \cdot a_{\bar{\imath}_v} - \overline{m} \cdot a_{\bar{\imath}_u}) = \mathfrak{s}_p(m_t (a_v - a_u)) = H$ for any $u < v$ from $(i_{t-1},i_t)$.
            Since $(\overline{m} \cdot a_{\bar{\imath}_u})_{i_{t-1} < u < i_t}$ is indiscernible, \autoref{lem:IndisIndependence} implies $|(H^{[p^{r+1}]} + pG) / (H + pG)| = \infty$, contradicting the choice of $r$.
        \end{proof}

        By Claim 3 and compactness, there exists $\widetilde{H} \in \mathcal{S}_p$ such that $\widetilde{H} > H$ and $\widetilde{H} \leq \mathfrak{s}_{p^s}(\overline{m} \cdot a_{\bar{\imath}})$ for all $s > r$ and all such $\overline{m}$ and $\bar{\imath}$ as in Claim 3.
        Since $\mathfrak{s}_{p^r}(a_0 - a_{\infty}) = H$ and $H < \bigcap_{q \in \mathbb{P}} \mathfrak{t}_q^+(a_{\infty}) =: \mathfrak{t}_{\infty}^+(a_{\infty})$ (as $a_{\infty}$ is only in the right-hand side), there exists $c \in G$ such that $(a_0 - p^r c) - a_{\infty} \in \widetilde{H} \cap \mathfrak{t}_{\infty}^+(a_{\infty})$.
        As in Case 1 (noting that $a_0 - p^r c \in \mathfrak{t}_{\infty}^+(a_{\infty})$), take $a_{\infty}'$ satisfying:
        \begin{itemize}
            \item $a_{\infty}' \equiv_{p^s} a_0 - p^r c$ for every $s \geq 1$.
            \item $a_{\infty}' \equiv_{m} a_{\infty}$ for every $m$ coprime with $p$.
            \item $n a_i < a_{\infty}'$ for every $n \geq 1$ and every $i \in \mathbb{Q}$.
            \item $\mathfrak{t}_q(a_{\infty}') = \mathfrak{t}_q(a_{\infty})$ for every $q \in \mathbb{P}$. If $G / \mathfrak{t}_q(a_{\infty})$ is discrete, then $a_{\infty}' / \mathfrak{t}_q(a_{\infty}) > k_{\mathfrak{t}_q(a_{\infty})}$ for every $k \geq 1$.
        \end{itemize}
        As in the previous case, $(a_i)_{i \in \mathbb{Q}}$ is not indiscernible over $a_{\infty}'$.

        \begin{proof}[Proof of Claim 2]\renewcommand{\qedsymbol}{$\blacksquare_{\mathrm{Claim}}$}
            Fix $\overline{m}$, $m$, and $\bar{\imath}$.
            Following the same reasoning as in Case 1, it is clear that (*) holds for $x = 0$, $x > 0$, and $x =_{\bullet} k_{\bullet}$,
            and that (**) holds for $\mathfrak{t}_q$.

            We now focus on showing (*) for $x \equiv_{q^s,\bullet} k_{\bullet}$ and $D_{q^s}^{[q^t]}(x)$, as well as (**) for $\mathfrak{s}_{q^s}$.
            Since these conditions are trivial for $q \neq p$, we assume $q = p$.
            If $s \leq r$, we have $a_{\infty}' \equiv_{p^s} a_0$.
            The statements then follow from Condition 1 and \autoref{lem:TotalIndis}.
            Hence, we suppose $s > r$, and let $u = \min \left\{ v_p(m_1),\dots,v_p(m_l),v_p(m) \right\}$.
            If $u \geq s - r$, we use the following identities:
            \begin{itemize}
                \item $\mathfrak{s}_{p^s}(p^{s-r} x) = \mathfrak{s}_{p^r}(x)$.
                \item $\text{If $p^{s-r} \nmid k$, then } p^{s-r} x \not\equiv_{p^s,\bullet} k_{\bullet}$.
                \item $p^{s-r} x \equiv_{p^s,\bullet} (p^{s-r} k)_{\bullet} \iff x \equiv_{p^r,\bullet} k_{\bullet}$.
                \item $D_{p^s}^{[p^t]}(p^{s-r} x) \iff D_{p^r}^{[p^{t-(s-r)}]}(x)$.
            \end{itemize}
            The last one follows from \autoref{rmk:ConsequenceOfConvexityConsequence}.
            By these identities, the problem reduces to the previously established case where $s \leq r$.
            If $u < s - r$, then we have $\mathfrak{s}_{p^s}(\overline{m} \cdot a_{\bar{\imath}} + m a_{\infty}) \geq \widetilde{H}$.
            Since $a_{\infty}' - a_{\infty} = (a_{\infty}' - (a_0 - p^r c)) + ((a_0 - p^r c) - a_{\infty}) \in \widetilde{H} + p^s G$, the assertions follow from \autoref{lem:AddingInfinitesimal}.
        \end{proof}
    \end{case}

    This completes the proof.
\end{proof}

\section{Distal expansions}
\label{chp:DisExp}

\subsection{Refined valuations}

In this chapter, we prove the following theorem:

\begin{thm}\label{thm:DistalExpansion}
    Every ordered abelian group has a distal expansion.
\end{thm}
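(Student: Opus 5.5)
We plan to follow the strategy announced in the introduction: refine the valuations $\mathfrak{s}_{p^r}$ so that every rib becomes small, then rerun the proof of \autoref{thm:Distality} in the expanded language. We may assume $G$ is a monster model, or at least $\aleph_1$-saturated.

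\emph{Step 1: the refinement.} Fix a prime $p$ and $H \in \mathcal{S}_p \setminus \{\emptyset\}$. The rib $R_H^{\mathfrak{s}_p} = H^{[p]}/(H+pG)$ is an $\mathbb{F}_p$-vector space, and more precisely it is filtered by the images of $H^{[p^s]} + pG$. We choose a basis compatible with this filtration and put a linear order on the basis. This gives the valuation $v$ on $\mathbb{F}_p$-vector spaces from the introduction, $v((a_i)) = \max\{i \mid a_i \neq 0\}$. By the last remark of the introduction, $(\bigoplus_I \mathbb{F}_p, +, v)$ is distal, and we would prove this here with the same case analysis used for $G_{\mathcal{A}}$. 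We then glue these choices over all $H$ into a map $\mathfrak{s}'_p \colon G \to \mathcal{S}'_p$. Its value set is the lexicographic sum over $H \in \mathcal{S}_p$ of the chosen index orders, so $\mathfrak{s}'_p(a) = (\mathfrak{s}_p(a), v_H(a/(H+pG)))$. One checks that $\mathfrak{s}'_p$ is again a valuation in the sense of \autoref{lem:ValuationAxioms}, that it refines $\mathfrak{s}_p$, and that every rib of $\mathfrak{s}'_p$ has size exactly $p$.

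For $\mathfrak{s}_{p^r}$ we proceed similarly, using \autoref{lem:LinearlyIndependentElements} to transfer: a basis of $R_H^{\mathfrak{s}_{p^r}}$ as $\mathbb{Z}/p^r\mathbb{Z}$-independent elements, chosen compatibly for all $r$ via $\mathfrak{s}_p(a') = \mathfrak{s}_{p^r}(a)$ as in \autoref{lem:DifficultValuationProperties}(1). The aim is ribs of size $p^r$, i.e.\ bounded. The new auxiliary sorts $\mathcal{S}'_p$ carry the order, the projection to $\mathcal{S}_p$, and the predicates recording the filtration level.

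\emph{Step 2: relative quantifier elimination.} We extend \autoref{thm:RelativeQE} and \autoref{prop:FunctionRelativeQE} to the language $L'_{\mathrm{syn}}$, which adds the maps $\mathfrak{s}'_{p^r}$ and the new sorts. The plan is to redo the back-and-forth of Cluckers and Halupczok. Given a partial isomorphism satisfying the analogues of conditions (1)--(4), and closed under the new maps, we extend it by one element $a$. We split according to whether $a$ changes an $\mathfrak{s}'$-value inside an existing rib, or creates a new basis direction. Saturation and the freedom in choosing bases should let us realize the required type.

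This is the step we expect to be the main obstacle. The refined valuation depends on a non-canonical basis choice, and we must ensure both that the expanded theory is complete enough and that the auxiliary structure remains a colored order. A safer fallback is to add the refinement as an expansion of the auxiliary sorts only, namely a colored order on the disjoint union of the basis indices. We then prove that quantifier elimination relative to $\mathcal{A}'$ holds by checking that each new atomic condition on $G$ reduces, via \autoref{lem:AddingInfinitesimal}-type arguments, to conditions on values.

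\emph{Step 3: distality.} With relative quantifier elimination available, we repeat the proof of \autoref{thm:Distality} essentially verbatim. \autoref{lem:AuxiliaryDistality} applies to the new auxiliary structure, since it is again a union of colored orders with monotone projections, and its proof goes through unchanged. The only place where bounded ribs were used was Case 3 of Claim 1, where values $v(\overline{m}\cdot\overline{a}_l - \overline{m}\cdot\overline{a}_{i_0})$ are constant along the sequence. With $\mathfrak{s}'$, the ribs have size $p^r$, so a constant nonempty value forces finitely many distinct residues and hence, by indiscernibility, the value $\emptyset$, exactly as before. \autoref{lem:RprFinite} is replaced by the bound $p^r$ on rib sizes, and Cases 1 and 2 are unchanged. \autoref{prop:FunctionRelativeQE} in the expanded language then yields $\mathrm{tp}(\overline{a}_c/b) = \mathrm{tp}(\overline{a}_l/b)$.
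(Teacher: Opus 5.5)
You have the paper's overall strategy: refine the valuations so that ribs are bounded, prove relative quantifier elimination for the expansion, then rerun \autoref{thm:Distality}. However, Step 2 carries essentially all of the work, and as sketched it does not go through.

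\textbf{Coherence across $r$.} The refinements of $\mathfrak{s}_{p^r}$ for different $r$ have to be coherent with one another. In the paper they are all defined from one system of bases of the graded pieces $(H^{[p^k]}+pG)/(H^{[p^{k+1}]}+pG)$. These bases are indexed by a sort $\Sigma_p$, with the interleaved order on $\mathbb{N}_{<r}\times\Sigma_p$ of \autoref{fig:NewOrder}. They satisfy the axioms of \autoref{dfn:RefinedValuationAxioms}:
\begin{itemize}
\item every rib has size exactly $p$;
\item $\sigma_{p^{r+1}}(pa)=(\sigma_{p^r}^{\mathbb{N}}(a)+1,\sigma_{p^r}^{\Sigma}(a))$;
\item items (10)--(14) compute $G^{\sigma_{p^r}}_{\square\gamma}\cap p^{r-1}G$ and $G^{\sigma_{p^r}}_{\square\gamma}+p^{r-1}G$ in terms of $\sigma_p$ and $\sigma_{p^{r-1}}$.
\end{itemize}
This is exactly what the back-and-forth uses. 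Whether a finite part of the type of $a$ is realized in $G_2$ is decided by \autoref{lem:SwissCheese}, by induction on $r$. For that, the indices $|(K_0\cap p^{r-1}G)/(K_i\cap p^{r-1}G)|$ and the projected cosets $K_i+p^{r-1}G+d_i$ must be readable from the value sorts. Appealing to saturation and freedom in choosing bases does not replace this, since the bases are fixed once the expansion is chosen.

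\textbf{The level-$r$ refinement and existence.} Your description here is also off. $R_H^{\mathfrak{s}_{p^r}}$ is usually not free over $\mathbb{Z}/p^r\mathbb{Z}$: by \autoref{cor:LinearRepresentation} it is $\bigoplus_{k<r}(\mathbb{Z}/p^k\mathbb{Z})^{(I_k)}\oplus\bigoplus_{k\geq r}(\mathbb{Z}/p^r\mathbb{Z})^{(I_k)}$, so a basis of $\mathbb{Z}/p^r\mathbb{Z}$-independent elements need not exist. The paper obtains a coherent system for all $r$ at once by a depth-$N$ construction (valid for $r\le N$) followed by compactness, passing to an elementary extension. Your proposal does not say how the part $\bigcap_k(H^{[p^k]}+pG)/(H+pG)$ would be handled.

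\textbf{The fallback.} It cannot work. Structure placed only on auxiliary sorts, with no new map out of $G$, does not separate the elements of an infinite rib. Moreover, the non-distality witness of \autoref{sec:NonDistality} comes from a partial map that is the identity on the auxiliary sorts, so it respects any such added structure.

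\textbf{Discrete quotients.} Suppose $\mathfrak{s}_{p^r}(x)=H$ with $G/H$ discrete. Then the type of $x$ must record $\sigma_{p^r}(x-k_H)$. The predicate $x\equiv_{p^r,\bullet}k_\bullet$ only says whether $x-k_H\in H+p^rG$. When $\sigma_{p^r}(x)=\sigma_{p^r}(k_H)$, the value $\sigma_{p^r}(x-k_H)$ can take many different values below $\sigma_{p^r}(k_H)$. Meanwhile the values $\sigma_{p^{r'}}(mx)$ and the $G$-sort predicates can stay the same. So relative quantifier elimination fails without extra symbols, which is why the paper adds the maps $\sigma_{p^r,k}$ and uses \autoref{rmk:Sigmaandk} in the back-and-forth.

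\textbf{Consequence for Step 3.} Step 3 is therefore not verbatim. In the distality claim, $\sigma_{p^r,k}$ may satisfy neither (A) nor (B), but a new alternative (C): $\sigma_{p^r,k}(mb-\overline{m}\cdot\overline{a}_l)=\sigma_{p^r}(\overline{m}\cdot\overline{a}_z-\overline{m}\cdot\overline{a}_l)$. This occurs when all of the following hold:
\begin{itemize}
\item $\mathfrak{s}_{p^r}(mb-\overline{m}\cdot\overline{a}_{i_0})$ equals the common value $H$ of the differences;
\item $G/H$ is discrete;
\item $\sigma_{p^r}(mb-\overline{m}\cdot\overline{a}_{i_0}-k_H)$ lies below the increasing sequence $\sigma_{p^r}(\overline{m}\cdot\overline{a}_l-\overline{m}\cdot\overline{a}_{i_0})$.
\end{itemize}
Since (C) is a function of $\overline{a}_l$ alone, it can be absorbed into $\overline{t}_A$, but it has to be identified.

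In short, you have the right target. The relative quantifier elimination, which is the substance of the theorem, is not supplied, and your plan for it omits the coherence across $r$ and the maps $\sigma_{p^r,k}$ that make it true.
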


Our results in \autoref{chp:Distality} imply that the only source of non-distality is the presence of an infinite rib with respect to $\mathfrak{s}_p$.
Thus, our strategy is to refine each $\mathfrak{s}_{p^r}$ so that all the resulting ribs are of size $p$.
We will first describe how such refinements are constructed, and then axiomatize these expansions in \autoref{dfn:RefinedValuationAxioms}.
Fix $p \in \mathbb{P}$, $H \in \mathcal{S}_p \setminus \left\{ \emptyset \right\}$, and a positive integer $N$.
For each $1 \leq k < N$, let $\overline{a}_k \in H^{[p^k]}$ be a (possibly empty, possibly infinite) tuple whose images in $\left( H^{[p^k]} + pG \right) / \left( H^{[p^{k+1}]} + pG \right)$ form an $\mathbb{F}_p$-basis, and let $\overline{a}_N \in H^{[p^N]}$ be a tuple whose images in $\left( H^{[p^N]} + pG \right) / \left( H + pG \right)$ form an $\mathbb{F}_p$-basis.
Recall from \autoref{rmk:ConsequenceOfConvexityConsequence} that $p^s \overline{a}_k \in H^{[p^{k+s}]}$.

\begin{lem}
    In the above setting, the following hold for each $1 \leq r \leq N$:
    \begin{enumerate}
        \item For each $r \leq s < N$, every element in $\left( H^{[p^s]} + p^rG \right) / \left( H^{[p^{s+1}]} + p^rG \right)$ can be uniquely expressed as \begin{equation*}
            \left( \sum_{s-r < k \leq s} p^{s-k} \overline{m}_k \cdot \overline{a}_k \right) / \left( H^{[p^{s+1}]} + p^rG \right),
        \end{equation*}
        where $\overline{m}_k \in \mathbb{F}_p$ (which has only finitely many nonzero components; we omit this remark hereafter).
        \item Every element in $\left( H^{[p^N]} + p^rG \right) / \left( H + p^rG \right)$ can be uniquely expressed as \begin{equation*}
            \left( \sum_{N-r < k \leq N} p^{N-k} \overline{m}_k \cdot \overline{a}_k \right) / \left( H + p^rG \right),
        \end{equation*}
        where $\overline{m}_k \in \mathbb{Z} / p^{k-(N-r)} \mathbb{Z}$.
    \end{enumerate}
\end{lem}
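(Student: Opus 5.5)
Both parts rest on the filtrations of the quotients modulo $p^r$ by the subgroups $H^{[p^s]}+p^rG$. The plan is to argue by induction on $r$, using the multiplication-by-$p^{r-1}$ maps together with \autoref{rmk:ConsequenceOfConvexityConsequence} to pass between levels, and to count dimensions layer by layer.

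\textbf{Step 1: the map $x\mapsto p^{j}x$.} By \autoref{rmk:ConsequenceOfConvexityConsequence}, $x\in H^{[p^s]}+pG$ if and only if $p^jx\in H^{[p^{s+j}]}+p^{j+1}G$. Indeed, if $p^jx=y+p^{j+1}z$ with $y\in H^{[p^{s+j}]}$, then $y\in p^jG$ and $y/p^j$-type arguments apply. The cleaner route is to work in $G/H'$ for each convex $H'>H$ and use torsion-freeness of $G/H'$ there. We therefore get that $x\mapsto p^jx$ induces injective homomorphisms
\[
\bigl(H^{[p^{s}]}+pG\bigr)/\bigl(H^{[p^{s+1}]}+pG\bigr)\hookrightarrow \bigl(H^{[p^{s+j}]}+p^{j+1}G\bigr)/\bigl(H^{[p^{s+j+1}]}+p^{j+1}G\bigr).
\]
The target is filtered by $p^j$-multiples modulo $p^{j+1}$.

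\textbf{Step 2: spanning.} Next we show the exact sequence
\[
0\to \frac{H^{[p^{s}]}+pG}{H^{[p^{s+1}]}+pG}\xrightarrow{\;p^{r-1}\cdot\;}\frac{H^{[p^s]}+p^rG}{H^{[p^{s+1}]}+p^rG}\to \frac{H^{[p^{s}]}+p^{r-1}G}{H^{[p^{s+1}]}+p^{r-1}G}\to 0 .
\]
Here the first map is really induced by $x\mapsto p^{r-1}x$ on representatives living in $H^{[p^{s-r+1}]}$, composed with Step 1. The right-hand map is the natural reduction mod $p^{r-1}$.

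By induction on $r$, the right-hand quotient is uniquely expressed by $\sum_{s-r+1<k\le s}p^{s-k}\overline m_k\cdot\overline a_k$. Lifting these representatives gives elements of the middle group. The kernel of the reduction is the image of $p^{r-1}G\cap H^{[p^s]}$. Since $p^{r-1}y\in H^{[p^s]}$ forces $y\in H^{[p^{s-r+1}]}$ up to the appropriate error, this kernel is the image of the left term at level $s-r+1$, which is spanned by $p^{r-1}\overline a_{s-r+1}$. This gives the new summand $k=s-r+1$ with coefficient $p^{s-k}=p^{r-1}$.

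\textbf{Step 3: uniqueness, and part (2).} Uniqueness comes from injectivity in Step 1 together with the inductive uniqueness. If a combination vanishes, reducing mod $p^{r-1}$ kills all coefficients with $k>s-r+1$ by induction. The remaining term is $p^{r-1}\overline m\cdot\overline a_{s-r+1}$, and it vanishes only if $\overline m=0$, by the $\mathbb F_p$-independence of $\overline a_{s-r+1}$ and Step 1.

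Part (2) is handled the same way, with top quotient $(H^{[p^N]}+p^rG)/(H+p^rG)$. We filter it by $H^{[p^s]}+p^rG$ for $s\ge N$, which collapses because $\overline a_N$ spans modulo $H+pG$. Alternatively, we use the analogous exact sequence with $H^{[p^{s+1}]}$ replaced by $H$ and apply \autoref{lem:LinearlyIndependentElements}. This is where coefficients in $\mathbb Z/p^{k-(N-r)}\mathbb Z$ appear: $p^{N-k}\overline a_k$ survives modulo $p^r$ exactly up to multiplication by $p^{k-(N-r)}$, and $\overline a_N$ is $\mathbb Z/p^r$-independent by \autoref{lem:LinearlyIndependentElements}(1).

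\textbf{Main obstacle.} The delicate point is the kernel computation in Step 2, namely proving
\[
\bigl(H^{[p^s]}+p^rG\bigr)\cap p^{r-1}G\subseteq p^{r-1}\bigl(H^{[p^{s-r+1}]}\bigr)+H^{[p^{s+1}]}+p^rG .
\]
I would try to get this by working in $G/H'$ for each convex $H'>H$ and intersecting over $H'$. The inclusion must be made compatible with the intersection over $H'$ in the definition of $H^{[\cdot]}$, which may require a compactness or saturation argument, or exploiting that $H^{[m]}$ is an intersection over the definable family $\mathcal S_m$.
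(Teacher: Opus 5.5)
Your argument is correct in substance. It uses the same two ingredients as the paper: the defining basis properties of the $\overline{a}_k$, and division by powers of $p$ via \autoref{rmk:ConsequenceOfConvexityConsequence}. The difference is that you run the recursion from the other end of the $p$-adic filtration. The paper reduces modulo $p$ first: it reads off $\overline{m}_s$ from the basis property of $\overline{a}_s$, writes the remainder as $h+pa_1$ with $h\in H^{[p^{s+1}]}$, divides by $p$ to get $a_1\in H^{[p^{s-1}]}+p^{r-1}G$, and recurses at level $s-1$ modulo $p^{r-1}$; uniqueness and part (2) are handled the same way. You instead reduce modulo $p^{r-1}$ by induction and identify the kernel with $p^{r-1}$ times the level-$(s-r+1)$ layer. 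The paper's version is a direct element chase with no auxiliary exact sequences. Yours makes the graded structure explicit, and in part (2) it shows why the coefficient of $\overline{a}_k$ lives in $\mathbb{Z}/p^{k-(N-r)}\mathbb{Z}$: each inductive step adds one $p$-adic digit.

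The step you single out as the main obstacle is not one, and your plan for it is the wrong way round. Suppose $p^{r-1}g=h+p^{r}g'$ with $h\in H^{[p^s]}$. Then $h=p^{r-1}(g-pg')$, so \autoref{rmk:ConsequenceOfConvexityConsequence} gives $g-pg'\in H^{[p^{s-r+1}]}$ directly. In other words, $H^{[p^s]}\cap p^{r-1}G=p^{r-1}H^{[p^{s-r+1}]}$, which is just your Step 1 with $j=r-1$, and your Step 2 already uses it. No compactness or saturation is needed. The per-$H'$ reasoning works once the witness $g'$ has been fixed globally. If instead you pass to $G/H'$ first, the hypothesis degenerates to $p^{r-1}g\in H'+p^rG$ (since $s\ge r$), and intersecting over $H'$ yields only $g\in H^{[p]}$.

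Two smaller fixes:
\begin{itemize}
\item The left-hand term of your displayed exact sequence should be $(H^{[p^{s-r+1}]}+pG)/(H^{[p^{s-r+2}]}+pG)$, as your own text indicates.
\item In part (2), drop the \emph{collapsing} option: the chain $H^{[p^s]}+p^rG$, $s\ge N$, need not collapse. Use your second option with the correct kernel, namely $p^{r-1}$ times $(H^{[p^{N-r+1}]}+pG)/(H+pG)$. Its basis is the union of $\overline{a}_{N-r+1},\dots,\overline{a}_N$. \autoref{lem:LinearlyIndependentElements}(1) by itself controls only $\overline{a}_N$.
\end{itemize}
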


\begin{proof}
    (1) (Existence) Let $a \in H^{[p^s]} + p^rG$.
    By the definition of $\overline{a}_s$, there exist $\overline{m}_s \in \mathbb{F}_p$ such that $a - \overline{m}_s \cdot \overline{a}_s \in H^{[p^{s+1}]} + pG$.
    Choose $a_1 \in G$ such that $a - \overline{m}_s \cdot \overline{a}_s \in H^{[p^{s+1}]} + pa_1$.
    It follows that $pa_1 \in H^{[p^s]} + p^rG$, and hence $a_1 \in H^{[p^{s-1}]} + p^{r-1}G$ by \autoref{rmk:ConsequenceOfConvexityConsequence}.
    Since $a_1 -  \overline{m}_{s-1} \cdot \overline{a}_{s-1} \in H^{[p^s]} + pG $ for some $\overline{m}_{s-1} \in \mathbb{F}_p$, we have $pa_1 - p \overline{m}_{s-1} \cdot \overline{a}_{s-1} \in H^{[p^{s+1}]} + p^2G$, and thus $a - \overline{m}_s \cdot \overline{a}_s - p \overline{m}_{s-1} \cdot \overline{a}_{s-1} \in H^{[p^{s+1}]} + p^2G$.
    Repeating this process yields the desired expression.

    (Uniqueness) Suppose that $\sum_{s-r < k \leq s} p^{s-k} \overline{m}_k \cdot \overline{a}_k \in H^{[p^{s+1}]} + p^rG$.
    Then $\overline{m}_s \cdot \overline{a}_s \in H^{[p^{s+1}]} + pG$, which implies $\overline{m}_s = 0$.
    It follows that $\sum_{s-r < k \leq s-1} p^{s-k} \overline{m}_k \cdot \overline{a}_k \in H^{[p^{s+1}]} + p^rG$, and hence $\sum_{s-r < k \leq s-1} p^{s-1-k} \overline{m}_k \cdot \overline{a}_k \in H^{[p^s]} + p^{r-1}G$.
    The same argument shows that $\overline{m}_{s-1} = 0$, and repeating this reasoning establishes the claim.

    (2) (Existence) Let $a \in H^{[p^N]} + p^rG$. As in (1), we have $a - \overline{m}'_N \cdot \overline{a}_N \in H + pa_1$ for some $0 \leq \overline{m}'_N < p$ and $a_1 \in G$.
    Since $a_1 \in H^{[p^{N-1}]} + p^{r-1}G$, we have $a_1 - \overline{m}'_{N-1} \cdot \overline{a}_{N-1} \in H^{[p^N]} + pG$, and hence $a_1 - \overline{m}'_{N-1} \cdot \overline{a}_{N-1} - \overline{m}''_N \cdot \overline{a}_N \in H + pG$ for some $0 \leq \overline{m}'_{N-1}, \overline{m}''_N < p$.
    Thus, $a - (\overline{m}'_N + p\overline{m}''_N) \cdot \overline{a}_N - p \overline{m}'_{N-1} \cdot \overline{a}_{N-1} \in H + p^2G$.
    Repeat this.

    (Uniqueness) Suppose that $\sum_{N-r < k \leq N} p^{N-k} \overline{m}_k \cdot \overline{a}_k \in H + p^rG$.
    This implies $\overline{m}_N \cdot \overline{a}_N \in H + pG$, which yields $p \mid \overline{m}_N$.
    Then $(\overline{m}_N / p) \cdot \overline{a}_N + \sum_{N-r < k \leq N-1} p^{N-1-k} \overline{m}_k \cdot \overline{a}_k \in H + p^{r-1}G$.
    In particular, $\overline{m}_{N-1} \cdot \overline{a}_{N-1}  \in H^{[p^N]} + pG$.
    Thus $p \mid \overline{m}_{N-1}$, which implies $(\overline{m}_N / p) \cdot \overline{a}_N \in H + pG$, and hence $p \mid (\overline{m}_N / p)$.
    Therefore, $(\overline{m}_N / p^2) \cdot \overline{a}_N + (\overline{m}_{N-1} / p) \cdot \overline{a}_{N-1} + \sum_{N-r < k \leq N-2} p^{N-2-k} \overline{m}_k \cdot \overline{a}_k \in H + p^{r-2}G$.
    Repeat this.
\end{proof}

\begin{cor}\label{cor:LinearRepresentation}
    For each $1 \leq r \leq N$, every element in $R_{H}^{\mathfrak{s}_{p^r}}$ can be uniquely expressed as
    \begin{equation*}
        \left( \sum_{r \leq k \leq N} \sum_{0 \leq s < r} p^s \overline{m}_{k,s} \cdot \overline{a}_k + \sum_{1 \leq k < r} \sum_{r - k \leq s < r} p^s \overline{m}_{k,s} \cdot \overline{a}_k \right) / \left( H + p^rG \right),
    \end{equation*}
    where $0 \leq \overline{m}_{k,s} < p$.
\end{cor}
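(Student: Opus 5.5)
The plan is to filter $R_H^{\mathfrak{s}_{p^r}} = H^{[p^r]}/(H+p^rG)$ (see \autoref{lem:CoverAndValue}) by the chain
\begin{equation*}
    H + p^rG \subseteq H^{[p^N]} + p^rG \subseteq H^{[p^{N-1}]} + p^rG \subseteq \dots \subseteq H^{[p^r]} + p^rG = H^{[p^r]},
\end{equation*}
and to build the representation one step of the filtration at a time from the preceding lemma. The last equality holds because $p^rG \subseteq H^{[p^r]}$. Part (2) of the lemma covers the bottom piece $(H^{[p^N]}+p^rG)/(H+p^rG)$. There every element is uniquely $\sum_{N-r<k\leq N} p^{N-k}\overline{m}_k\cdot\overline{a}_k$ with $\overline{m}_k \in \mathbb{Z}/p^{k-(N-r)}\mathbb{Z}$. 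Part (1) covers each successive quotient $(H^{[p^s]}+p^rG)/(H^{[p^{s+1}]}+p^rG)$ for $r\leq s<N$. There every element is uniquely $\sum_{s-r<k\leq s}p^{s-k}\overline{m}_k\cdot\overline{a}_k$ with $\overline{m}_k\in\mathbb{F}_p$.

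Standard filtration bookkeeping gives existence. Starting from $a\in H^{[p^r]}$, I subtract the part-(1) representative for $s=r$ to land in $H^{[p^{r+1}]}+p^rG$, then the one for $s=r+1$, and so on up to $s=N-1$. I finish with the part-(2) representative. Uniqueness goes the same way in reverse: if two such sums agree modulo $H+p^rG$, I read off the top layer $s=r$ using uniqueness in (1), subtract, and continue downward, ending with uniqueness in (2). Part (2) is legitimately applied only to elements of $H^{[p^N]}+p^rG$, which is what remains after all the layers are removed.

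The remaining step is re-indexing the coefficients into the pairs $(k,s)$ of the corollary. Layer $s$ (for $r\leq s<N$) contributes $\overline{a}_k$ with coefficient $p^{s-k}\overline{m}$, $0\leq\overline{m}<p$, for $s-r<k\leq s$. For fixed $k$ the exponent $e=s-k$ runs over all $e$ with $0\leq e<r$, $k+e\geq r$ and $k+e<N$. From the bottom piece, writing $\overline{m}_k\in\mathbb{Z}/p^{k-(N-r)}\mathbb{Z}$ in base $p$ turns $p^{N-k}\overline{m}_k\cdot\overline{a}_k$ into the terms $p^{e}\overline{m}_{k,e}\cdot\overline{a}_k$ with $N-k\leq e<r$. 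Both ranges also include $\overline{a}_N$, via the bottom piece with $0\leq e<r$.

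Combining, each $k$ gets exactly the exponents $e$ with $\max(0,r-k)\leq e<r$, and there are no overlaps because the two ranges are split at $e=N-k$. For $k\geq r$ this is $0\leq s<r$; for $k<r$ it is $r-k\leq s<r$. This matches the stated formula with $s$ renamed as the exponent. The digits are unique because base-$p$ expansions of residues mod $p^{k-(N-r)}$ are unique.

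I expect the main obstacle to be this re-indexing: making sure the exponent ranges from the different layers neither overlap nor leave gaps. The edge cases $k=N$ and $k<r$, where only some layers contribute, need the most care.
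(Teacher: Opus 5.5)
Your proposal is correct and is essentially the intended derivation: the paper gives no separate proof and treats the corollary as immediate from the preceding lemma, i.e.\ stacking part (1) over the layers $r \leq s < N$ and part (2) on $(H^{[p^N]}+p^rG)/(H+p^rG)$, and your re-indexing (each $k$ gets exactly the exponents $\max(0,r-k) \leq e < r$, split at $e = N-k$) checks out, including the edge cases $r = N$ and $k < r$. Two harmless points are left implicit: for uniqueness you use that the layer-$s$ sum lies in $H^{[p^s]}$, which follows from $p^{s-k}\overline{a}_k \in H^{[p^s]}$ by \autoref{rmk:ConsequenceOfConvexityConsequence}; and $\overline{a}_N$ comes only from the bottom piece, not from both ranges as one of your sentences says.
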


For each $1 \leq k \leq N$, we arrange $\overline{a}_k$ in a linear order as $\overline{a}_k = (a_{k,i})_{i \in I_k}$ such that each $I_k$ has a maximum element (if nonempty), and if $G / H$ is discrete, then $a_{N,\min I_N} / H = 1_H$.
We introduce a linear order on $\mathbb{N} \times (I_N \cup \dots \cup I_1)$, illustrated in \autoref{fig:NewOrder}, as follows:
Let $(s,i) \in \mathbb{N} \times I_k$ and $(t,j) \in \mathbb{N} \times I_l$.
We define $(s,i) < (t,j)$ if $s + k > t + l$, or if $s + k = t + l$ and $k > l$, or if $s = t$, $k = l$, and $i < j$.
Based on this ordering, we introduce a valuation $\sigma_{p^r}: H^{[p^r]} \to \left( \mathbb{N}_{<r} \times (I_N \cup \dots \cup I_1) \right) \cup \left\{ -\infty \right\}$ for each $1 \leq r \leq N$.

\begin{dfn}\label{dfn:RefinedValuation}
    Let $a \in H^{[p^r]} \setminus ( H + p^rG )$.
    Express $a / ( H + p^rG )$ as in \autoref{cor:LinearRepresentation} and let $\overline{m}_{k,s} = (m_{k,s,i})_{i \in I_k}$.
    We define $\sigma_{p^r}(a)$ to be the maximum $(s,i) \in \mathbb{N}_{<r} \times I_k$ such that $m_{k,s,i} \neq 0$, and write $\sigma_{p^r}(a) = (\sigma_{p^r}^{\mathbb{N}}(a),\sigma_{p^r}^{\Sigma}(a))$.
    If $a \in H + p^rG$, we set $\sigma_{p^r}(a) = -\infty$, which is less than $\mathbb{N}_{<r} \times (I_N \cup \dots \cup I_1)$.
\end{dfn}

\begin{figure}\caption{Order on $\mathbb{N} \times (I_N \cup \dots \cup I_1)$}\label{fig:NewOrder}
    \centering
    \begin{tikzpicture}
        \foreach \x in {1,2,3,4,5,6} \foreach \y in {0,1,2} \node at ($-0.7*(\x,0)-0.6*(0,\y)$) {$I_{\x}$};
        \foreach \x in {1,2,3,4,5,6} \node at ($-0.7*(\x,0)+(0,-1.8)$) {$\vdots$};
        \foreach \y in {0,1,2} \node at ($(-4.9,0)-0.6*(0,\y)$) {$\cdots$};
        \node [anchor=east] at (-6,0) {$r = 0$};
        \node [anchor=east] at (-6,-0.6) {$1$};
        \node [anchor=east] at (-6,-1.2) {$2$};
        \node [anchor=east] at (-6,-1.8) {$\vdots$};
        \begin{scope}[semithick]
            \draw [dotted] (-4.6,-0.8)--(-4.4,-1.0); \draw (-4.4,-1.0)--(-4.0,-1.4); \draw [dotted] (-4.0,-1.4)--(-3.8,-1.6);
            \draw [dotted] (-3.8,-1.6)--(-4.6,-0.2);
            \draw [dotted] (-4.6,-0.2)--(-4.4,-0.4); \draw [->] (-4.4,-0.4)--(-3.85,-0.9); \draw (-3.85,-0.9)--(-3.3,-1.4); \draw [dotted] (-3.3,-1.4)--(-3.1,-1.6);
            \draw [dotted] (-3.1,-1.6)--(-4.4,0.2);
            \draw [->] (-4.4,0.2)--(-3.85,-0.3); \draw (-3.85,-0.3)--(-2.6,-1.4); \draw [dotted] (-2.6,-1.4)--(-2.4,-1.6);
            \draw [dotted] (-2.4,-1.6)--(-3.7,0.2);
            \draw [->] (-3.7,0.2)--(-3.15,-0.3); \draw (-3.15,-0.3)--(-1.9,-1.4); \draw [dotted] (-1.9,-1.4)--(-1.7,-1.6);
            \draw [dotted] (-1.7,-1.6)--(-3.0,0.2);
            \draw [->] (-3.0,0.2)--(-2.45,-0.3); \draw (-2.45,-0.3)--(-1.2,-1.4); \draw [dotted] (-1.2,-1.4)--(-1.0,-1.6);
            \draw [dotted] (-1.0,-1.6)--(-2.3,0.2);
            \draw [->] (-2.3,0.2)--(-1.75,-0.3); \draw (-1.75,-0.3)--(-0.5,-1.4);
            \draw [dotted] (-0.5,-1.4)--(-1.6,0.2);
            \draw [->] (-1.6,0.2)--(-1.05,-0.3); \draw (-1.05,-0.3)--(-0.5,-0.8);
            \draw [dotted] (-0.5,-0.8)--(-0.9,0.2);
            \draw (-0.9,0.2)--(-0.5,-0.2);
        \end{scope}
    \end{tikzpicture}
\end{figure}

We have $\sigma_{p^r}(-a) = \sigma_{p^r}(a)$ and $\sigma_{p^r}(a + b) \leq \max\left\{ \sigma_{p^r}(a),\sigma_{p^r}(b) \right\}$ for all $a,b \in H^{[p^r]}$.
Hence, $\sigma_{p^r}$ is indeed a valuation.
Furthermore, defining ribs as in \autoref{dfn:Ribs}, every rib with respect to $\sigma_{p^r}$ is of size $p$.
We show several properties of these valuations that will be used in the quantifier elimination argument below.

\begin{lem}
    Let $1 \leq r \leq N$.
    We define $G_{\leq (s,i)}^{\sigma_{p^r}} = \left\{ a \in H^{[p^r]} \mid \sigma_{p^r}(a) \leq (s,i) \right\}$ for this lemma, and define $G_{< (s,i)}^{\sigma_{p^r}}$ analogously.
    Then for every $0 \leq s < r$, $1 \leq k \leq N$, $i \in I_k$, and $\square \in \left\{ \leq,< \right\}$, the following hold:
    \begin{itemize}
        \item $(s,i) \in \sigma_{p^r}(H^{[p^r]})$ if and only if $s \geq r - k$.
        \item $G_{\square (r-1,i)}^{\sigma_{p^r}} \cap p^{r-1}G = p^{r-1}G_{\square (0,i)}^{\sigma_p}$.
        \item If $s < r - 1$, choose the maximal $k \leq l < k + (r - 1) - s$ such that $I_l \neq \emptyset$, and let $j = \max I_l$.
        Then $G_{\square (s,i)}^{\sigma_{p^r}} \cap p^{r-1}G = p^{r-1}G_{\leq (0,j)}^{\sigma_p}$.
        \item If $(s,i) \in \sigma_{p^r}(H^{[p^r]})$ and $s < r-1$, then $G_{\square (s,i)}^{\sigma_{p^r}} + p^{r-1}G = G_{\square (s,i)}^{\sigma_{p^{r-1}}}$.
        \item If $I_N \cup \dots \cup I_{k+1} = \emptyset$, then $G_{\square (r-1,i)}^{\sigma_{p^r}} + p^{r-1}G = G_{< H}^{\mathfrak{s}_{p^{r-1}}}$.
        If this set is nonempty, then let $j = \max(I_N + \dots + I_{k+1})$.
        Let $j \in I_l$, and let $t = \max \left\{ (r - 1) - (l - k),0 \right\}$.
        Then $G_{\square (r-1,i)}^{\sigma_{p^r}} + p^{r-1}G = G_{\leq (t,j)}^{\sigma_{p^{r-1}}}$.
    \end{itemize}
\end{lem}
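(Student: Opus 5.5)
The plan is to translate everything into the coordinates of \autoref{cor:LinearRepresentation} and then read off each item from the definition of the order on $\mathbb{N} \times (I_N \cup \dots \cup I_1)$. For $a \in H^{[p^r]}$, write the digit array of $a/(H+p^rG)$ as $(m_{k,s,i})$, with $0 \leq m_{k,s,i} < p$. The corollary gives a free position $(s,i)$, $i \in I_k$, exactly when $s < r$ and either $k \geq r$ or $s \geq r-k$. By \autoref{dfn:RefinedValuation}, $\sigma_{p^r}(a)$ is the largest occupied position. Consequently $G^{\sigma_{p^r}}_{\square(s,i)}$ is the set of $a$ all of whose nonzero digits lie at positions $\square\,(s,i)$. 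This makes it a union of $H + p^rG$-cosets described purely combinatorially.

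\textbf{First item.} I read it off directly: $(s,i)$ is realized, for instance by $p^s a_{k,i}$, iff it is an allowed position, i.e.\ $s \geq r-k$.

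\textbf{Two auxiliary observations.} I would prove these first.
\begin{enumerate}
\item[(a)] Multiplication by $p^{r-1}$ induces an isomorphism $H^{[p]}/(H+pG) \to (H^{[p^r]} \cap (H + p^{r-1}G))/(H+p^rG)$, using \autoref{rmk:ConsequenceOfConvexityConsequence} together with $H \subseteq H+p^rG$. If $x$ has $\sigma_p$-digits $(m_{k,0,i})$, then $p^{r-1}x$ has exactly the digits $m_{k,r-1,i}$ at level $r$ and zeros elsewhere. This follows from uniqueness in \autoref{cor:LinearRepresentation}, since $p^{r-1}\sum_k \overline{m}_{k,0}\cdot\overline{a}_k$ is already of the required form.
\item[(b)] If $a \in H^{[p^r]}$ has level-$r$ digits $(m_{k,s,i})_{s<r}$, then $a \in H^{[p^{r-1}]}$ and its level-$(r-1)$ digits are $(m_{k,s,i})_{s<r-1}$, padded with digits at the positions that are newly allowed at level $r-1$. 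The padding comes from the terms with $k < r$, $s = r-1$, which may become expressible at lower $s$ only after absorbing multiples of $p^{r-1}$. To justify this I reduce the level-$r$ sum modulo $p^{r-1}G$ and apply the uniqueness part of the lemma preceding \autoref{cor:LinearRepresentation} at level $r-1$.
\end{enumerate}

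\textbf{Second and third items.} With (a), $G^{\sigma_{p^r}}_{\square(s,i)} \cap p^{r-1}G$ consists of the $p^{r-1}x$ whose occupied positions $(r-1,j)$, $j \in I_l$, satisfy $(r-1,j)\,\square\,(s,i)$. For $s = r-1$ this is literally the condition $(0,j)\,\square\,(0,i)$ in the order for $\sigma_p$, which gives the second item. For $s < r-1$, comparing $(r-1)+l$ with $s+k$ in the order shows the admissible $j$ form an initial segment of the $\sigma_p$-order. Its maximum is $\max I_l$ for the extremal nonempty $I_l$ in the range $k \leq l < k+(r-1)-s$, which gives the third item. Here I only need to be careful about boundary ties where $s+k = (r-1)+l$.

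\textbf{Fourth and fifth items.} These concern $G^{\sigma_{p^r}}_{\square(s,i)} + p^{r-1}G$, and I would use (b). Adding $p^{r-1}G$ kills exactly the digits at $s = r-1$ (by (a)) and leaves the others unchanged. So for $s < r-1$ with $(s,i)$ realized, the set is the set of elements whose level-$(r-1)$ digits lie $\square\,(s,i)$, which is the fourth item. For the fifth item the cutoff $(r-1,i)$ disappears after reduction. What remains is the largest surviving level-$(r-1)$ position below it. If no $I_l$ with $l > k$ is nonempty, nothing survives and the set is $H + p^{r-1}G = G^{\mathfrak{s}_{p^{r-1}}}_{<H}$ by \autoref{lem:CoverAndValue}. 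Otherwise the surviving maximum is $(t,j)$ with $j = \max(I_N+\dots+I_{k+1})$ and $t$ the least allowed $s$ for $I_l$, namely $\max\{(r-1)-(l-k),0\}$, after checking it against the order.

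\textbf{Main obstacle.} I expect the hardest step to be (b), specifically the claim that truncating digits commutes with passing from level $r$ to level $r-1$. The digits are not additive: a carry out of the top digit of an $\overline{a}_k$ with $k < r$ can reappear as a digit of some $\overline{a}_{k'}$. I would handle this by working with the explicit sums rather than the digit maps, and by invoking uniqueness at level $r-1$ only after reducing modulo $p^{r-1}G$.
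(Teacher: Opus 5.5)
Your strategy (expand in the normal form of Corollary~\ref{cor:LinearRepresentation}, write elements of $p^{r-1}G$ as $p^{r-1}b$ for the intersections, reduce modulo $p^{r-1}G$ for the sums) is exactly the paper's proof. However, the step where you say the comparison ``gives the third item'' fails.

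For $i\in I_k$, $s<r-1$ and $j'\in I_{l'}$, the order gives $(r-1,j')<(s,i)$ iff $(r-1)+l'>s+k$, i.e.\ iff $l'>k-(r-1-s)$. A tie $(r-1)+l'=s+k$ forces $l'<k$ and places $(r-1,j')$ \emph{above} $(s,i)$. So the admissible blocks are all $I_{l'}$ with $l'>k-(r-1-s)$, which extend \emph{below} $k$. The correct $j$ is $\max I_l$ for the \emph{minimal} nonempty $I_l$ with $k-(r-1-s)<l\le k$. This is what axiom (12) of Definition~\ref{dfn:RefinedValuationAxioms} says (take the maximal $0\le z<r-1-s$ with $i\simeq_z j$), but it is not what the third bullet says.

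In fact the bullet as printed is false. Suppose some $I_l$ with $k<l<k+(r-1)-s$ is nonempty, so the printed recipe picks $j\in I_l$. Then $p^{r-1}a_{k,i}$ lies in the left-hand side, since its $\sigma_{p^r}$-value $(r-1,i)$ is on the deeper diagonal $r-1+k>s+k$. But $a_{k,i}\notin G^{\sigma_p}_{\leq(0,j)}$, because $(0,i)>(0,j)$. Likewise, any nonempty $I_{l'}$ with $k-(r-1-s)<l'<k$ is lost.

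Your remark about ties at $s+k=(r-1)+l$ is a symptom of this: no such ties occur when $l\ge k$. The paper's ``This yields the second and third statements'' does not check these indices either. What the computation actually proves, and what you should state and prove, is the corrected form recorded as axiom (12), which is the version used later in the quantifier elimination.

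Your observation (b) also misdescribes the reduction modulo $p^{r-1}G$. The terms with $s=r-1$ are multiples of $p^{r-1}$ and simply vanish modulo $H+p^{r-1}G$; nothing is carried or ``absorbed''. Every position with $s<r-1$ that is allowed at level $r$ is also allowed at level $r-1$. So the truncated sum is already in normal form at level $r-1$, and uniqueness gives:
\begin{itemize}
\item the $\sigma_{p^{r-1}}$-digits of $a\in H^{[p^r]}$ are its level-$r$ digits with $s<r-1$;
\item the digits at the newly allowed positions (those with $s+k=r-1$) are \emph{zero}.
\end{itemize}
This is exactly the paper's displayed formula.

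The zeros are essential. A nonzero digit on the diagonal $r-1<s+k$ would lie above $(s,i)$ and break the inclusion $G^{\sigma_{p^r}}_{\square(s,i)}+p^{r-1}G\subseteq G^{\sigma_{p^{r-1}}}_{\square(s,i)}$ in the fourth item. Your later phrase ``leaves the others unchanged'' tacitly assumes them. Once (b) is replaced by this one-line observation, your treatment of the first, second, fourth and fifth items goes through, and the anticipated ``main obstacle'' disappears.
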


\begin{proof}
    Let $a \in H^{[p^r]} \cap p^{r-1}G$, and write $a = p^{r-1}b$.
    Expressing $b / (H + pG)$ as in \autoref{cor:LinearRepresentation}, we see that $a / (H + p^rG)$ is of the form
    \begin{equation*}
        \left( \sum_{1 \leq k \leq N} p^{r-1} \overline{m}_k \cdot \overline{a}_k \right) / \left( H + p^rG \right).
    \end{equation*}
    This yields the second and third statements.
    For the fourth and fifth statements, let $a \in H^{[p^r]}$ and observe that $a / (H + p^{r-1}G)$ can be written as
    \begin{equation*}
        \left( \sum_{r \leq k \leq N} \sum_{0 \leq s < r-1} p^s \overline{m}_{k,s} \cdot \overline{a}_k + \sum_{2 \leq k < r} \sum_{r - k \leq s < r-1} p^s \overline{m}_{k,s} \cdot \overline{a}_k \right) / \left( H + p^{r-1}G \right). \qedhere
    \end{equation*}
\end{proof}

We now axiomatize these valuations.
For each $z \in \mathbb{Z}$, we define a binary relation $\simeq_z$ on $I_N + \dots + I_1$ by setting $i \simeq_z j$ if $l + z = k$ for $i \in I_k$ and $j \in I_l$.
The structure $(I_N + \dots + I_1;<,(\simeq_z)_{z\in\mathbb{Z}})$ satisfies the following conditions:

\begin{dfn}
    Let $(\Sigma; <)$ be a nonempty linearly ordered set and let $(\simeq_z)_{z \in \mathbb{Z}}$ be a family of binary relations on $\Sigma$.
    We call $(\Sigma; <, (\simeq_z)_{z \in \mathbb{Z}})$ a \emph{good value} if the following conditions holds for all $i, j, k \in \Sigma$ and $z, w \in \mathbb{Z}$:
    \begin{itemize}
        \item $i \simeq_0 i$; $i \simeq_z j \implies j \simeq_{-z} i$; and $i \simeq_z j \land j \simeq_w k \implies i \simeq_{z+w} k$ (in particular, $\simeq_0$ is an equivalence relation).
        \item If $z > 0$, then $i \simeq_z j \implies i < j$.
        \item If $z \geq 0$, then $i \leq k \leq j \land i \simeq_z j \implies \bigvee_{t=0}^z (i \simeq_t k)$ (in particular, each $\simeq_0$-class is convex).
        \item $\Sigma / {\simeq_0}$ is a discrete linear order with endpoints, and each $\simeq_0$-class has a maximum element.
    \end{itemize}
\end{dfn}

Given a good value $\Sigma$, we define a linear order on $\mathbb{N} \times \Sigma$ in a manner analogous to \autoref{fig:NewOrder}:
For $(s,i),(t,j) \in \mathbb{N} \times \Sigma$, if there exists no $z \in \mathbb{Z}$ such that $i \simeq_z j$, we define $(s,i) < (t,j)$ if $i < j$; otherwise (in which case such $z$ is unique), we define $(s,i) < (t,j)$ if $z > t - s$, or if $z = t - s$ and $s < t$, or if $z = t - s$, $s = t$, and $i < j$.
For each $r \geq 1$, the restriction of this order to $\mathbb{N}_{< r} \times \Sigma$ is definable in the language $(<, (\simeq_z)_{z \in \mathbb{Z}})$:
Indeed, if for $(s,i),(t,j) \in \mathbb{N}_{<r} \times \Sigma$ there exists no $-r < z < r$ such that $i \simeq_z j$, then $(s,i) < (t,j)$ if and only if $i < j$.

For each $1 \leq r \leq N$ and $H \in \mathcal{S}_p \setminus \left\{ \emptyset \right\}$, we define a valuation $\sigma_{p^r}^H$ on $H^{[p^r]}$ as in \autoref{dfn:RefinedValuation}.
We then define a valuation $\sigma_{p^r}$ on $G$ as follows:
For $a \in G \setminus p^rG$, let $H = \mathfrak{s}_{p^r}(a)$ and set $\sigma_{p^r}(a) = \sigma_{p^r}^H(a)$, while for $a \in p^rG$, we set $\sigma_{p^r}(a) = -\infty$.
We order these values so that $\mathfrak{s}_{p^r}(a) < \mathfrak{s}_{p^r}(b)$ implies $\sigma_{p^r}(a) < \sigma_{p^r}(b)$.
Furthermore, for each $k \geq 1$, we introduce a unary predicate $M_p^k$ on $\mathcal{S}_p$ such that $M_p^k(H)$ if $k$ is the minimal integer satisfying $H^{[p^{k+1}]} + pG \subsetneq H^{[p^k]} + pG$.
The following properties can be axiomatized by a set of sentences (in the language introduced in \autoref{dfn:LsynPlus} below), and any finite subset of these sentences is satisfied in $G$ by choosing a sufficiently large $N$.

\begin{dfn}\label{dfn:RefinedValuationAxioms}
    Let $(\Sigma_p; <)$ be a nonempty linearly ordered set equipped with binary relations $(\simeq_z)_{z \in \mathbb{Z}}$, and let  $(\sigma_{p^r}:G \to (\mathbb{N}_{<r} \times \Sigma_p) \cup \left\{ -\infty \right\})_{r \geq 1}$ be a family of functions.
    We say that $(\sigma_{p^r})_{r \geq 1}$ are \emph{refined valuations} if the following conditions holds:
    \begin{enumerate}
        \item $\sigma_p^{\Sigma}: G \setminus pG \to \Sigma_p$ is surjective.
        \item $\mathfrak{s}_p(a) < \mathfrak{s}_p(b) \implies \sigma_p^{\Sigma}(a) < \sigma_p^{\Sigma}(b)$ for all $a,b \in G$.
    \end{enumerate}
    For each $H \in \mathcal{S}_p \setminus \left\{ \emptyset \right\}$, let $\Sigma_{p,H} = \sigma_p^{\Sigma}\left( H^{[p]} \setminus (H + pG) \right)$ (note that each $\Sigma_{p,H}$ is a convex subset).
    For every $H, H_1, H_2 \in \mathcal{S}_p \setminus \left\{ \emptyset \right\}$:
    \begin{enumerate}[resume]
        \item If $H_1 \neq H_2$, $i \in \Sigma_{p,H_1}$, and $j \in \Sigma_{p,H_2}$, then $i \not\simeq_z j$ for any $z \in \mathbb{Z}$.
        \item The structure $(\Sigma_{p,H}; <, (\simeq_z)_{z \in \mathbb{Z}})$ is a good value.
    \end{enumerate}
    We equip $\mathbb{N} \times \Sigma_p$ with a linear order by taking the order defined for good values on each $\mathbb{N} \times \Sigma_{p,H}$, and setting $\mathbb{N} \times \Sigma_{p,H_1} < \mathbb{N} \times \Sigma_{p,H_2}$ whenever $H_1 < H_2$.
    For every $r \geq 1$, $H \in \mathcal{S}_p \setminus \left\{ \emptyset \right\}$, and $a\in G$:
    \begin{enumerate}[resume]
        \item $\sigma_{p^r}$ is a valuation such that $\sigma_{p^r}(a) = -\infty \iff a \in p^rG$.
        \item $\sigma_{p^r}^{\Sigma}\left( H^{[{p^r}]} \setminus (H + p^rG) \right) \subseteq \Sigma_{p,H}$.
        \item Every rib with respect to $\sigma_{p^r}$ has size $p$.
        \item $\sigma_{p^{r+1}}(pa) = (\sigma_{p^r}^{\mathbb{N}}(a)+1,\sigma_{p^r}^{\Sigma}(a))$.
        \item If $G / H$ is discrete, then $\sigma_{p^r}(1_H) = (0,\min \Sigma_{p,H})$ (note that $\sigma_{p^r}(1_H)$ is well-defined).
    \end{enumerate}
    Furthermore, for every $0 \leq s < r$, $i \in \Sigma_{p,H}$, and $\square \in \left\{ \leq,< \right\}$:
    \begin{enumerate}[resume]
        \item Let $i_0 = \max \Sigma_{p,H}$. If $M_p^k(H)$ for some $1 \leq k < r$ and $i \simeq_z i_0$ for some $0 \leq z < r - k$, then $(s,i) \in \sigma_{p^r}(H^{[p^r]})$ if and only if $s \geq r - (k + z)$. Otherwise, $(s,i) \in \sigma_{p^r}(H^{[p^r]})$ always holds.
        \item $G_{\square (r-1,i)}^{\sigma_{p^r}} \cap p^{r-1}G = p^{r-1}G_{\square (0,i)}^{\sigma_p}$.
        \item If $s < r - 1$, choose the maximal $0 \leq k < r-1-s$ such that there exists $j$ with $i \simeq_k j$, and choose the maximal $j$.
        Then $G_{\square (s,i)}^{\sigma_{p^r}} \cap p^{r-1}G = p^{r-1}G_{\leq (0,j)}^{\sigma_p}$.
        \item If $(s,i) \in \sigma_{p^r}(H^{[p^r]})$ and $s < r-1$, then $G_{\square (s,i)}^{\sigma_{p^r}} + p^{r-1}G = G_{\square (s,i)}^{\sigma_{p^{r-1}}}$.
        \item If $\left\{ j \in \Sigma_{p,H} \mid j < i\text{ and } j \not\simeq_0 i \right\} = \emptyset$, then $G_{\square (r-1,i)}^{\sigma_{p^r}} + p^{r-1}G = G_{< H}^{\mathfrak{s}_{p^{r-1}}}$.
        If this set is nonempty, then let $j$ be its maximum element.
        If $j \simeq_k i$ for some $1 \leq k \leq r-1$, set $t = r - 1 - k$; otherwise, set $t = 0$.
        Then $G_{\square (r-1,i)}^{\sigma_{p^r}} + p^{r-1}G = G_{\leq (t,j)}^{\sigma_{p^{r-1}}}$.
    \end{enumerate}
\end{dfn}

By compactness, there exists an elementary extension (in $L_{\mathrm{oag}}$) $\widetilde{G}$ of $G$ equipped with refined valuations $(\sigma_{p^r})_{p \in \mathbb{P}, r \geq 1}$.
Therefore, to prove \autoref{thm:DistalExpansion}, it suffices to show that every ordered abelian group with refined valuations is distal.
We define $\pi_{p}:\Sigma_p \to \mathcal{S}_p$ by $\pi_p(\sigma_{p^r}^{\Sigma}(a)) = \mathfrak{s}_{p^r}(a)$ (which is well-defined).
Furthermore, for each $1 \leq k < p^r$, we define $\sigma_{p^r,k}$ as follows:
$\sigma_{p^r,k}(a) = -\infty$ for $a \in p^rG$.
If $a \notin p^rG$, let $H = \mathfrak{s}_{p^r}(a)$.
When $G / H$ is dense, we set $\sigma_{p^r,k}(a) = \sigma_{p^r}(a)$.
When $G / H$ is discrete, we set $\sigma_{p^r,k}(a) = -\infty$ if $a / H \equiv_{p^r} k_H$, and $\sigma_{p^r,k}(a) = \sigma_{p^r}(a - k_H)$ otherwise (which is well-defined and is in $\mathbb{N}_{<r} \times \Sigma_{p,H}$).

\begin{dfn}\label{dfn:LsynPlus}
    We define a language $L_{\mathrm{syn}+}$ by augmenting $L_{\mathrm{syn}}$ with a new sort $\Sigma_{p}$ for each $p \in \mathbb{P}$ and the following symbols:
    \begin{enumerate}
        \item Binary predicates $<$ and $(\simeq_z)_{z\in\mathbb{Z}}$ on $\Sigma_p$ for each $p \in \mathbb{P}$.
        \item A unary predicate $M_p^k$ on $\mathcal{S}_p$ for each $p \in \mathbb{P}$ and $k \geq 1$.
        \item A function $\pi_{p}:\Sigma_p \to \mathcal{S}_p$ for each $p \in \mathbb{P}$.
        \item A function $\sigma_{p^r}:G \to (\mathbb{N}_{<r} \times \Sigma_p) \cup \left\{ -\infty \right\}$ for each $p \in \mathbb{P}$ and $r \geq 1$.\footnote{More rigorously, we introduce functions $\sigma_{p^r}^{\Sigma}:G \to \Sigma_p \cup \left\{ -\infty \right\}$ and unary relations $V_{p^r}^s$ ($0 \leq s < r$), and define $\sigma_{p^r}(a) = (s,i)$ if $a \in V_{p^r}^s$ and $\sigma_{p^r}^{\Sigma}(a) = i$.}
        \item A function $\sigma_{p^r,k}:G \to (\mathbb{N}_{<r} \times \Sigma_p) \cup \left\{ -\infty \right\}$ for each $p \in \mathbb{P}$, $r \geq 1$, and $1 \leq k < p^r$.
    \end{enumerate}
\end{dfn}

We regard an ordered abelian group with refined valuations as an $L_{\mathrm{syn}+}$-structure.
We denote $\mathcal{A} \cup \left\{ \Sigma_p \mid p \in \mathbb{P} \right\}$ by $\mathcal{A}^+$.

\begin{rmk}
    The symbols in \autoref{dfn:Lsyn} (5), (8), (9), and $\mathfrak{s}_{p^r}$ may be omitted from the definition of $L_{\mathrm{syn}+}$:
    $\mathfrak{s}_{p^r}$ is simply given by $\pi_p \circ \sigma_{p^r}^{\Sigma}$.
    For (5) and (9), let $s \geq r \geq 1$ and $H \in \mathcal{S}_p \setminus \left\{ \emptyset \right\}$.
    By \autoref{dfn:RefinedValuationAxioms} (10), (13), and (14), we have $H^{[p^s]} + p^rG = G_{\leq (t,i)}^{\sigma_{p^r}}$ for some $(t,i) \in \mathbb{N}_{<r} \times \Sigma_{p,H}$ or $H^{[p^s]} + p^rG = G_{< H}^{\mathfrak{s}_{p^r}}$.
    Furthermore, if $r = 1$, let $d = |(\Sigma_{p,H})_{\leq i}| \in \mathbb{N} \cup \left\{ \infty \right\}$.
    Then $\mathrm{dim}_{\mathbb{F}_p}\left( (H^{[p^s]} + pG) / (H + pG) \right) = d$ since every rib with respect to $\sigma_p$ is of size $p$.
    $\mathrm{dim}_{\mathbb{F}_p}\left( (H^{[p^s]} + pG) / (H^{[p^{s+1}]} + pG) \right)$ can be computed similarly.
    For (8), one can see that $a \equiv_{p^r,\bullet} k_{\bullet}$ if and only if $G / \mathfrak{s}_{p^r}(a)$ is discrete and $\sigma_{p^r,k}(a) = -\infty$.
\end{rmk}

\begin{rmk}\label{rmk:Sigmaandk}
    For $H \in \mathcal{S}_p \setminus \left\{ \emptyset \right\}$ such that $G / H$ is discrete, let $i_0 = \min \Sigma_{p,H}$.
    It follows from \autoref{dfn:RefinedValuationAxioms} (7), (8), and (9) that for any $k$ such that $p^r \nmid k$, we have $\sigma_{p^r}(k_H) = (v_p(k),i_0)$.
    Therefore, $\sigma_{p^r}(x - k_H) = (v_p(k),i_0)$ if and only if $\mathfrak{s}_{p^r}(x) < H$, or $\mathfrak{s}_{p^r}(x) = H$ and $\sigma_{p^r,k}(x) = (v_p(k),i_0)$.
    For $\gamma \in (\mathbb{N}_{<r} \times \Sigma_{p,H})_{< (v_p(k),i_0)}$, $\sigma_{p^r}(x - k_H) = \gamma$ if and only if $\mathfrak{s}_{p^r}(x) = H$ and $\sigma_{p^r,k}(x) = \gamma$.
    For $\gamma \in (\mathbb{N}_{<r} \times \Sigma_{p,H})_{> (v_p(k),i_0)}$, $\sigma_{p^r}(x - k_H) = \gamma$ if and only if $\sigma_{p^r}(x) = \gamma$.
    Other conditions such as $\sigma_{p^r}(x - k_H) \leq \gamma$ can be expressed in a similar way.
\end{rmk}
\subsection{Quantifier elimination}

We state and prove relative quantifier elimination for our augmented language, analogous to \autoref{prop:FunctionRelativeQE}.
The idea of the proof largely follows that of \cite[Theorem 1.13]{Cluckers2011QuaEli}.
A group-theoretic lemma used there is also required for our argument, so we record it here for convenience.

\begin{lem}[{\cite[Lemma 3.2]{Cluckers2011QuaEli}}]\label{lem:SwissCheese}
    Let $G$ be an abelian group, $G'$ a subgroup of $G$, and $X$ a subset of the form
    \begin{equation*}
        X = (K_0 + d_0) \setminus \bigcup_{i=1}^m (K_i + d_i),
    \end{equation*}
    where each $K_i$ is a subgroup of $G$, $d_i \in G$, and for any distinct $i,j > 0$, we have $K_i + d_i \subseteq K_0 + d_0$ and $(K_i + d_i) \cap (K_j + d_j) = \emptyset$.
    Then, for $x \in G$, we have $x \in X + G'$ if and only if
    \begin{itemize}
        \item $x - d_0 \in K_0 + G'$, and
        \item $\sum_{\left\{ 1\leq i\leq m \mid x - d_i \in K_i + G' \right\}} | (K_0 \cap G') / (K_i \cap G') |^{-1} < 1$,
    \end{itemize}
    where $\infty^{-1}$ is regarded as $0$.
\end{lem}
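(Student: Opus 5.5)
We unfold the definition of $X + G'$ and reduce everything to a counting statement about cosets inside a single group. An element $x$ lies in $X + G'$ if and only if there is $g \in G'$ with $x - g \in K_0 + d_0$ and $x - g \notin K_i + d_i$ for all $1 \leq i \leq m$.

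\emph{Step 1.} The first condition is solvable exactly when $x - d_0 \in K_0 + G'$, which gives the first bullet. Assume it holds and fix $g_0 \in G'$ with $x - g_0 - d_0 \in K_0$. Then the set of $g \in G'$ with $x - g \in K_0 + d_0$ is the coset $Y = g_0 + A$, where $A = K_0 \cap G'$.

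\emph{Step 2.} For each $i \geq 1$ let
\[
B_i = \{ g \in G' \mid x - g \in K_i + d_i \}.
\]
Since $K_i + d_i \subseteq K_0 + d_0$, we have $B_i \subseteq Y$. We describe $B_i$ as follows.
\begin{itemize}
\item $B_i$ is nonempty if and only if $x - d_i \in K_i + G'$.
\item If $B_i$ is nonempty, then it is a coset of $A_i = K_i \cap G'$: for $g, g' \in B_i$ we have $g - g' \in K_i \cap G'$, and conversely any translate by an element of $A_i$ stays in $B_i$. Moreover $A_i \leq A$, since $K_i \subseteq K_0$.
\item The $B_i$ are pairwise disjoint, because the $K_i + d_i$ are pairwise disjoint.
\end{itemize}
Hence $x \in X + G'$ if and only if the coset $Y$ is \emph{not} covered by the disjoint cosets $B_i$ with $i \in S = \{ i \mid x - d_i \in K_i + G' \}$. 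After translating, this says that $A$ is not covered by pairwise disjoint cosets of the subgroups $A_i$, $i \in S$. Note that $|(K_0 \cap G')/(K_i \cap G')| = [A : A_i]$.

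\emph{Step 3.} It remains to prove a purely group-theoretic fact. If $A$ is a group and $C_i$ ($i \in S$) are finitely many pairwise disjoint cosets of subgroups $A_i \leq A$, then
\[
\sum_{i \in S} [A : A_i]^{-1} \leq 1,
\]
with equality if and only if the $C_i$ cover $A$.

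Let $S_f \subseteq S$ be the set of indices with $[A : A_i] < \infty$, and let $D = \bigcap_{i \in S_f} A_i$. Then $D$ has finite index in $A$, and each $C_i$ with $i \in S_f$ is a union of exactly $[A:D]/[A:A_i]$ cosets of $D$. By disjointness,
\[
\sum_{i \in S_f} [A : A_i]^{-1} \leq 1,
\]
with equality if and only if the cosets $C_i$, $i \in S_f$, cover $A$. The infinite-index terms contribute $0$ to the sum (as $\infty^{-1} = 0$).

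\emph{Main obstacle.} To finish Step 3 we must show that if all the $C_i$, $i \in S$, cover $A$, then those with $i \in S_f$ already cover $A$. This is B.\,H.~Neumann's lemma. I would prove it by induction on the number of distinct infinite-index subgroups occurring among the $A_i$, arguing as follows.
\begin{itemize}
\item Pick one such subgroup $B$. Since $[A:B] = \infty$, there is a coset $a + B$ disjoint from the finitely many cosets of $B$ that occur in the cover.
\item Then $a + B$ is covered by the remaining cosets.
\item Translating, every coset of $B$ appearing in the cover is itself covered by finitely many translates of the remaining cosets, none of which is a coset of $B$.
\item This yields a cover of $A$ by finitely many cosets whose subgroups are among the remaining ones, with one fewer distinct infinite-index subgroup.
\end{itemize}
Iterating removes all infinite-index cosets, so the finite-index ones, namely cosets of groups $A_i$ with $i \in S_f$, cover $A$.

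Disjointness is not preserved by these translates, but it is only needed in the final counting. There it applies to the original $C_i$, $i \in S_f$: a cover by cosets of the $A_i$, $i \in S_f$, descends to a cover of $A/D$, which forces the original $C_i$, $i \in S_f$, to cover as well. To see this, compare densities: the union of finitely many cosets of $D$ has density at most the sum of the densities of the pieces, and a covering forces density $1$. This proves the lemma.
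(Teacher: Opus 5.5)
Your reduction is correct. Steps 1 and 2 turn the statement into the question of whether the coset $Y = g_0 + A$ is covered by the pairwise disjoint cosets $B_i$ ($i \in S$), and the counting in $A/D$ is right. That already proves the implication from $x \in X + G'$ to the two bullets: if the sum were $1$, the finite-index $B_i$ alone would cover $Y$. The converse needs the strong form of Neumann's lemma, and there your last step fails. Your elimination procedure only shows that $A$ is covered by finitely many \emph{translates} of cosets of the $A_i$ with $i \in S_f$. When $S_f \neq \emptyset$ this is automatic, since all cosets of a single finite-index $A_i$ cover $A$. So it carries no information about the original $C_i$, and no density count in $A/D$ can recover it. For example, $2\mathbb{Z}$ and $1 + 2\mathbb{Z}$ cover $\mathbb{Z}$, yet $C_1 = 2\mathbb{Z}$ does not. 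The claim that such a cover ``forces the original $C_i$, $i \in S_f$, to cover'' is a non sequitur, because the translation step loses which cosets occur.

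The repair is short. Use your elimination only for the weak form: a group is never a finite union of cosets of infinite-index subgroups. To see this, iterate until no subgroup is left, which leaves an empty cover of a nonempty group. Now suppose the $C_i$, $i \in S$, cover $A$ but $U = \bigcup_{i \in S_f} C_i \neq A$. Since $U$ is a union of cosets of $D$, some coset $e + D$ is disjoint from $U$, so it is covered by the $C_i$ with $i \notin S_f$. Each nonempty $C_i \cap (e + D)$ is a coset of $A_i \cap D$, and $[D : A_i \cap D] = \infty$ because $[A : D] < \infty = [A : A_i]$. Translating by $-e$ therefore covers $D$ by finitely many cosets of infinite-index subgroups of $D$, contradicting the weak form. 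Alternatively, an invariant mean $\mu$ on the abelian group $A$ gives $1 \leq \sum_{i \in S} \mu(C_i) = \sum_{i \in S_f} [A : A_i]^{-1}$ at once. With this fix your argument is a complete proof. The paper itself gives no proof of this lemma and simply cites Cluckers--Halupczok.
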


\begin{prop}\label{prop:RefinedQE}
    Let $G_1$ and $G_2$ be ordered abelian groups with refined valuations, and let $f = (f_G, (f_{\alpha})_{\alpha \in \mathcal{A}^+})$ be a family of partial functions, where $f_G:G_1 \to G_2$ and $f_{\alpha}$ is from the sort $\alpha$ in $G_1$ to $\alpha$ in $G_2$ for each $\alpha \in \mathcal{A}^+$.
    We define $f_{\Sigma_p}(s,i) = (s,f_{\Sigma_p}(i))$ for $(s,i) \in \mathbb{N} \times \Sigma_p$.
    Suppose the following conditions hold:
    \begin{enumerate}
        \item $\mathrm{Dom}(f_G)$ is a subgroup, and its images under $\sigma_{p^r}^{\Sigma}$ and $\sigma_{p^r,k}^{\Sigma}$ (resp. $\mathfrak{t}_p$) are contained in $\mathrm{Dom}(f_{\Sigma_p})$ (resp. $\mathrm{Dom}(f_{\mathcal{T}_p})$).
        \item $\pi_p\left( \mathrm{Dom}(f_{\Sigma_p}) \right) \subseteq \mathrm{Dom}(f_{\mathcal{S}_p})$.
        \item $f_{\Sigma_p}(\sigma_{p^r}(a)) = \sigma_{p^r}(f_G(a))$, $f_{\Sigma_p}(\sigma_{p^r,k}(a)) = \sigma_{p^r,k}(f_G(a))$, and $f_{\mathcal{T}_p}(\mathfrak{t}_p(a)) = \mathfrak{t}_p(f_G(a))$ for any $a \in \mathrm{Dom}(f_G)$.
        \item $f_G$ is an ordered group embedding that preserves $x =_{\bullet} k_{\bullet}$.
        \item The family $(f_{\alpha})_{\alpha \in \mathcal{A}^+}$ is elementary with respect to the $\mathcal{A}^+$-sorts.
    \end{enumerate}
    Then $f$ is elementary with respect to the full language $L_{\mathrm{syn}+}$.
\end{prop}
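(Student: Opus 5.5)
We reduce to \autoref{prop:FunctionRelativeQE}'s underlying engine, namely the relative quantifier elimination proof of \cite[Theorem 1.13]{Cluckers2011QuaEli}, run in the back-and-forth form. We work in sufficiently saturated $G_1, G_2$ and show that the set of partial maps $f$ satisfying (1)--(5) forms a back-and-forth system. Closing $\mathcal{A}^+$-sorts is straightforward: given any new element of an auxiliary sort, condition (5) together with saturation of the $\mathcal{A}^+$-reduct lets us extend $(f_\alpha)$ elementarily, and (1)--(4) are unaffected. Hence the main task is to extend $f_G$ to a new element $a\in G_1$. We first extend the auxiliary part so that it contains all values $\sigma_{p^r}^{\Sigma}$, $\sigma_{p^r,k}^{\Sigma}$, $\mathfrak{t}_p$, $\pi_p$ of elements of $\langle \mathrm{Dom}(f_G),a\rangle$; this is harmless by the previous step. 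We also reduce to the case that $\mathrm{Dom}(f_G)$ is pure in $G_1$ (closed under division where possible) by adding the $n$-th roots first: the refined valuations $\sigma_{p^{r+1}}(pa)$ are determined by $\sigma_{p^r}(a)$ through axiom (8), and the analogues of \autoref{lem:EasyValuationProperties} make the values of $b$ with $nb\in\mathrm{Dom}(f_G)$ computable.

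For $a$ over a pure domain $A$, we must find $a'\in G_2$ realizing the image of the ``quantifier-free type'' of $a$ over $A$. By saturation it suffices to realize finitely many conditions. As in Cluckers--Halupczok, these are of the form: an order cut condition on $a$ relative to $A$ (together with its position modulo each $\mathfrak{t}_q$, and with the $=_\bullet k_\bullet$ data), and, for each relevant $p^r$ and $c\in A$, prescribed values $\sigma_{p^r}(a-c)$ and $\sigma_{p^r,k}(a-c)$. Since $\sigma_{p^r}$ is a valuation whose ribs have size $p$ (axiom (7)), the set $\{x:\sigma_{p^r}(x-c)\le\gamma\}$ is a coset of the subgroup $G_{\le\gamma}^{\sigma_{p^r}}$, and $\{x:\sigma_{p^r}(x-c)=\gamma\}$ is such a coset minus $p-1$ disjoint cosets of $G_{<\gamma}^{\sigma_{p^r}}$. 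By \autoref{rmk:Sigmaandk}, the $\sigma_{p^r,k}$-conditions translate into the same shape. So after isosceles reductions (\autoref{rmk:TriangleIsIsosceles}), the congruence part of the type is a finite intersection of Swiss-cheese sets, and we need $X\cap(\text{interval})\neq\emptyset$ in $G_2$.

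The core step is to show that such a Swiss cheese meets every relevant interval. We handle this exactly as in \cite{Cluckers2011QuaEli}: choose a convex subgroup $H'$ from the $\mathfrak{t}_q$ data so that the interval contains a full coset of $H'$ (or is controlled by $=_\bullet k_\bullet$ in the discrete case). We then apply \autoref{lem:SwissCheese} with $G'=H'$, which reduces nonemptiness of $X\cap(H'+d)$ to membership conditions $x-d_i\in K_i+H'$ and the counting inequality involving indices $|(K_0\cap H')/(K_i\cap H')|$. The subgroups $K_i$ are of the form $G^{\sigma_{p^r}}_{\square\gamma}$, $G^{\mathfrak{s}_{p^r}}_{<H}$, or their intersections and sums with $p^{r-1}G$. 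Axioms (10)--(14) express $K_i+p^{r-1}G$ and $K_i\cap p^{r-1}G$ again as such subgroups, which lets us compute these indices and membership conditions in terms of the $\Sigma_p$-sort data and the $\pi_p$, $M_p^k$, $\simeq_z$ structure. Those are preserved by $f$ thanks to (3) and (5), so the same inequality holds in $G_2$, yielding $a'$.

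The main obstacle will be this index bookkeeping. We must verify that every index $|(K_0\cap H')/(K_i\cap H')|$ and every condition $x-d_i\in K_i+H'$ is expressible via $\sigma_{p^r}$, $\sigma_{p^r,k}$ values and sort-$\Sigma_p$ formulas, uniformly in $r$. For this we would first treat a single prime power $p^r$, inducting on $r$ via axioms (8) and (11)--(14) to pass from $p^r$ to $p^{r-1}$. We would then combine distinct primes by \autoref{lem:CRT}, which makes the Swiss cheeses for different primes independent.
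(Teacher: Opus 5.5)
Your core step has a genuine gap. Applying \autoref{lem:SwissCheese} with $G'=H'$ a \emph{convex} subgroup only decides whether a \emph{given} coset $d+H'$ meets the Swiss cheese $X$. To transfer to $G_2$ you must still produce $d'\in G_2$ with $d'+f(H')$ inside the image interval and with the same pattern $\{i : d'-d_i'\in K_i'+f(H')\}$. Whenever the level $L_i\in\mathcal{S}_p$ of $\gamma_i$ satisfies $L_i\geq H'$, one has $H'\subseteq L_i+p^rG\subseteq K_i$, so $K_i+H'=K_i$. Finding $d'$ is then the original problem (congruence conditions plus an order condition) over again, so the argument is circular. Moreover, the indices $|(K_0\cap H')/(K_i\cap H')|$ are not what axioms (11)--(14) of \autoref{dfn:RefinedValuationAxioms} compute; those describe $K_i\cap p^{r-1}G$ and $K_i+p^{r-1}G$.

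The missing idea is the $n$-regularity of the $\mathfrak{t}_n$-ribs (\autoref{lem:nRegurality}), which decouples order from congruences. Take $n$ divisible by all prime powers occurring and normalize the order and $\mathfrak{t}$-conditions to a single level $H=\mathfrak{t}_n(a-c_l)$. Then every $x$ with $\mathfrak{s}_n(x-c_l)\leq H$ can be moved by an element of $nG$ into the required interval, while all $\sigma$-conditions are $nG$-invariant. In the discrete case $a/H=c_l/H+k_H$, the condition instead becomes $(x-c_l)/H\equiv_n k_H$, or $\mathfrak{s}_n(x-c_l)<H$ if $n\mid k$. So the order part costs only one extra $\sigma$-condition, and what remains is a pure congruence problem.

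That problem is solved with \autoref{lem:SwissCheese} applied to $G'=p^{r-1}G$, not a convex subgroup, by induction on $r$. For $r=1$, $|K_0/K_i|=p^d$ with $d$ the length of an interval in $\Sigma_p$. For $r>1$, axioms (11) and (12) give $|(K_0\cap p^{r-1}G)/(K_i\cap p^{r-1}G)|$, and (13) and (14) rewrite $K_i+p^{r-1}G+d_i$ as exponent-$(r-1)$ cosets. The induction hypothesis then gives $f(\Phi_{<r})\cap(f(\Phi_r)+p^{r-1}G_2)\neq\emptyset$. That is where your ingredients ((10)--(14), induction on $r$, \autoref{lem:CRT} across primes) actually belong.

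There is a second, smaller gap. You list only conditions on $a-c$, but the type also contains conditions on $ma-c$. Making $\mathrm{Dom}(f_G)$ pure does not by itself turn, say, a condition on $\sigma_{p^r}(pa-c)$ with $c\notin pG_1$ into conditions on the values $\sigma_{p^s}(a-c')$ for $c'\in\mathrm{Dom}(f_G)$, and you give no argument that it does. The paper sidesteps this as follows. For a finite piece of the type, rewrite all formulas with a common multiplier $m$, using \autoref{lem:EasyValuationProperties} and axioms (7) and (8). Then realize in $G_2$ the image of the $m=1$ type of $ma$ together with $x\equiv_m 0$, and divide by $m$.

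A minor slip: $\{x:\sigma_{p^r}(x-c)=\gamma\}$ is $c+G^{\sigma_{p^r}}_{\leq\gamma}$ minus the single coset $c+G^{\sigma_{p^r}}_{<\gamma}$, i.e.\ a union of $p-1$ cosets of $G^{\sigma_{p^r}}_{<\gamma}$.
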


\begin{proof}
    Observe that by (3), (5), and the definition of $\pi_p$, we have $f_{\mathcal{S}_p}(\mathfrak{s}_{p^r}(a)) = \mathfrak{s}_{p^r}(f_G(a))$ for any $a \in \mathrm{Dom}(f_G)$.
    We show that if $G_2$ is a monster model, then for every element in every sort of $G_1$, there exists an expansion $\widetilde{f} = (\widetilde{f}_G, (\widetilde{f}_{\alpha})_{\alpha \in \mathcal{A}^+})$ of $f$ containing that element in its domain and satisfying the conditions in \autoref{prop:RefinedQE}.
    Once this is proven, a standard back-and-forth argument shows that $f$ is elementary.
    By saturation, we may assume that $\mathrm{Dom}(f_{\alpha})$ is the whole of $\alpha$ for all $\alpha \in \mathcal{A}^+$.
    For $a \in G_1$, let $\mathrm{p}_a^{\mathrm{t}}(x)$ be the set of formulas of the form
    \begin{itemize}
        \item $mx - c \mathrel{\square} 0\ (\square \in \left\{ =,>,< \right\})$,
        \item $\mathfrak{t}_p(mx - c) = H$,
        \item $(mx - c) / H \mathrel{\square} k_H\ (\text{$G / H$ is discrete, } \square \in \left\{ =,\neq \right\})$
    \end{itemize}
    $(p \in \mathbb{P}, m \in \mathbb{Z} \setminus \left\{ 0 \right\}, c \in \mathrm{Dom}(f_G), H \in \mathcal{T}_p, k \in \mathbb{Z} \setminus \left\{ 0 \right\})$ that are satisfied by $a$ (we regard $(mx - c) / H = k_H$ as an abbreviation for $\mathfrak{t}_p(mx - c) = H \land mx - c =_{\bullet} k_{\bullet}$), and let $\mathrm{p}_a^{\mathrm{s}}(x)$ be the set of formulas of the form
    \begin{itemize}
        \item $\sigma_{p^r}(mx - c) = \gamma$ ($\gamma \in (\mathbb{N}_{<r} \times \Sigma_p) \cup \left\{ -\infty \right\}$),
        \item $\sigma_{p^r}(mx - c - k_H) = \gamma$ ($H \in \mathcal{S}_p$, $G / H$ is discrete, $\gamma \in \mathbb{N}_{<r} \times \Sigma_{p,H}$),
        \item $\sigma_{p^r,k}(mx - c) = -\infty$
    \end{itemize}
    ($p \in \mathbb{P}$, $r \geq 1$, $m \in \mathbb{Z} \setminus \left\{ 0 \right\}$, $c \in \mathrm{Dom}(f_G)$, $1 \leq k < p^r$) that are satisfied by $a$ (the second condition can be expressed in $L_{\mathrm{syn}+}$; see \autoref{rmk:Sigmaandk}).
    Let $f(\mathrm{p}_a^{\mathrm{t}})$ and $f(\mathrm{p}_a^{\mathrm{s}})$ be the set of formulas obtained from $\mathrm{p}_a^{\mathrm{t}}$ and $\mathrm{p}_a^{\mathrm{s}}$ by replacing $c$, $H$, and $\gamma$ with $f_G(c)$, $f_{\mathcal{A}}(H)$, and $f_{\Gamma_p}(\gamma)$, respectively.
    It suffices to show that for any $a \in G_1$, there exists $b \in G_2$ satisfying $(f(\mathrm{p}_a^{\mathrm{t}}) \cup f(\mathrm{p}_a^{\mathrm{s}}))(b)$; once this is established, defining the expansion of $f_G$ by $\widetilde{f}_G(ma - c) = mb - f_G(c)$ completes the proof.
    
    \setcounter{clm}{0}
    \begin{clm}
        Let $\mathrm{q}_a^{\mathrm{t}}$ and $\mathrm{q}_a^{\mathrm{s}}$ be the subsets of $\mathrm{p}_a^{\mathrm{t}}$ and $\mathrm{p}_a^{\mathrm{s}}$ consisting of formulas with $m = 1$.
        Suppose that for any $a \in G_1$, there exists $b' \in G_2$ satisfying $(f(\mathrm{q}_a^{\mathrm{t}}) \cup f(\mathrm{q}_a^{\mathrm{s}}))(b')$.
        Then, for any $a \in G_1$, there exists $b \in G_2$ satisfying $(f(\mathrm{p}_a^{\mathrm{t}}) \cup f(\mathrm{p}_a^{\mathrm{s}}))(b)$.
    \end{clm}

    \begin{proof}[Proof of Claim 1] \renewcommand{\qedsymbol}{$\blacksquare_{\mathrm{Claim}}$}
        Consider a formula $\phi \in \mathrm{p}_a^{\mathrm{t}} \cup \mathrm{p}_a^{\mathrm{s}}$ in which the term $mx - c$ appears.
        By \autoref{lem:EasyValuationProperties} and \autoref{dfn:RefinedValuationAxioms} (8), for any $z \neq 0$, there exists a formula $\psi \in \mathrm{p}_a^{\mathrm{t}} \cup \mathrm{p}_a^{\mathrm{s}}$ equivalent to $\phi$ in which $zmx - zc$ appears (note that for $z$ coprime to $p$, we have $\sigma_{p^r}(zx) = \sigma_{p^r}(x)$ by \autoref{dfn:RefinedValuationAxioms} (7)).
        Therefore, for any given finite subset of $\mathrm{p}_a^{\mathrm{t}} \cup \mathrm{p}_a^{\mathrm{s}}$, we may assume that a common $m > 0$ appears in all formulas in that subset.
        We denote this subset by $\Phi(mx)$.
        Then $\left\{ x \equiv_m 0 \right\} \cup \Phi(x) \subseteq \mathrm{q}_{ma}^{\mathrm{t}} \cup \mathrm{q}_{ma}^{\mathrm{s}}$ (note that $x \equiv_m 0 \iff\bigwedge_{p \mid m} (\sigma_{p^{v_p(m)}}(x) = -\infty)$), and hence there exists a solution $b' \in G_2$ to $\left\{ x \equiv_m 0 \right\} \cup f(\Phi)(x)$.
        There exists $b \in G_2$ such that $mb = b'$, which then satisfies $f(\Phi)(mx)$.
    \end{proof}

    \begin{clm}
        For $a \in G_1$, if there exists $b' \in G_2$ satisfying $f(\mathrm{q}_a^{\mathrm{s}})(b')$, then there exists $b \in G_2$ satisfying $(f(\mathrm{q}_a^{\mathrm{t}}) \cup f(\mathrm{q}_a^{\mathrm{s}}))(b)$.
    \end{clm}

    \begin{proof}[Proof of Claim 2] \renewcommand{\qedsymbol}{$\blacksquare_{\mathrm{Claim}}$}
        We may assume that $a \notin \mathrm{Dom}(f_G)$.
        Choose finite subsets $\Phi^{\mathrm{t}} \subseteq \mathrm{q}_a^{\mathrm{t}}$ and $\Phi^{\mathrm{s}} \subseteq \mathrm{q}_a^{\mathrm{s}}$ arbitrarily, and let $n$ be such that $p^r \mid n$ for all $p^r$ appearing in $\Phi^{\mathrm{t}} \cup \Phi^{\mathrm{s}}$.
        Note that $(x \equiv_n y) \rightarrow (\Phi^{\mathrm{s}}(x) \leftrightarrow \Phi^{\mathrm{s}}(y))$, and that the same holds for $f(\Phi^{\mathrm{s}})$.
        It suffices to find $\Psi \subseteq \mathrm{q}_a^{\mathrm{s}}$ such that $f(\Psi)(x) \rightarrow \exists y (x \equiv_n y \land f(\Phi^{\mathrm{t}})(y))$, since we can then choose $b' \in (f(\Psi) \cup f(\Phi^{\mathrm{s}}))(G_2)$.

        First, let $c \in \mathrm{Dom}(f_G)$ and $p \mid n$.
        Set $K = \mathfrak{t}_p(a - c)$, $K^+ = \mathfrak{t}_p^+(a - c)$, $H = \mathfrak{t}_n(a - c)$, and $H^+ = \mathfrak{t}_n^+(a - c)$.
        By \autoref{lem:DifficultValuationProperties} (4), we have $K \leq H < H^+ \leq K^+$, which implies that $\mathfrak{t}_n(x - c) = H \rightarrow \mathfrak{t}_p(x - c) = K$.
        Furthermore, consider the case where $G_1 / K$ is discrete.
        If $K < H$, then $\mathfrak{t}_n(x - c) = H \rightarrow (x - c) / K \neq k_{K}$ for every $k \neq 0$.
        If $K = H$, then $(x - c) / H = k_H \leftrightarrow (x - c) / K = k_{K}$.
        These formulas also hold when $c$, $H$, and $K$ are replaced by $f_G(c)$, $f_{\mathcal{A}}(H)$, and $f_{\mathcal{A}}(K)$.

        By this observation, there exists a finite set $\Phi'$ consisting of formulas
        \begin{itemize}
            \item $x - c \mathrel{\square} 0\ (\square \in \left\{ >,< \right\})$,
            \item $\mathfrak{t}_n(x - c) = H$,
            \item $(x - c) / H \mathrel{\square} k_H\ (\text{$G / H$ is discrete, } \square \in \left\{ =,\neq \right\})$
        \end{itemize}
        $(c \in \mathrm{Dom}(f_G), H = \mathfrak{t}_n(a - c), k \in \mathbb{Z} \setminus \left\{ 0 \right\})$ such that $a \in \Phi'(G_1) \subseteq \Phi^{\mathrm{t}}(G_1)$ and $f(\Phi')(G_2) \subseteq f(\Phi^{\mathrm{t}})(G_2)$.
        Let $c_1,\dots,c_m$ be the elements appearing in $\Phi'$, and set $H_i = \mathfrak{t}_n(a - c_i)$.
        If $H_i < H_j$, then $\mathfrak{t}_n(c_i - c_j) = H_j$, and thus $\mathfrak{t}_n(x - c_i) = H_i \rightarrow \mathfrak{t}_n(x - c_j) = H_j \land (x - c_j) / H_j = (c_i - c_j) / H_j$.
        The same holds when the elements are mapped by $f$.
        Hence, we may assume that $H_1 = \dots = H_m$, and denote this by $H$.\footnote{If $H = \emptyset$, we replace, say, $c_l / H$ and $a / H$ with $c_l$ and $a$. Since the predicate zero in \autoref{dfn:Lsyn} (4) is included in the language, $H^+ > \left\{ 0 \right\}$ implies $f(H)^+ > \left\{ 0 \right\}$,  so the subsequent argument remains valid.}
        Choose (if they exist) the maximal $c_l$ and the minimal $c_r$ such that $c_l / H < a / H < c_r / H$.
        We assume that $c_l$ exists (the case where only $c_r$ exists is handled symmetrically).
        \begin{itemize}
            \item If $G / H$ is dense, let $\Phi''$ be $\mathfrak{t}_n(x - c_l) = H \land c_l / H < x / H < c_r / H$,  and let $\widetilde{\Psi}$ be $\mathfrak{s}_n(x - c_l) \leq H$.
            \item If $G / H$ is discrete and $c_l / H + k_H < a / H < c_r / H - k_H$ for all $k > 0$, let $\Phi''$ be $\mathfrak{t}_n(x - c_l) = H \land c_l / H + k_H < x / H < c_r / H - k_H$ for a sufficiently large $k$, and let $\widetilde{\Psi}$ be $\mathfrak{s}_n(x - c_l) \leq H$.
            \item If $G / H$ is discrete and $a / H = c_l / H + k_H$ for some $k > 0$ (the case for $c_r$ is analogous), let $\Phi''$ be $(x - c_l) / H = k_H$.
            If $n \mid k$, let $\widetilde{\Psi}$ be $\mathfrak{s}_n(x - c_l) < H$; if $n \nmid k$, let $\widetilde{\Psi}$ be $(x - c_l) / H \equiv_n k_H$.
        \end{itemize}
        In all cases, we have $\Phi''(G_1) \subseteq \Phi'(G_1)$ and $\widetilde{\Psi}(x) \rightarrow \exists y (x \equiv_n y \land \Phi''(y))$.
        Indeed, in the first two cases, if $\mathfrak{s}_n(x - c_l) \leq H$, then there exists $z$ such that $x + nz - c_l \in H^+$.
        By the $n$-regularity of $H^+ / H$, there exists $w$ satisfying $\Phi''(x + nw)$.
        The last case is verified similarly.
        The same also holds for $f(\Phi'')$, $f(\Phi')$, and $f(\widetilde{\Psi})$.
        Finally, there exists a finite subset $\Psi \subseteq \mathrm{q}_a^{\mathrm{s}}$ such that $\Psi(G_1) \subseteq \widetilde{\Psi}(G_1)$ and $f(\Psi)(G_2) \subseteq f(\widetilde{\Psi})(G_2)$; for instance, $\mathfrak{s}_n(x - c_l) \leq H$ is implied by $\bigwedge_{p \mid n} (\sigma_{p^{v_p(n)}}(x - c_l) = \gamma_p)$, where $\gamma_p = \sigma_{p^{v_p(n)}}(a - c_l)$.
    \end{proof}

    (*) Let $\phi(x)$ be one of the following formulas:
    \begin{itemize}
        \item $x = x$,
        \item $\sigma_{p^r}(x - c) \mathrel{\square} \gamma$ ($\gamma \in \mathbb{N}_{<r} \times \Sigma_p$),
        \item $\sigma_{p^r}(x - c - k_H) \mathrel{\square} \gamma$ ($H \in \mathcal{S}_p$, $G / H$ is discrete, $\gamma \in \mathbb{N}_{<r} \times \Sigma_{p,H}$),
        \item $(x - c) / H \equiv_{p^r} k_H$ ($H \in \mathcal{S}_p$, $G / H$ is discrete)
    \end{itemize} 
    ($p \in \mathbb{P}$, $r \geq 1$, $c \in \mathrm{Dom}(f_G)$, $1 \leq k < p^r$, $\square \in \left\{ \leq, < \right\}$).
    $\phi(G_1)$ can be expreesed as a coset $K + d$, where $K$ is one of the subgroups $G_1$, $(G_1)_{\leq \gamma}^{\sigma_{p^r}}$, $(G_1)_{< \gamma}^{\sigma_{p^r}}$, or $(G_1)_{< H}^{\mathfrak{s}_{p^r}}$, and $d$ is either $c$ or $c + k_H$.
    We denote $f(\phi)(G_2)$ by $K' + d'$.

    \begin{clm}
        For any two cosets $K_1 + d_1$ and $K_2 + d_2$ in (*) with common $p$ and $r$, we have $K_1 + d_1 \subseteq K_2 + d_2$ if and only if $K'_1 + d'_1 \subseteq K'_2 + d'_2$ (note that any two such cosets are either disjoint or one is contained in the other).
    \end{clm}

    \begin{proof}[Proof of Claim 3]
        We may assume that $K_1 \subseteq K_2$ (or equivalently, $K_1' \subseteq K_2'$).
        Then $K_1 + d_1 \subseteq K_2 + d_2$ if and only if $d_1 - d_2 \in K_2$.
        Writing $c_i$ as $c_i + 0_{H_i}$, if necessary, each $d_i$ is of the form $c_i + k_{i,H_i}$.
        For simplicity, suppose that $K_2= (G_1)_{\leq \gamma_2}^{\sigma_{p^r}}$ (the other cases are analogous).
        Then, $d_1 - d_2 \in K_2$ if and only if $\sigma_{p^r}((c_1 - c_2) + (k_1 - k_2)_{H_2}) \leq \gamma_2$ 
        (if $H_1 < H_2$, we may replace $c_1 + k_{1,H_1}$ with $c_1 + 0_{H_2}$).
        By \autoref{rmk:Sigmaandk}, $f$ preserves this condition.
    \end{proof}
    
    \begin{clm}
        For any $a \in G_1$, there exists $b \in G_2$ satisfying $f(\mathrm{q}_a^{\mathrm{s}})(b)$.
    \end{clm}

    \begin{proof}[Proof of Claim 4] \renewcommand{\qedsymbol}{$\blacksquare_{\mathrm{Claim}}$}
        Let $\Phi(x)$ be an arbitrary finite subset of $\mathrm{q}_a^{\mathrm{s}}$, and decompose it as $\Phi_{p_1} \cup \dots \cup \Phi_{p_m}$ according to the prime numbers appearing in the formulas.
        By \autoref{lem:CRT} (2), it suffices to find a solution to each $f(\Phi_{p_i})$ separately.
        Thus, we may assume that a single prime $p$ appears in $\Phi$.
        Choose $r \geq 1$ such that $\Phi = \Phi_{<r} \cup \Phi_r$, where the exponents of $p$ appearing in $\Phi_{<r}$ are less than $r$, and those in $\Phi_r$ are $r$.

        $\Phi_r(G_1)$ is equivalent to a boolean combination of cosets in (*).
        Thus, we can find a finite set of formulas $\Psi$ defining $(K_0 + d_0) \setminus \bigcup_{i=1}^m (K_i + d_i)$, where $K_i + d_i \subseteq K_0 + d_0$ and $(K_i + d_i) \cap (K_j + d_j) = \emptyset$ for any distinct $i,j > 0$, such that $a \in \Psi(G_1) \subseteq \Phi_r(G_1)$ and $f(\Psi)(G_2) \subseteq f(\Phi_r)(G_2)$.
        \begin{itemize}
            \item If $r = 1$, then $|K_0 / K_i| = |K'_0 / K'_i|$ for every $i \leq m$.
            For instance, if $K_0 = (G_1)_{\leq (0,j_0)}^{\sigma_p}$ and $K_i = (G_1)_{\leq (0,j_i)}^{\sigma_p}$ for $j_0,j_i \in \Sigma_p$, let $d = |(j_i,j_0]| \in \mathbb{N} \cup \left\{ \infty \right\}$.
            It follows that $d = |(f_{\Sigma_p}(j_i),f_{\Sigma_p}(j_0)]|$, and that $|K_0 / K_i| = p^d = |K'_0 / K'_i|$.
            \item Since $G_{<H}^{\mathfrak{s}_{p^r}} \cap p^{r-1}G = p^{r-1}G_{<H}^{\mathfrak{s}_p}$ and by \autoref{dfn:RefinedValuationAxioms} (11) and (12), for every $i$ there exists a subgroup $L_i$ in (*) with $r = 1$ such that $K_i \cap p^{r-1}G_1 = p^{r-1}L_i$ and $K'_i \cap p^{r-1}G_2 = p^{r-1}L'_i$.
            Because the torsion-freeness of $G_1$ implies $p^{r-1}L_0 / p^{r-1}L_i \cong L_0 / L_i$, we have $|(K_0 \cap p^{r-1}G_1) / (K_i \cap p^{r-1}G_1)| = |(K'_0 \cap p^{r-1}G_2) / (K'_i \cap p^{r-1}G_2)|$.
            \item Since $G_{<H}^{\mathfrak{s}_{p^r}} + p^{r-1}G = G_{<H}^{\mathfrak{s}_{p^{r-1}}}$ and by \autoref{dfn:RefinedValuationAxioms} (13) and (14), for every $i$ there exists a coset $L_i + e_i$ in (*) with exponent $r-1$ such that $K_i + p^{r-1}G_1 + d_i = L_i + e_i$ and $K'_i + p^{r-1}G_2 + d'_i = L'_i + e'_i$.
        \end{itemize}
        We now proceed by induction on $r$.
        If $r = 1$, \autoref{lem:SwissCheese} implies that $\Psi(G_1) \neq \emptyset \iff \sum_{1\leq i\leq m} | K_0 / K_i |^{-1} < 1 \iff \sum_{1\leq i\leq m} | K'_0 / K'_i |^{-1} < 1 \iff f(\Psi)(G_2) \neq \emptyset$, and hence $f(\Phi_1)(G_2) \neq \emptyset$.
        Suppose $r > 1$.
        It suffices to find $b' \in f(\Phi_{<r})(G_2) \cap (f(\Phi_r)(G_2) + p^{r-1}G_2)$.
        Let $I_{\top}$ be the set of indices $1 \leq i \leq m$ such that $a \in K_i + p^{r-1}G_1 + d_i$, and let $I_{\bot} = \left\{ 1,\dots,m \right\} \setminus I_{\top}$.
        By \autoref{lem:SwissCheese}, $\sum_{i \in I_{\top}} |(K_0 \cap p^{r-1}G_1) / (K_i \cap p^{r-1}G_1)|^{-1} < 1$.
        Let $\Psi'(x)$ be a set of formulas defining $(K_0 + p^{r-1}G_1 + d_0) \setminus \bigcup_{i \in I_{\bot}} (K_i + p^{r-1}G_1 + d_i)$.
        Again by \autoref{lem:SwissCheese}, $a \in \Psi'(G_1) \subseteq \Psi(G_1) + p^{r-1}G_1$ and $f(\Psi')(G_2) \subseteq f(\Psi)(G_2) + p^{r-1}G_2$.
        Since $\Psi'$ is implied by finitely many formulas in $\mathrm{q}_a^{\mathrm{s}}$ in which only exponent $r-1$ appears, the induction hypothesis applies.
    \end{proof}

    It completes the proof.
\end{proof}
\subsection{Proof of distality}

\begin{lem}\label{lem:RefinedAuxiliaryDistality}
    For an ordered abelian group $G$ with refined valuations, let $G_{\mathcal{A}^+}$ be the structure whose sorts consist of all $\mathcal{A}^+$-sorts with symbols (2), (3), and (4) in \autoref{dfn:Lsyn} and (1), (2), and (3) in \autoref{dfn:LsynPlus}.
    Then $G_{\mathcal{A}^+}$ is distal.
\end{lem}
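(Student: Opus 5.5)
We will adapt the proof of \autoref{lem:AuxiliaryDistality}, using the criterion of the first proposition in \autoref{sec:GeneralitiesOnDistality} with $|\overline{a}_k| = 1$ and $\overline{b}$ finite. We may assume $G_{\mathcal{A}^+}$ is a monster model. Take $I,J \models DLO$, a non-constant indiscernible sequence $(a_k)_{k \in I+(c)+J}$ in a single sort $\gamma \in \mathcal{A}^+$, and a finite tuple $\overline{b}$ such that $(a_k)_{k \in I+J}$ is indiscernible over $\overline{b}$. The sequence is strictly monotone; say it is increasing. The goal is to produce an automorphism of $G_{\mathcal{A}^+}$ that fixes $\overline{b}$ and sends some $a_i$ ($i \in I$) to $a_c$. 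As before, fix $h<i$ in $I$ and $j \in J$, and take an automorphism $f$ sending $(a_h,a_i,a_j)$ to $(a_h,a_c,a_j)$. We then cut $f$ off: it acts as $f$ on a suitable ``interval'' between $a_h$ and $a_j$, and as the identity elsewhere.

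The new ingredients are the function $\pi_p:\Sigma_p\to\mathcal{S}_p$ and the relations $\simeq_z$. These link different sorts, and they also link points of $\Sigma_p$ across the order. So the interval must be chosen compatibly, in a way that depends on the sort $\gamma$.

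If $\gamma$ is one of the old sorts, or the sequence lies in $\Sigma_p$ with distinct $\pi_p$-images, we work at the coarse level. In the sort $\mathcal{S}_p$ we take the interval $[\pi(a_h),\pi(a_j)]$, or $[a_h,a_j]$, and apply $f$ on it. On $\Sigma_p$ we apply $f$ exactly to the elements whose $\pi_p$-image lies in that interval. Every $\simeq_z$-class lies in a single fiber of $\pi_p$, by \autoref{dfn:RefinedValuationAxioms} (3). Also, $f$ maps each fiber over the interval onto the corresponding fiber. Hence the glued map preserves $<$, every $\simeq_z$, $\pi_p$, and the unary predicates.

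By indiscernibility over $\overline{b}$, each $b_t$ lies outside the interval: $b_t$ is above or below the whole sequence, or, in $\Sigma_p$, its $\pi_p$-image is. The remaining possibility is that $\pi_p(b_t)$ equals the constant value $\pi(a_k)$ in the case where only $\Sigma$ is varying. That case is handled below.

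For the case where the $a_k$ lie in $\Sigma_p$ with constant $\pi_p(a_k)=H$, we pass to the finer structure of $\Sigma_{p,H}$ as a good value. Consider the $\simeq_0$-classes of the $a_k$.

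If these classes are distinct, then by indiscernibility either no $a_k,a_l$ ($k \neq l$) are $\simeq_z$-related for any $z$, or the relation between them is uniform. The latter is impossible, since $z$ would have to be additive along triples. So the images of the $a_k$ in the quotient by the relation ``$\exists z\, (x\simeq_z y)$'' are distinct. We use the convex hull of the classes of $a_h,a_j$ under this coarser equivalence as the interval. This hull is convex by the third good-value axiom. Each $b_t$ either lies outside it, or its class is comparable with all the classes of the $a_k$ in a way that, by indiscernibility over $b_t$, forces it to lie outside.

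If instead all $a_k$ are $\simeq_0$-equivalent, the class is convex and we take the interval $[a_h,a_j]$ inside that single class. Here $\simeq_z$ with $z\neq 0$ relates whole classes in a way that depends only on the class. Since $f$ maps the class onto itself, we can restrict $f$ to a map that is the identity elsewhere and still preserves $\simeq_z$. This uses $i\simeq_z j \wedge j\simeq_0 k \Rightarrow i\simeq_z k$, so $\simeq_z$ is determined by $\simeq_0$-classes.

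The main obstacle is verifying that these glued maps preserve the $\simeq_z$ relations and $\pi_p$. In particular, $f$ must preserve classes setwise when it maps an interval to itself, and it must fix the endpoints' classes. We will arrange this by choosing $f$ to fix $a_h,a_j$, and by taking intervals saturated under the relevant equivalence.
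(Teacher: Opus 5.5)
Your proposal is correct. Its skeleton matches the paper's: take an automorphism $f$ with $(a_h,a_i,a_j)\mapsto(a_h,a_c,a_j)$, then cut it off so it acts as $f$ on a convex region containing $a_i$ and as the identity elsewhere. Where you differ is in how the relations $\simeq_z$ are handled.

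\textbf{The paper's route.} Whenever the sequence lies in $\Sigma_p$, the paper uses one uniform region. This is the open interval $(a_h,a_j)$ in $\Sigma_p$, together with $(\pi_p(a_h),\pi_p(a_j))$ in the $\mathcal{A}$-sorts and in the other $\Sigma_q$ via $\pi_q$. It then checks boundary-crossing pairs directly. If $a_h<x<a_j\le y$ and $x\simeq_z y$, the convexity axiom of good values gives $x\simeq_k a_j$ for some $k\ge 0$. Hence $x\simeq_z y \iff a_j\simeq_{z-k} y$, and the link $x\simeq_k a_j$ survives under $f$ because $f$ fixes $a_j$.

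\textbf{Your route.} You instead choose a region saturated under the relevant equivalence, in three cases:
\begin{itemize}
\item When $\pi_p(a_k)$ varies, you take whole $\pi$-fibers.
\item When the $a_k$ lie in one fiber but in distinct $\simeq_0$-classes, you take whole classes of $x\approx y :\iff \exists z\,(x\simeq_z y)$. This relation is an equivalence with convex classes, and your additivity argument correctly shows the $a_k$ are pairwise $\not\approx$.
\item When all $a_k$ lie in one $\simeq_0$-class, you take $[a_h,a_j]$ inside that class and use that $\simeq_z$ is $\simeq_0$-invariant.
\end{itemize}
In each case either no $\simeq_z$-link crosses the boundary, or crossing links do not depend on which inside point is used. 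Preservation of $\simeq_z$ is then immediate.

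\textbf{Trade-off.} Your approach costs a three-way case split. It also needs an extra check that each $b_t$ avoids the enlarged region in the middle case. You supply this check: $b_t\approx a_k$ would, by indiscernibility over $b_t$, force $a_k\simeq_0 a_l$. The paper's argument is uniform and shorter. Yours makes each individual verification trivial.

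\textbf{One point to state explicitly.} In your coarse case, the interval must be taken in the combined order on all $\mathcal{A}$-sorts, with $f$ applied over it in every $\Sigma_q$. Applying it only in $\mathcal{S}_p$ and $\Sigma_p$ is not enough, because the cross-sort $\le$ ties all these sorts together. The paper writes this out.
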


\begin{proof}
    We modify the proof of \autoref{lem:AuxiliaryDistality}.
    We may assume $G_{\mathcal{A}^+}$ is a monster model.
    Let $I,J \models DLO$, $(a_k)_{k \in I+(c)+J}$ be a non-constant indiscernible sequence from a sort $\gamma \in \mathcal{A}^+$, and $\overline{b} = (b_1,\dots,b_m)$ be such that $(a_k)_{k \in I+J}$ is indiscernible over $\overline{b}$.
    We may assume the sequence is strictly increasing.
    For any $h < i$ in $I$ and $j$ in $J$, there exists an automorphism $(f_{\alpha})_{\alpha \in \mathcal{A}^+}$ of $G_{\mathcal{A}^+}$ such that $f_{\gamma}((a_h,a_i,a_j)) = (a_h,a_c,a_j)$.
    We define a family of functions $(g_{\alpha})_{\alpha \in \mathcal{A}^+}$ as follows:
    \begin{itemize}
        \item If $a_k$ is in an $\mathcal{A}$-sort, then $g_{\alpha}(x) = f_{\alpha}(x)$ if ($x$ is in an $\mathcal{A}$-sort and $a_h < x < a_j$) or ($x \in \Sigma_q$ for some $q$ and $a_h < \pi_q(x) < a_j$), and $g_{\alpha}(x) = x$ otherwise.
        \item If $a_k \in \Sigma_p$ for some $p$, then $g_{\alpha}(x) = f_{\alpha}(x)$ if ($x \in \Sigma_p$ and $a_h < x < a_j$), ($x \in \Sigma_q$ for some $q \neq p$ and $\pi_p(a_h) < \pi_q(x) < \pi_p(a_j)$), or ($x$ is in an $\mathcal{A}$-sort and $\pi_p(a_h) < x < \pi_p(a_j)$), and $g_{\alpha}(x) = x$ otherwise.
    \end{itemize}
    It is straightforward to see that $(g_{\alpha})_{\alpha \in \mathcal{A}^+}$ are all bijective, fixes $\overline{b}$, maps $a_i$ to $a_c$, and preserves the orderings as well as all unary predicates.
    It remains to verify that $(g_{\alpha})_{\alpha \in \mathcal{A}^+}$ preserve the relations $(\simeq_z)_{z\in\mathbb{Z}}$.
    Let $x,y \in \Sigma_q$.
    If $\pi_q(x) \neq \pi_q(y)$, then $x \not\simeq_z y$ for all $z$ by \autoref{dfn:RefinedValuationAxioms} (3).
    Since $\pi_q(x) = \pi_q(y) \iff \pi_q(g_{\Sigma_q}(x)) = \pi_q(g_{\Sigma_q}(y))$, we may assume that $\pi_q(x) = \pi_q(y)$.
    The only nontrivial case is when $a_k \in \Sigma_p$ and $p = q$, since otherwise $(g_{\Sigma_q}(x), g_{\Sigma_q}(y)) = (x, y)$ or $(g_{\Sigma_q}(x), g_{\Sigma_q}(y)) = (f_{\Sigma_q}(x), f_{\Sigma_q}(y))$.
    Suppose $a_h < x < a_j \leq y$ (the other cases are treated similarly).
    Note that $x \simeq_z a_j \iff g_{\Sigma_p}(x) \simeq_z a_j$, and likewise for $y$.
    If there exists $k \geq 0$ such that $x \simeq_k a_j$, then $x \simeq_z y \iff a_j \simeq_{z-k} y \iff a_j \simeq_{z-k} g_{\Sigma_p}(y) \iff g_{\Sigma_p}(x) \simeq_z g_{\Sigma_p}(y)$.
    If no such $k$ exists, then necessarily $x \not\simeq_z y$ and $g_{\Sigma_p}(x) \not\simeq_z g_{\Sigma_p}(y)$ for all $z \in \mathbb{Z}$.
\end{proof}

\begin{thm}
    An ordered abelian group $G$ with refined valuations is distal.
\end{thm}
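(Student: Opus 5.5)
The plan is to mirror the proof of \autoref{thm:Distality}, replacing \autoref{prop:FunctionRelativeQE} by \autoref{prop:RefinedQE} and \autoref{lem:AuxiliaryDistality} by \autoref{lem:RefinedAuxiliaryDistality}. We may assume $G$ is a monster model. By the imaginary-sorts result and the sufficient criterion from \autoref{sec:GeneralitiesOnDistality}, it suffices to take an indiscernible sequence $(\overline{a}_l)_{l \in I+(c)+J}$ from the main sort and a single $b \in G$ such that $(\overline{a}_l)_{l \in I+J}$ is indiscernible over $b$, and to show $\mathrm{tp}(\overline{a}_c/b) = \mathrm{tp}(\overline{a}_l/b)$ for $l \in I+J$. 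As before, we fix $i_0 \in I$ and $j_0 \in J$, set $I' = I_{>i_0}$ and $J' = J_{<j_0}$, and aim for the analogue of Claim 1 there. Condition (*) is now needed only for $x=0$, $x>0$ and $x =_\bullet k_\bullet$. Condition (**) is needed for every $v \in \{\sigma_{p^r}, \sigma_{p^r,k}, \mathfrak{t}_p\}$, and again we want alternative (A) or (B), with $z \in \{i_0,j_0\}$.

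Given the claim, we form the tuples $\overline{t}_A(\overline{a}_l)$ and $\overline{t}_B(b)$ of values in the $\mathcal{A}^+$-sorts. Here the $\sigma$-values are recorded as their $\Sigma_p$-components together with the finitely many $\mathbb{N}$-labels, which are constant along the sequence. We then apply \autoref{lem:RefinedAuxiliaryDistality} to get an elementary partial map on $\mathcal{A}^+$ sending the $c$-values to the $l$-values while fixing $\overline{t}_B(b)$. Together with $f_G(mb - \overline{m}\cdot\overline{a}_c) = mb - \overline{m}\cdot\overline{a}_l$, this satisfies the hypotheses of \autoref{prop:RefinedQE}. For hypothesis (2) we close the domains under $\pi_p$; this causes no harm because $\pi_p$ is a symbol of $G_{\mathcal{A}^+}$.

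The proof of the claim follows the three cases of \autoref{rmk:ValueOfIndiscernibles} for each valuation $v$. This remark applies to $\sigma_{p^r}$ and $\sigma_{p^r,k}$ because they are valuations, so triangles are isosceles. $\sigma_{p^r,k}$ is really the valuation $\sigma_{p^r}$ applied to $x-k_H$, so we treat it by applying the argument to $\sigma_{p^r}$ together with \autoref{rmk:Sigmaandk}.

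The monotone cases transfer verbatim, using the analogue of \autoref{lem:AddingInfinitesimal} for $\sigma_{p^r}$. That analogue needs: if $\sigma_{p^r}(a) > \sigma_{p^r}(b)$, then $\sigma_{p^r,k}(a+b) = \sigma_{p^r,k}(a)$. This should follow since $\sigma_{p^r}(a) > \sigma_{p^r}(b)$ forces $\mathfrak{s}_{p^r}(a) \geq \mathfrak{s}_{p^r}(b)$ and the same coset behaviour modulo $k_H$.

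The main obstacle is the constant case for $v = \sigma_{p^r}$. Previously we used \autoref{lem:RprFinite}, which relies on finite ribs, to force the value to be $-\infty$. Now every rib with respect to $\sigma_{p^r}$ has size $p$ by \autoref{dfn:RefinedValuationAxioms} (7). If $v(\overline{m}\cdot\overline{a}_{l}-\overline{m}\cdot\overline{a}_{i_0}) = \gamma \neq -\infty$ for all $l$, and all pairwise differences also have value $\gamma$, then these infinitely many elements have distinct images in the rib at $\gamma$. That is impossible, so the constant case again forces $\overline{m}\cdot\overline{a}_l \equiv_{p^r} \overline{m}\cdot\overline{a}_{i_0}$, and hence all the $\sigma_{p^r}$- and $\sigma_{p^r,k}$-values of $mb - \overline{m}\cdot\overline{a}_l$ coincide. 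The constant case for $\mathfrak{t}_p$ and for $=_\bullet k_\bullet$ is handled exactly as in Case 3 of the earlier proof, via the ordered rib $H^+/H$. The remaining point to verify carefully is that the elementary partial map on $\mathcal{A}^+$ respects the pair structure $(s,i)$ of values. I expect this to be routine, because the $s$-coordinates are fixed labels in the tuple.
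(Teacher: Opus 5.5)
Your overall architecture matches the paper's. You reduce to a single $b$, prove an analogue of Claim~1, handle the constant case for $\sigma_{p^r}$ via ribs of size $p$, and transport via Lemma~\ref{lem:RefinedAuxiliaryDistality} and Proposition~\ref{prop:RefinedQE}. The gap is in your treatment of $\sigma_{p^r,k}$.

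First, $\sigma_{p^r,k}$ is not a valuation. With $H=\mathfrak{s}_{p^r}(x)$ and $G/H$ discrete, it equals $-\infty$ whenever $x/H\equiv_{p^r}k_H$, even though $x\notin p^rG$. It is also not invariant under $x\mapsto -x$.

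More seriously, the ``analogue of Lemma~\ref{lem:AddingInfinitesimal}'' on which your monotone cases rest is false. Work in $\mathbb{Z}$, where $\Sigma_p$ is a point and $\sigma_{p^r}(x)=(v_p(x),\cdot)$ for $x\notin p^r\mathbb{Z}$. Take $p^r=4$, $k=1$, $a=1$, $b=2$. Then $\sigma_4(a)=(0,\cdot)>(1,\cdot)=\sigma_4(b)$, but $\sigma_{4,1}(a)=-\infty$ while $\sigma_{4,1}(a+b)=\sigma_4(3-1)=(1,\cdot)$.

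The underlying reason: when $G/H$ is discrete, $\sigma_{p^r,k}(x)$ measures $x-k_H$. So the quantity to compare with the monotone sequence $\sigma_{p^r}(\overline{m}\cdot\overline{a}_l-\overline{m}\cdot\overline{a}_{i_0})$ is $\sigma_{p^r}(mb-\overline{m}\cdot\overline{a}_{i_0}-k_H)$, not $\sigma_{p^r}(mb-\overline{m}\cdot\overline{a}_{i_0})$. Hence ``the monotone cases transfer verbatim'' is unjustified in one subcase: $\mathfrak{s}_{p^r}(mb-\overline{m}\cdot\overline{a}_{i_0})$ equals the common value $H$ of $\mathfrak{s}_{p^r}$ on the differences, and $G/H$ is discrete.

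The paper isolates exactly this subcase. It compares $\sigma_{p^r}(mb-\overline{m}\cdot\overline{a}_{i_0}-k_H)$ with the sequence and obtains either (B) or a third alternative (C): $\sigma_{p^r,k}(mb-\overline{m}\cdot\overline{a}_l)=\sigma_{p^r}(\overline{m}\cdot\overline{a}_z-\overline{m}\cdot\overline{a}_l)$. This is harmless, since those values already occur in $\overline{t}_A$.

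Alternatively, you can keep your (A)/(B) dichotomy by proving the subcase is empty:
\begin{enumerate}
\item If $G/H$ is discrete, $H$ has an immediate successor $H'$ with $H'/H\cong\mathbb{Z}$, so $H^{[p^r]}=H'+p^rG$ and $R_H^{\mathfrak{s}_p}$ has exactly $p$ elements.
\item By (7) of Definition~\ref{dfn:RefinedValuationAxioms}, $\Sigma_{p,H}$ is then a single point.
\item By (6), the $\sigma_{p^r}$-values of elements with $\mathfrak{s}_{p^r}$-value $H$ lie in an $r$-element set.
\item Such a finite set cannot contain a strictly monotone sequence indexed by $I'+(c)+J'$.
\end{enumerate}

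On either route, (A) for $\sigma_{p^r,k}$ must be stated with the sign $\overline{m}\cdot\overline{a}_z-\overline{m}\cdot\overline{a}_l$, as the paper does, not the sign inherited from Theorem~\ref{thm:Distality}. To see why, suppose $\mathfrak{s}_{p^r}(mb-\overline{m}\cdot\overline{a}_{i_0})$ lies below strictly increasing values $H_l=\mathfrak{s}_{p^r}(\overline{m}\cdot\overline{a}_l-\overline{m}\cdot\overline{a}_{i_0})$ with each $G/H_l$ discrete. Then $\sigma_{p^r,k}(mb-\overline{m}\cdot\overline{a}_l)=\sigma_{p^r,k}(\overline{m}\cdot\overline{a}_{i_0}-\overline{m}\cdot\overline{a}_l)$. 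In general this differs from $\sigma_{p^r,k}(\overline{m}\cdot\overline{a}_l-\overline{m}\cdot\overline{a}_{i_0})$; compare $\sigma_{4,1}(1)=-\infty$ with $\sigma_{4,1}(-1)=(1,\cdot)$.
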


\begin{proof}
    We may assume that $G$ is a monster model.
    Fix arbitrary $I,J \models DLO$.
    Let $(\overline{a}_l)_{l \in I+(c)+J}$ be an indiscernible sequence from the main sort $G$, let $b \in G$, and assume that $(\overline{a}_l)_{l \in I+J}$ is indiscernible over $b$.
    Fix some $i_0 \in I$ and $j_0 \in J$, and let $I' = I_{> i_0}$ and $J' = J_{< j_0}$.
    We will establish the following claim:

    \setcounter{clm}{0}
    \begin{clm}
        For any $m \in \mathbb{Z}$ and any tuple of integers $\overline{m}$ with $|\overline{m}| = |\overline{a}_l|$:
        \begin{itemize}
            \item[(*)] $P(mb - \overline{m} \cdot \overline{a}_c) \iff P(mb - \overline{m} \cdot \overline{a}_l)$ for any $l \in I'+J'$, where $P(x)$ is one of the predicates $x = 0$, $x > 0$, or $x =_{\bullet} k_{\bullet}$.
            \item[(**)] For any $v \in \left\{ \sigma_{p^r}, \mathfrak{t}_p \mid p \in \mathbb{P}, r \geq 1 \right\}$, either
            \begin{itemize}
                \item[(A)] $\left( v\left( mb - \overline{m} \cdot \overline{a}_l \right) \right)_{l \in I'+(c)+J'} = \left( v\left( \overline{m} \cdot \overline{a}_z - \overline{m} \cdot \overline{a}_l \right) \right)_{l \in I'+(c)+J'}$, or
                \item[(B)] $\left( v\left( mb - \overline{m} \cdot \overline{a}_l \right) \right)_{l \in I'+(c)+J'} = \left( v\left( mb - \overline{m} \cdot \overline{a}_z \right) \right)_{l \in I'+(c)+J'}$,
            \end{itemize}
            where $z \in \left\{ i_0, j_0 \right\}$.
            (The same for $\mathfrak{s}_{p^r}$ automatically follows.)
            \item [(***)] For any $p \in \mathbb{P}$, $r \geq 1$, and $1 \leq k < p^r$, $\sigma_{p^r,k}$ satisfies either (A), (B), or 
            \begin{itemize}
                \item[(C)] $\left( \sigma_{p^r,k}\left( mb - \overline{m} \cdot \overline{a}_l \right) \right)_{l \in I'+(c)+J'} = \left( \sigma_{p^r}\left( \overline{m} \cdot \overline{a}_z - \overline{m} \cdot \overline{a}_l \right) \right)_{l \in I'+(c)+J'}$,
            \end{itemize}
            where $z \in \left\{ i_0, j_0 \right\}$.
        \end{itemize}
    \end{clm}

    Once this claim is established, the theorem follows by the same argument as in the proof of \autoref{thm:Distality}, using \autoref{prop:RefinedQE} and \autoref{lem:RefinedAuxiliaryDistality} in place of \autoref{prop:FunctionRelativeQE} and \autoref{lem:AuxiliaryDistality}.

    \begin{proof}[Proof of Claim 1] \renewcommand{\qedsymbol}{$\blacksquare_{\mathrm{Claim}}$}
        Fix integers $m$ and $\overline{m}$.
        The proofs for (*) and (**) are the same as in \autoref{thm:Distality}, but an additional work is required to establish (***).

        If $\sigma_{p^r}\left( \overline{m} \cdot \overline{a}_{l_1} - \overline{m} \cdot \overline{a}_{i_0} \right) = \sigma_{p^r}\left( \overline{m} \cdot \overline{a}_{l_2} - \overline{m} \cdot \overline{a}_{i_0} \right) = \sigma_{p^r}\left( \overline{m} \cdot \overline{a}_{l_2} - \overline{m} \cdot \overline{a}_{l_1} \right)$ for any $l_1 < l_2$ in $I' + (c) + J'$, then since every rib has size $p$, we must have $\overline{m} \cdot \overline{a}_l - \overline{m} \cdot \overline{a}_{i_0} \in p^r G$ for any $l \in I'+(c)+J'$.
        This implies (B).

        Suppose that $\left( \sigma_{p^r}\left( \overline{m} \cdot \overline{a}_l - \overline{m} \cdot \overline{a}_{i_0} \right) \right)_{l \in I'+(c)+J'}$ is strictly increasing (the other case is analogous).
        If $\mathfrak{s}_{p^r}\left( mb - \overline{m} \cdot \overline{a}_{i_0} \right) > \left( \mathfrak{s}_{p^r}\left( \overline{m} \cdot \overline{a}_l - \overline{m} \cdot \overline{a}_{i_0} \right) \right)_{l \in I'+(c)+J'}$, then (B) holds;
        if $\mathfrak{s}_{p^r}\left( mb - \overline{m} \cdot \overline{a}_{i_0} \right) < \left( \mathfrak{s}_{p^r}\left( \overline{m} \cdot \overline{a}_l - \overline{m} \cdot \overline{a}_{i_0} \right) \right)_{l \in I'+(c)+J'}$, then (A) holds.
        Hence, we assume $\mathfrak{s}_{p^r}\left( mb - \overline{m} \cdot \overline{a}_{i_0} \right) = \left( \mathfrak{s}_{p^r}\left( \overline{m} \cdot \overline{a}_l - \overline{m} \cdot \overline{a}_{i_0} \right) \right)_{l \in I'+(c)+J'}$, which we denote by $H$, and that $G / H$ is discrete.
        If $\sigma_{p^r}\left( mb - \overline{m} \cdot \overline{a}_{i_0} - k_H \right) > \left( \sigma_{p^r}\left( \overline{m} \cdot \overline{a}_l - \overline{m} \cdot \overline{a}_{i_0} \right) \right)_{l \in I'+(c)+J'}$, then (B) follows.
        If $\sigma_{p^r}\left( mb - \overline{m} \cdot \overline{a}_{i_0} - k_H \right) < \left( \sigma_{p^r}\left( \overline{m} \cdot \overline{a}_l - \overline{m} \cdot \overline{a}_{i_0} \right) \right)_{l \in I'+(c)+J'}$, then (C) follows.
    \end{proof}
    It completes the proof.
\end{proof}

\addcontentsline{toc}{section}{References}
\begin{sloppypar}
    \printbibliography
\end{sloppypar}

\end{document}